\renewcommand*{\eqref}[1]{%
  \hyperref[{#1}]{\textup{\tagform@{\ref*{#1}}}}%
}
\theoremstyle{plain}
\newtheorem{theorem}{Theorem}[section]
\newtheorem{lemma}[theorem]{Lemma}
\newtheorem{corollary}[theorem]{Corollary}
\newtheorem{proposition}[theorem]{Proposition}
\theoremstyle{definition}
\newtheorem{remark}[theorem]{Remark}
\newtheorem{definition}[theorem]{Definition}
\numberwithin{equation}{section}
\def\esssup{\operatornamewithlimits{ess\,\sup}}
\DeclareMathOperator{\sgn}{sgn}
\def\L1loc{L^1_{\text{loc}}}
\begin{document}
\title{Lorentz--Karamata spaces}
\author{Dalimil Pe{\v s}a}

\address{Dalimil Pe{\v s}a, Department of Mathematical Analysis, Faculty of Mathematics and
Physics, Charles University, Sokolovsk\'a~83,
186~75 Praha~8, Czech Republic}
\email{pesa@karlin.mff.cuni.cz}
\urladdr{0000-0001-6638-0913}

\subjclass[2010]{46E30}
\keywords{Lorentz--Karamata spaces, slowly varying functions, embedding theorems, associate spaces, rearrangement-invariant Banach function norms, quasinormed spaces}

\thanks{This research was supported by the grants no.~P201-18-00580S, P201/21-01976S, and P202/23-04720S of the Czech Science Foundation; the Primus research programme PRIMUS/21/SCI/002 of Charles University; the grant SFG205 of Faculty of Mathematics and Physics, Charles University, Prague; Charles University Research program No. UNCE/SCI/023; and by the grant SVV-2023-260711.}

\begin{abstract}
In this paper, we consider Lorentz--Karamata spaces with slowly varying functions and provide a comprehensive study of their properties.

We consider Lorentz--Karamata functionals over an arbitrary sigma-finite measure space equipped with a non-atomic measure and the corresponding Lorentz--Karamata spaces. We characterise non-triviality of said spaces, then study when they are equivalent to a Banach function space and obtain a complete characterisation. We compute the fundamental function of said spaces and describe the corresponding endpoint spaces. We further provide a complete characterisation of when the Lorentz--Karamata spaces defined using non-increasing rearrangement are equivalent to those defined using maximal function. We provide a complete description of the associate spaces of Lorentz--Karamata spaces. We also treat other topics like embeddings, absolute continuity of the (quasi)norm, and Boyd indices.
\end{abstract}

\date{\today}

\maketitle

\makeatletter
   \providecommand\@dotsep{2}
\makeatother

\section{Introduction}

In this paper we focus on one particular scale of function spaces, called Lorentz--Karamata spaces. These function spaces were introduced in 2000 by Edmunds, Kerman and Pick in~\cite{EdmundsKerman00}, and their name reflects the fact that their construction encapsules both the Lorentz-type structure of fine tuning of function spaces and the concept of the so-called slowly-varying functions that had been studied by Karamata. The original motivation for the introduction of these spaces was connected with the investigation of the very important problem of nailing down optimal partner function spaces in Sobolev embeddings on regular domains in the Euclidean space. During the last two decades the Lorentz--Karamata spaces have extended their field of applications for example to Gauss-Sobolev embeddings  in  \cite{CianchiPick09} and the related interpolation theory in \cite{Baena-MiretGogatishvili22}, boundedness of operators on probability spaces in \cite{CianchiPick15}, and traces of Sobolev functions in \cite{CianchiPick16}. They were also studied in connection with Bessel potential type spaces in \cite{GogatishviliOpic04} and \cite{Neves02}, employed in describing embeddings of Besov-type spaces in \cite{CaetanoGogatishvili11} and \cite{GurkaOpic07}, and their interpolation properties were the focus of \cite{Bathory18}. Their upmost importance seems to consist of the fact that they provide a class of very good and useful examples for various tasks in functional analysis and its applications, which is on one hand quite versatile (note that Lorentz--Karamata spaces contain Lebesgue spaces,  Lorentz spaces, Zygmund classes, Lorentz--Zygmund spaces, a good deal of Orlicz spaces, many Lorentz and Marcinkiewicz endpoint spaces, the space of Br\'ezis and Wainger, and many more), on the other hand they are relatively easily manageable, which makes them extremely handy. 

During the last 20 years, Lorentz--Karamata spaces have been several times put under a detailed scrutiny. For instance, an~alternative characterization by ``norms within norms'' of them is given in~\cite{EdmundsOpic08}. Further characterisation by the means of alternative norms was obtained in \cite{FernandezSignes14}. They are briefly mentioned also in~\cite{FucikKufner13}. A detailed study focused on some of their basic functional properties can be found in \cite{EdmundsEvans04} and \cite{Neves02}. But none of these works provided a completely satisfactory result as they usually contained various restrictions. One of our principal goals is to fill in the gap and provide a comprehensive study of these spaces.

We would like to stress that we work with Lorentz--Karamata spaces in their full generality, using the modern definition of slowly varying functions that was introduced in \cite{GogatishviliOpic04} or \cite{GogatishviliOpic05} (it is unclear which paper is in fact older). Our approach is thus more general than that taken by some other authors, see for example \cite{EdmundsEvans04}, \cite{Ho20}, and \cite{Neves02}, whose definition of slowly varying functions requires that the behaviour near zero is the same as that near infinity. This not only makes the Lorentz--Karamata spaces less general, it also causes the definition to be more complicated. The reason for this approach is probably the way the Lorentz--Karamata spaces were originally introduced in \cite{EdmundsKerman00}, where they were considered only in the case when the underlying measure space was of finite measure, so the question of behaviour near infinity did not arise. The modern approach, as taken in, for example, \cite{Bathory18}, \cite{CaetanoGogatishvili11}, \cite{EdmundsOpic08}, \cite{GogatishviliNeves10}, \cite{GogatishviliOpic04}, \cite{GurkaOpic07}, and \cite{NevesOpic20}, resolves this issue and we thus believe it to be significantly better. However, most of the papers employing the modern definition do not concern themselves with the properties of Lorentz--Karamata spaces, or only do so in a very limited fashion (as in \cite{Bathory18}, where non-triviality and embeddings are characterised), so most of the properties of said spaces are shown in the older papers which use the restrictive definitions. We have thus opted to provide full proofs for all our results to save the reader the tedious work of checking whether the arguments that were used in the restricted case still work in our more general setting.

It is worth noting that even though the Lorentz--Karamata spaces are a fairly general class of function spaces, they are further generalised by the classical Lorentz spaces, which have been studied quite extensively since their introduction by Lorentz in \cite{Lorentz51}. In some cases we use in our proofs this more abstract theory of classical Lorentz spaces, namely the results contained in \cite{CarroPick00}, \cite{CarroSoria96}, \cite{CarroRaposo07}, \cite{CarroSoria93}, \cite{GogatishviliSoudsky14}, \cite{Sawyer90}, and \cite{Stepanov93}, while in other cases we opted for a more elementary approach. As a further reading on the topic we would recommend \cite{ArinoMuckenhoupt90}, \cite{CarroGogatishvili08}, \cite{CarroSoria97}, \cite{GogatishviliKrepela17}, \cite{GogatishviliPick03}, \cite{GogatishviliPick06}, \cite{Lorentz61}, and \cite{Sinnamon02}.

The paper is structured as follows. We first provide the necessary theoretical background in Section~\ref{SP}. We then provide the comprehensive treatment of Lorentz--Karamata spaces in Section~\ref{SMR}.

To be more specific, we first define the Lorentz--Karamata spaces in Section~\ref{SSLK} and prove some of their basic properties. Section~\ref{SecFF} is then devoted to the description of the fundamental functions of Lorentz--Karamata spaces and the corresponding endpoints. In Section~\ref{SBI} we compute the Boyd indices of Lorentz--Karamata spaces. In Section~\ref{SSE}, embeddings between Lorentz--Karamata spaces are completely characterised. The equivalence of Lorentz--Karamata spaces defined using the non-increasing rearrangement and those defined using the maximal function  is studied in Section~\ref{SSQN} where we obtain a complete characterisation. Section~\ref{SecACN} is devoted to characterising which functions in Lorentz--Karamata spaces have absolutely continuous quasinorm. In Section~\ref{SSLKAS} we describe the associate spaces of Lorentz--Karamata spaces for all choices of parameters. Finally, in Section~\ref{SSBFS} we provide a complete characterisation of the cases when a Lorentz--Karamata space is a Banach function space.

We would like to emphasize that we do not assume that the measure of the underlying measure space is finite. On the contrary, we generally assume it to be infinite as those cases are usually more interesting. There are, however, few exceptions to this rule and there we consider also the case of finite measure in order to obtain the most interesting versions of the results.
	
\section{Preliminaries}\label{SP}

The aim of this section is to establish the basic framework for later work. We strive to keep the definitions and notation as standard as possible.

From now on, we will denote by $(R, \mu)$, and sometimes $(S, \nu)$, some arbitrary $\sigma$-finite measure space. When $E \subseteq R$, we will denote its characteristic function by $\chi_E$. The set of all extended complex-valued  $\mu$-measurable functions defined on $R$ will be denoted by $M(R, \mu)$, its subsets of all non-negative functions and of all functions finite $\mu$-almost everywhere on $R$ will be denoted by $M_+(R, \mu)$ and $M_0(R, \mu)$ respectively. As usual, we identify functions that are equal $\mu$-almost everywhere. We will usually abbreviate $\mu$-almost everywhere to $\mu$-a.e.\ and simply write $M$, $M_+$ and $M_0$, instead of $M(R, \mu)$, $M_+(R, \mu)$ and $M_0(R, \mu)$ respectively, whenever there is no risk of confusion. In some special cases when $R = \mathbb{R}^n$ we will denote the $n$-dimensional Lebesgue measure by $\lambda^n$ (or just $\lambda$ when $n=1$).

When $X$ is a set and $f, g: X \to \mathbb{C}$ are two maps satisfying that there is some positive and finite constant $C$, depending only on $f$ and $g$, such that $\lvert f(x) \rvert \leq C \lvert g(x) \rvert$ for all $x \in X$, we will denote this by $f \lesssim g$. We will also write $f \approx g$, or sometimes say that $f$ and $g$ are equivalent, whenever both $f \lesssim g$ and $g \lesssim f$ are true at the same time. We choose this general definition because we will use the symbols ``$\lesssim$'' and ``$\approx$'' with both functions and functionals.

When $X$ and $Y$ are quasinormed linear spaces, we will use the notation $X \hookrightarrow Y$ to mean that $X \subseteq Y$ and that the identity operator is bounded from $X$ to $Y$, i.e.~that it holds for every $x \in X$ that
\begin{equation*}
	\lVert x \rVert_Y \lesssim \lVert x \rVert_X,
\end{equation*}
where $\lVert \cdot \rVert_X$ and $\lVert \cdot \rVert_Y$ are the respective quasinorms in $X$ and $Y$. When $X \hookrightarrow Y$ and $Y \hookrightarrow X$ hold at the same time, we will say that $X = Y$ up to equivalence of quasinorms. In case when the functionals $\lVert \cdot \rVert_X$ and $\lVert \cdot \rVert_Y$ are norms, we will say that the equality holds up to equivalence of norms. When the type of functional is not clear (e.g.~when one is a norm and the other is not, when it depends on a particular choice of parameters, etc.) we might say that the equality holds up to equivalence of defining functionals, in order to avoid making any implicit statement about the triangle inequality of the functionals in question.

Finally, when $q \in (0, \infty]$, we will denote by $L^q$ the classical Lebesgue space (of functions in $M(R, \mu)$) defined by
\begin{equation*}
L^q = \left \{ f \in M(R, \mu); \; \int_R \lvert f \rvert^q \: d\mu < \infty \right \},
\end{equation*}
equipped with the customary (quasi-)norm
\begin{equation*}
\lVert f \rVert_q = \left ( \int_R \lvert f \rvert^q \: d\mu \right )^{\frac{1}{q} },
\end{equation*}
with the usual modifications when $q=\infty$.

\subsection{Non-increasing rearrangement} \label{SSNR}

In this section, we define the non-increasing rearrangement of a function and some related terms. We proceed in accordance with \cite[Chapter~2]{BennettSharpley88}.

The non-increasing rearrangement of a function $f \in M$, traditionally denoted $f^*$, is defined as the generalised inverse of the distribution function, that is for any $t \in [0, \infty)$
\begin{equation*}
	f^*(t) = \inf \{ s \in [0, \infty); \; \mu_f(s) \leq t \},
\end{equation*}
where the distribution function $\mu_f$ of a function $f \in M$ is defined for $s \in [0, \infty)$ by
\begin{equation*}
	\mu_f(s) = \mu(\{ t \in R; \; \lvert f(t) \rvert > s \}).
\end{equation*}

Some basic properties of the distribution function and the non-increasing rearrangement, with proofs, can be found in \cite[Chapter~2, Proposition~1.3]{BennettSharpley88} and \cite[Chapter~2, Proposition~1.7]{BennettSharpley88}.

A very important classical result is the Hardy-Littlewood inequality which asserts that it holds for all $f, g \in M$ that
\begin{equation}
	\int_R \lvert f g \rvert \: d\mu \leq \int_0^{\infty} f^* g^* \: d\lambda. \label{THLI}
\end{equation}
For details, see for example \cite[Chapter~2, Theorem~2.2]{BennettSharpley88}. As an immediate consequence, we get that, for all $f, g \in M$, 
\begin{equation}
	\sup_{\substack{\tilde{g} \in M \\ \tilde{g}^* = g^*}} \int_R \lvert f \tilde{g} \rvert \: d\mu \leq \int_0^{\infty} f^* g^* \: d\lambda. \label{DR}
\end{equation}
This leads to the definition of resonant measure spaces, as those spaces where we have equality in \eqref{DR}. It has been proven that in order for a measure space to be resonant, it suffices for its measure to be non-atomic. For details, see \cite[Chapter~2, Theorem~2.7]{BennettSharpley88}.

We will also need in the paper the so called maximal function of $f^*$ (where $f \in M$), which is traditionally denoted $f^{**}$ and is defined for $t \in [0, \infty)$ by 
\begin{equation*}
	f^{**}(t) = \frac{1}{t} \int_0^{t} f^*(s) \: ds.
\end{equation*} 
Some properties of the maximal function can be found in \cite[Chapter~2, Proposition~3.2]{BennettSharpley88} and  \cite[Chapter~2, Theorem~3.4]{BennettSharpley88}.

\subsection{Function norms and quasinorms} \label{SSFN}

The following two definitions are adapted from \cite[Chapter~1, Definition~1.1]{BennettSharpley88} and \cite[Chapter~2, Definition~4.1]{BennettSharpley88}, respectively.

\begin{definition}
	Let $\lVert \cdot \rVert : M \to [0, \infty]$ be some non-negative functional such that it holds for all $f \in M$ that $\lVert f \rVert = \lVert \; \lvert f \rvert \; \rVert$. We then say that $\lVert \cdot \rVert$ is a Banach function norm if its restriction on $M_+$ satisfies the following conditions:
	
	\begin{enumerate}[label=\textup{(P\arabic*)}, series=P]
		\item \label{P1} $\lVert \cdot \rVert$ is a norm, i.e.
		\begin{enumerate}
			\item it is positively homogeneous, i.e.\ $\forall a \in \mathbb{C} \forall f \in M_+ \: : \: \lVert a \cdot f \rVert = \lvert a \rvert \lVert f \rVert$,
			\item it satisfies $\lVert f \rVert = 0 \Leftrightarrow f = 0$  $\mu$-a.e.,
			\item it is subadditive, i.e.\ $\forall f,g \in M_+ \: : \: \lVert f+g \rVert \leq \lVert f \rVert + \lVert g \rVert$.
		\end{enumerate}
		\item \label{P2} $\lVert \cdot \rVert$ has the lattice property, i.e.\ if some $f, g \in M_+$ satisfy $f \leq g$ $\mu$-a.e., then also $\lVert f \rVert \leq \lVert g \rVert$.
		\item \label{P3} $\lVert \cdot \rVert$ has the Fatou property, i.e.\ if  some $f_n, f \in M_+$ satisfy $f_n \uparrow f$ $\mu$-a.e., then also $\lVert f_n \rVert \uparrow \lVert f \rVert $.
		\item \label{P4} $\lVert \chi_E \rVert < \infty$ for all $E \subseteq R$ satisfying $\mu(E) < \infty$.
		\item \label{P5} For every $E \subseteq R$ satisfying $\mu(E) < \infty$ there exists some finite constant $C_E$, dependent only on $E$, such that for all $f \in M_+$ the inequality $ \int_E f \: d\mu \leq C_E \lVert f \rVert $ holds.
	\end{enumerate} 
\end{definition}

We will be interested in particular in the class of rearrangement invariant Banach function norms.

\begin{definition}
	We say that a Banach function norm $\lVert \cdot \rVert$ is rearrangement invariant, abbreviated r.i., if  it satisfies the following additional condition:
	\begin{enumerate}[resume*=P]
		\item \label{P6} If two functions $f,g \in M_+$ satisfy $f^* = g^*$ then $\lVert f \rVert = \lVert g \rVert$.
	\end{enumerate}
\end{definition}

For the properties of Banach function norms and r.i.~Banach function norms see \cite[Chapter~1]{BennettSharpley88} and \cite[Chapter~2]{BennettSharpley88}, respectively.

It will be also useful to define a somewhat weaker version of r.i.\ Banach function norms, namely the rearrangement-invariant quasi-Banach function norms. 

\begin{definition} \label{Q}
	Let $\lVert \cdot \rVert : M \to [0, \infty]$ be some non-negative functional such that it holds for all $f \in M$ that $\lVert f \rVert = \lVert \; \lvert f \rvert \; \rVert$. We then say that $\lVert \cdot \rVert$ is a quasi-Banach function norm, if it satisfies the axioms \ref{P2}, \ref{P3}, and \ref{P4} from the definition of r.i.\ Banach function norm and also a weaker version of \ref{P1}, namely
	\begin{enumerate}[label=\textup{(Q\arabic*)}]
		\item \label{Q1} $\lVert \cdot \rVert$ is a quasinorm, i.e.
		\begin{enumerate}
			\item it is positively homogeneous, i.e.\ $\forall a \in \mathbb{R} \forall f \in M_+ \: : \: \lVert a \cdot f \rVert = \lvert a \rvert \lVert f \rVert$,
			\item it satisfies  $\lVert f \rVert = 0 \Leftrightarrow f = 0$ $\mu$-a.e.,
			\item it is subadditive up to a constant, i.e.\ there is some finite constant $C$ such that $\forall f,g \in M_+ \: : \: \lVert f+g \rVert \leq C(\lVert f \rVert + \lVert g \rVert)$.
		\end{enumerate}
	\end{enumerate}
\end{definition}

\begin{definition}
	We say that a quasi-Banach function norm $\lVert \cdot \rVert$ is rearrangement-invariant, abbreviated r.i., if it satisfies \ref{P6}.
\end{definition}

Our definition of quasi-Banach function norms is standard, the same as for example in \cite{CaetanoGogatishvili16}, \cite{NekvindaPesa20}, and \cite{Lopez08} (note that the requirement of completeness in \cite{Lopez08} is redundant as follows from \cite[Lemma~3.6]{CaetanoGogatishvili16}). The paper \cite{NekvindaPesa20} treats the properties of (r.i.) quasi-Banach function norms in some detail.

\begin{definition}
	Let $\lVert \cdot \rVert$ be a Banach function norm on $M$. Then the set 
	\begin{equation*}
	X = \{ f \in M; \; \lVert f \rVert < \infty \}
	\end{equation*} 
	equipped with the norm $\lVert \cdot \rVert$ will be called a Banach function space. Further, if $\lVert \cdot \rVert$ is rearrangement invariant, we shall say that $X$ is a rearrangement invariant Banach function space.
	
	If $\lVert \cdot \rVert$ is an (r.i.)\ quasi-Banach function norm, we define an (r.i.)\ quasi-Banach function space in exactly the same manner.
\end{definition}

To provide an example, let us note that the classical Lebesgue functional  $\lVert \cdot \rVert_q$ is an r.i.\ Banach function norm on $M$ for $q \in [1, \infty]$ and an r.i.\ quasi-Banach function norm on $M$ for $q \in (0,\infty]$, as follows from the formula 
\begin{equation} \label{FormulaLebesgue}
	\lVert f \rVert_q = \left(  \int_0^{\infty} (f^*)^q \: d\lambda \right)^{\frac{1}{q}}
\end{equation}
(and its obvious modification for the case $q=\infty$) that holds for every $f \in M$ (see \cite[Chapter~2, Proposition~1.8]{BennettSharpley88}).

Another examples are some cases of the classical Lorentz spaces $\Lambda^q(v)$ and $\Gamma^q(v)$, where $q \in (0, \infty)$ and $v : (0, \infty) \to [0, \infty) $ is a non-negative $\lambda$-measurable weight, which are defined as the sets of functions $f \in M$ for which the following respective functionals are finite:
\begin{align*}
	\lVert f \rVert_{\Lambda^q(v)} &= \left( \int_0^{\infty} (f^*)^q v \: d\lambda \right) ^{\frac{1}{q}}, \\
	\lVert f \rVert_{\Gamma^q(v)} &= \left( \int_0^{\infty} (f^{**})^q v \: d\lambda \right) ^{\frac{1}{q}}.
\end{align*}
We would like to stress that there are many choices of $q$ and $v$ for which those spaces even fail to be linear, so some additional assumption are certainly necessary in order for those spaces to serve as examples of r.i.~(quasi-)Banach function spaces. We do not wish to go too deep into the rich and complex theory of Classical Lorentz spaces, so let us just say that $\lVert f \rVert_{\Lambda^q(v)}$ is an r.i.~Banach function norm provided that $q \geq 1$ and that $v$ is non-increasing, non-zero, and locally integrable, while $\lVert f \rVert_{\Gamma^q(v)}$ is an r.i.~Banach function norm provided that $q>1$ and $v$ is strictly positive, locally integrable, and bounded at some neighbourhood of infinity. We emphasise that those are only some simple sufficient conditions, as the actual characterisation is more complex. To provide some references, we turn to the works already mentioned in the introduction, i.e.~\cite{ArinoMuckenhoupt90}, \cite{CarroGogatishvili08}, \cite{CarroPick00}, \cite{CarroSoria96}, \cite{CarroRaposo07}, \cite{CarroSoria93}, \cite{CarroSoria97}, \cite{GogatishviliKrepela17}, \cite{GogatishviliPick03}, \cite{GogatishviliPick06}, \cite{GogatishviliSoudsky14}, \cite{Lorentz61}, \cite{Sawyer90}, \cite{Sinnamon02}, and \cite{Stepanov93}.

An important concept in the theory of quasi-Banach function spaces is the absolute continuity of the quasinorm.

\begin{definition}
	Let $\lVert \cdot \rVert_X$ be a quasi-Banach function norm and let $X$ be the corresponding quasi-Banach function space. We say that a function $f \in X$ has absolutely continuous quasinorm if it holds that $\lVert f \chi_{E_k} \rVert_X \to 0$ (as $k \to \infty$) whenever $E_k$ is a sequence of $\mu$-measurable subsets of $R$ such that $\chi_{E_k} \to 0$ $\mu$-a.e.~(as $k \to \infty$).
	
	The set of all $f \in X$ that have absolutely continuous quasinorms will be denoted by $X_a$. If every $f \in X$ has absolutely continuous quasinorm (i.e.~if $X_a = X$) we further say that the space $X$ itself has absolutely continuous quasinorm.
\end{definition}

This concept is important, because it is deeply connected to separability and also reflexivity of Banach function spaces, for details see \cite[Chapter~1, Sections~3, 4 and 5]{BennettSharpley88}.

To provide some examples, the Lebesgue spaces $L^q$ have absolutely continuous quasinorms for $q < \infty$, while $L^{\infty}_a = \{0\}$.

We will need the following characterisation that describes the functions having absolutely continuous quasinorms. The result is well known for the case of Banach function spaces (see e.g.~\cite[Chapter~1, Proposition~3.5]{BennettSharpley88}), but almost exactly the same proof can be used to extend it to the context of quasi-Banach function spaces.

\begin{proposition} \label{PropACN}
	Let $\lVert \cdot \rVert_X$ be a quasi-Banach function norm and let $X$ be the corresponding quasi-Banach function space. Then $f \in X$ has absolutely continuous quasinorm if and only if every sequence $f_n$ of $\mu$-measurable functions satisfying $\lvert f \rvert \geq \lvert f_n \rvert \downarrow 0$ $\mu$-a.e.~(as $n \to \infty$) satisfies $\lVert f_n \rVert_X \downarrow 0$ (as $n \to \infty$).
\end{proposition}

\subsection{Associate spaces} \label{SSAS}
In this section we define the associate functionals and consequently the associate spaces. We intentionally make the definitions very broad and general. For details on associate spaces of Banach function spaces, see \cite[Chapter~1, Section~2, 3 and 4]{BennettSharpley88}. We will approach the matter in a more general way.

When talking about associate spaces, it is useful to use the notation $q'$ for the number defined by
\begin{equation*}
q' = 
\begin{cases}
\frac{q}{q-1} & \text{for } q \in (1, \infty), \\
1  & \text{for } q = \infty, \\
\infty  & \text{for } q = 1,
\end{cases}
\end{equation*} 
which we will do from now on.

\begin{definition} \label{DAS}
	Let $\lVert \cdot \rVert: M \to [0, \infty]$ be some non-negative functional and put
	\begin{equation*}
	X = \{ f \in M; \; \lVert f \rVert < \infty \}.
	\end{equation*} 
	Then the set 
	\begin{equation*}
	X' = \left \{ f \in M; \; \sup_{\substack{g \in M \\ \lVert g \rVert \leq 1}} \int_R \lvert f g \rvert \: d\mu < \infty \right \}
	\end{equation*}
	will be called the associate space of $X$ and the functional $\lVert \cdot \rVert_{X'}$ defined for $f \in M$ by 
	\begin{equation*}
	\lVert f \rVert_{X'} = \sup_{\substack{g \in M \\ \lVert g \rVert \leq 1}} \int_R \lvert f g \rvert \: d\mu
	\end{equation*}
	will be called the associate functional of $\lVert \cdot \rVert$.
\end{definition}

As the notation suggests, these terms are interesting mainly when the functional $\lVert \cdot \rVert$ is at least a quasinorm, but we wanted to emphasize that the definition itself lays no requirements on it.

The point of the associate space is to provide a general form of H{\"o}lder inequality, namely that it holds for all $f, g \in M$ that
\begin{equation}
	\int_R \lvert f  g \rvert \: d\mu \leq \lVert g \rVert \lVert f \rVert_{X'}, \label{THAS}
\end{equation}
where $\lVert \cdot \rVert: M \to [0, \infty]$ is assumed to be some positively homogeneous functional and in the case when the right-hand side is of the form $0 \cdot \infty$ it is to be interpreted as $\infty$.

To provide an example, if we take $\lVert \cdot \rVert_q$, $q \in [1, \infty]$, as our $\lVert \cdot \rVert$, we obtain that $\lVert \cdot \rVert_{X'} = \lVert \cdot \rVert_{q'}$. On the other hand, if we take $\lVert \cdot \rVert_q$, $q \in (0,1)$, we get $\lVert f \rVert_{X'} = \infty$ for all $f$ other than zero (and of course $\lVert 0 \rVert_{X'} = 0$) and thus $X' = \{0\}$.

It is quite obvious from Definition~\ref{DAS} that if two positively homogeneous functionals $\lVert \cdot \rVert_X$ and $\lVert \cdot \rVert_Y$ satisfy $\lVert \cdot \rVert_X \lesssim \lVert \cdot \rVert_Y$ then also $\lVert \cdot \rVert_{Y'} \lesssim \lVert \cdot \rVert_{X'}$. Equivalently, we may say that if $X \hookrightarrow Y$ then $Y' \hookrightarrow X'$, where the continuity  of the embedding is understood relatively to the defining functionals, i.e.\ it means that the identity operator is bounded.

It has been proven, see for example \cite[Chapter~1, Theorem~2.2]{BennettSharpley88} and \cite[Chapter~1, Theorem~2.7]{BennettSharpley88}, that the associate functional of a Banach function norm $\lVert \cdot \rVert$ is itself a Banach function norm, and that its associate functional is $\lVert \cdot \rVert$. This result has been improved recently by Gogatishvili and Soudsk{\'y} in \cite{GogatishviliSoudsky14}. Since we will use this result later in the paper, we present it below. The symbol $\lVert \cdot \rVert_{X''}$ denotes the second associate functional, i.e.~the associate functional of $\lVert \cdot \rVert_{X'}$.

\begin{theorem} \label{TFA}
	Let $\lVert \cdot \rVert : M \to [0, \infty]$ be a functional that satisfies \ref{P4} and \ref{P5} and which also satisfies for all $f \in M$ that $\lVert f \rVert$ = $\lVert \: \lvert f \rvert \: \rVert$. Then the functional $\lVert \cdot \rVert_{X'}$ is a Banach function norm. In addition, $\lVert \cdot \rVert$ is equivalent to a Banach function norm if and only if $\lVert \cdot \rVert \approx \lVert \cdot \rVert_{X''}$
	
\end{theorem}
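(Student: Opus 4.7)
The plan is to establish the statement in two pieces: first, to verify that $\lVert \cdot \rVert_{X'}$ satisfies the five axioms \ref{P1}--\ref{P5} of a Banach function norm; and second, to deduce the stated equivalence using Part~1 together with the classical Lorentz--Luxemburg identity $\rho = \rho''$ for any Banach function norm $\rho$ (already available from \cite[Chapter~1, Theorem~2.7]{BennettSharpley88}). The condition $\lVert f \rVert_{X'} = \lVert \lvert f \rvert \rVert_{X'}$ is built into the supremum definition because the integrand is $\lvert fg \rvert$.

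For Part~1, I would verify each axiom in turn on $M_+$. Positive homogeneity and subadditivity follow directly from the supremum definition once one notes $\lvert(f_1+f_2)g\rvert \leq \lvert f_1 g\rvert + \lvert f_2 g\rvert$; the lattice property \ref{P2} is immediate from monotonicity of the integral. For the definiteness part of \ref{P1}, I would use the $\sigma$-finiteness of $(R,\mu)$ to write $R = \bigcup_n E_n$ with $\mu(E_n) < \infty$; assumption \ref{P4} on $\lVert \cdot \rVert$ then guarantees $\lVert \chi_{E_n} \rVert < \infty$, so the functions $g_n := \chi_{E_n}/(1+\lVert \chi_{E_n}\rVert)$ are admissible test functions, and $\lVert f \rVert_{X'}=0$ forces $\int_{E_n} \lvert f\rvert\,d\mu=0$ for every $n$, hence $f=0$ $\mu$-a.e. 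Property \ref{P4} for $\lVert \cdot \rVert_{X'}$ is literally the content of \ref{P5} for $\lVert \cdot \rVert$; conversely, \ref{P5} for $\lVert \cdot \rVert_{X'}$ follows by testing against the same function $g_E := \chi_E/(1+\lVert\chi_E\rVert)$ (with the degenerate case $\lVert \chi_E\rVert = 0$ handled separately via a scaling argument showing $\int_E\lvert f\rvert\,d\mu$ vanishes whenever $\lVert f\rVert_{X'}$ is finite). The Fatou property \ref{P3} requires slightly more care: for $f_n\uparrow f$ in $M_+$ and any admissible $g$, monotone convergence yields $\int_R\lvert f_n g\rvert\,d\mu\uparrow\int_R\lvert fg\rvert\,d\mu$, and then exchanging the two suprema (legal because the inner one is monotone in $n$) gives
\[
\lVert f \rVert_{X'} \;=\; \sup_g \sup_n \int_R \lvert f_n g\rvert\,d\mu \;=\; \sup_n \sup_g \int_R \lvert f_n g\rvert\,d\mu \;=\; \sup_n \lVert f_n\rVert_{X'}.
\]

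For Part~2, the ``if'' direction is immediate: once Part~1 is available, applying it to $\lVert \cdot \rVert_{X'}$ (which itself satisfies \ref{P4} and \ref{P5} by virtue of being a Banach function norm) yields that $\lVert \cdot \rVert_{X''}$ is a Banach function norm, so $\lVert \cdot \rVert \approx \lVert \cdot \rVert_{X''}$ exhibits the desired equivalence. For the ``only if'' direction, suppose $\lVert \cdot \rVert \approx \rho$ for some Banach function norm $\rho$. Directly from Definition~\ref{DAS} one sees that the relation $\approx$ between positively homogeneous functionals is preserved under passage to the associate, whence $\lVert \cdot \rVert_{X'} \approx \rho'$, and applying this once more gives $\lVert \cdot \rVert_{X''} \approx \rho''$. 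Since $\rho$ is a Banach function norm, the Lorentz--Luxemburg identity $\rho'' = \rho$ applies, yielding $\lVert \cdot \rVert_{X''} \approx \rho \approx \lVert \cdot \rVert$, as required.

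The main obstacle I anticipate is the definiteness clause of \ref{P1} together with the axiom \ref{P5} for the associate functional, both of which rest delicately on the hypotheses \ref{P4} and \ref{P5} imposed on $\lVert \cdot \rVert$ (as opposed to the textbook proof, which typically presupposes the full Banach function norm structure on the original functional); the remaining verifications are routine manipulations of the defining supremum.
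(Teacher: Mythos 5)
The paper itself offers no proof of Theorem~\ref{TFA} to compare against: the result is quoted from \cite{GogatishviliSoudsky14} (as an improvement of \cite[Chapter~1, Theorems~2.2 and~2.7]{BennettSharpley88}). Your writeup supplies the standard argument, and the individual verifications --- homogeneity and subadditivity of $\lVert\cdot\rVert_{X'}$, the lattice property, the Fatou property via monotone convergence and exchange of suprema, \ref{P4} for $\lVert\cdot\rVert_{X'}$ from \ref{P5} for $\lVert\cdot\rVert$, and the reduction of the second assertion to the Lorentz--Luxemburg identity $\rho''=\rho$ --- are all correct as far as they go.

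There is, however, one genuine issue: several of your steps silently use the positive homogeneity of $\lVert\cdot\rVert$, which is not among the stated hypotheses. Concretely, the admissibility of $g_n=\chi_{E_n}/(1+\lVert\chi_{E_n}\rVert)$ in the definiteness argument and of $g_E=\chi_E/(1+\lVert\chi_E\rVert)$ in the verification of \ref{P5} for $\lVert\cdot\rVert_{X'}$ requires $\lVert\lambda\chi_E\rVert=\lambda\lVert\chi_E\rVert$, and the ``only if'' direction uses that $\lVert\cdot\rVert_X\lesssim\lVert\cdot\rVert_Y$ implies $\lVert\cdot\rVert_{Y'}\lesssim\lVert\cdot\rVert_{X'}$, a monotonicity which the paper itself asserts only for positively homogeneous functionals. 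This is not something you could have argued around, because the statement as printed is false without homogeneity: the functional defined by $\lVert f\rVert=\int_R\lvert f\rvert\,d\mu+1$ for $f\neq 0$ and $\lVert 0\rVert=0$ satisfies \ref{P4}, \ref{P5} and $\lVert f\rVert=\lVert\,\lvert f\rvert\,\rVert$, yet its only test function with $\lVert g\rVert\leq 1$ is $g=0$, so $\lVert\cdot\rVert_{X'}\equiv 0$, which is not a Banach function norm. The omission is harmless for every application made in this paper, since the Lorentz--Karamata functionals are positively homogeneous, but you should state explicitly that you are invoking homogeneity (and, ideally, flag that the theorem needs it as a hypothesis); with that hypothesis restored your proof is complete.
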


\subsection{Boyd indices}

A useful concept from the theory of interpolation is that of Boyd indices, which were introduced by Boyd in \cite{Boyd69}. We do not wish to go deep into the related theory at this point; we only want to give the necessary background for our computations in Section~\ref{SBI}. To the readers interested in this topic we would recommend \cite[Chapter~3, Section~5]{BennettSharpley88}, \cite[Sections~8 and 9]{JohnsonMaurey79}, \cite[Chapter~2]{KreinPetunin82}, and \cite{Maligranda85}.

Let $\lVert \cdot \rVert_X$ be an r.i.~quasi-Banach function norm over $M(R, \mu)$ such that there is some r.i.~quasi-Banach function norm $\lVert \cdot \rVert_{\bar{X}}$ over $M([0, \infty), \lambda)$ such that it holds for every $f \in M(R, \mu)$ that $\lVert f \rVert_X = \lVert f^* \rVert_{\bar{X}}$. Denote by $X$ and $\bar{X}$ the respective corresponding r.i.~quasi-Banach function spaces. The quasinorm $\lVert \cdot \rVert_{\bar{X}}$ is the called the representation functional of $\lVert \cdot \rVert_X$ and, similarly, $\bar{X}$ is called the representation space of $X$.

It is well known that a representation functional exists for every r.i.~Banach function space and that in this case it is also an r.i.~Banach function norm. As for the r.i.~quasi-Banach function spaces in general, we were not able to find any such result in the literature. However, in many concrete cases the representation functional is available directly from the definition. This for example holds for Lebesgue spaces, as follows from \eqref{FormulaLebesgue}, and classical Lorentz spaces.

Consider now the family of dilation operators $D_t : M([0, \infty), \lambda) \to  M([0, \infty), \lambda)$ that are defined for $t \in (0, \infty)$ by
\begin{align*}
	D_t f(s) &= f(ts) &\textup{for } s \in [0, \infty).
\end{align*}
It has been proven in \cite[Theorem~3.22]{NekvindaPesa20} that such an operator is, for every choice of $t \in (0, \infty)$, bounded on any r.i.~quasi-Banach function space. The following definition thus makes sense:
\begin{definition}
	Let $\lVert \cdot \rVert_X$ be an r.i.~quasi-Banach function norm over $M(R, \mu)$ for which the representation functional exists and denote by $h_X(t)$ the operator norm of $D_{t^{-1}} : \bar{X} \to \bar{X}$. We then define the lower and upper Boyd indices, denoted by $\underline{\alpha}_{X}$ and  $\overline{\alpha}_{X}$, respectively, as
	\begin{align*}
		\underline{\alpha}_{X} &= \sup_{t \in (0,1)} \frac{\log(h_X(t))}{\log(t)}, &\overline{\alpha}_{X} = \inf_{t \in (1, \infty)} \frac{\log(h_X(t))}{\log(t)}.
	\end{align*}
\end{definition}

The following result is very useful in practical applications, since suprema and infima of functions are, in general, difficult to compute:

\begin{proposition} \label{PBI}
	Let $\lVert \cdot \rVert_X$ be an r.i.~quasi-Banach function norm over $M(R, \mu)$ for which the representation functional exists and denote by $h_X(t)$ the operator norm of $D_{t^{-1}} : \bar{X} \to \bar{X}$. Then the lower and upper Boyd indices of $X$ satisfy
	\begin{align*}
	\underline{\alpha}_{X} &= \lim_{t \to 0_+} \frac{\log(h_X(t))}{\log(t)}, &\overline{\alpha}_{X} = \lim_{t \to \infty} \frac{\log(h_X(t))}{\log(t)}.
	\end{align*}
\end{proposition}

This result is classical in the case when $\lVert \cdot \rVert_X$ is an r.i.~Banach function norm, see \cite[Chapter~3, Proposition~5.13]{BennettSharpley88} and \cite[Lemma~3]{Boyd69}. On the other hand, we were unable to find in the literature any proofs of the general form needed for our purposes. We therefore present a proof here, which is, similarly to the classical case, based on the elementary properties of subadditive functions. For the sake of completeness we present here a proof of the necessary properties that is an extension of the ideas contained in \cite[Chapter~3, Lemma~5.8]{BennettSharpley88}. We note that a similar result appears in \cite[Theorem~7.6.2]{HillePhillips57}, with the difference being that the authors work with weaker assumptions and thus arrive at a weaker conclusion.

\begin{lemma} \label{LSAF}
	Let $\omega : \mathbb{R} \to \mathbb{R}$ be a non-decreasing subadditive function (i.e.~it holds for every $s,t \in \mathbb{R}$ that $\omega(s+t) \leq \omega(s)+\omega(t)$) such that $\omega(0) = 0$. Then it holds for every $s \in \mathbb{R}$ that
	\begin{equation} \label{LSAF1}
		-\omega(-s) \leq \omega(s).
	\end{equation} 
	Furthermore, we have that
	\begin{equation} \label{LSAF2}
		0 \leq \sup_{s \in (-\infty, 0) } \frac{\omega(s)}{s} = \lim_{s \to -\infty} \frac{\omega(s)}{s} \leq \lim_{s \to \infty} \frac{\omega(s)}{s} = \inf_{s \in (0, \infty)} \frac{\omega(s)}{s} < \infty.
	\end{equation}
\end{lemma}

\begin{proof}
	The estimate \eqref{LSAF1} follows directly from our assumptions, since they guarantee that
	\begin{align*}
		0 &= \omega(0) \leq \omega(-s) + \omega(s) &\textup{for } s \in \mathbb{R}.
	\end{align*}
	Furthermore, it is clear that
	\begin{equation*}
		 \inf_{s \in (0, \infty)} \frac{\omega(s)}{s} \leq \omega(1) < \infty
	\end{equation*}
	and that
	\begin{equation*}
		\sup_{s \in (-\infty, 0) } \frac{\omega(s)}{s} \geq -\omega(-1) \geq 0.
	\end{equation*}
	
	Let us now fix some $\varepsilon > 0$, some $t \in (0, \infty)$, and some $n_0 \in \mathbb{N}$ such that
	\begin{equation*}
		\left (1 + \frac{1}{n_0} \right) \frac{\omega(t)}{t} < \inf_{s \in (0, \infty)} \frac{\omega(s)}{s} + \varepsilon.
	\end{equation*}
	Then for any $s \geq n_0t$ there is some $n \geq n_0$ such that $s \in [nt, (n+1)t)$ and thus we may estimate
	\begin{equation*}
		\frac{\omega(s)}{s} \leq \frac{(n+1)\omega(t)}{nt} \leq \left (1 + \frac{1}{n_0} \right) \frac{\omega(t)}{t} < \inf_{s \in (0, \infty)} \frac{\omega(s)}{s} + \varepsilon.
	\end{equation*}
	Since $\varepsilon$ was arbitrary and the opposite estimate is trivial, we have shown that
	\begin{equation*}
		\lim_{s \to \infty} \frac{\omega(s)}{s} = \inf_{s \in (0, \infty)} \frac{\omega(s)}{s}.
	\end{equation*}
	
	To compute the second limit, let us fix some $\varepsilon > 0$ and denote
	\begin{align*}
		\xi =
			\begin{cases}
				\sup_{s \in (-\infty, 0) } \frac{\omega(s)}{s} - \varepsilon &\text{if } \sup_{s \in (-\infty, 0) } \frac{\omega(s)}{s} < \infty, \\
				\frac{1}{\varepsilon} &\text{if } \sup_{s \in (-\infty, 0) } \frac{\omega(s)}{s} = \infty.
			\end{cases} 
	\end{align*} 
	We may now find $t \in (-\infty, 0)$ and $n_0 \in \mathbb{N}$ such that
	\begin{equation*}
		\left( 1 - \frac{1}{n_0 + 1} \right) \frac{\omega(t)}{t} > \xi.
	\end{equation*}
	Then for any $s \leq n_0t$ there is some $n \geq n_0$ such that $s \in ((n+1)t, nt]$. Whence,
	\begin{equation*}
		\omega(s) \leq n\omega(t)
	\end{equation*}
	and thus (since $s$ and $t$ are both negative while $\omega(s)$ and $\omega(t)$ are both non-positive)
	\begin{equation*}
		\frac{\omega(s)}{s} \geq \frac{n\omega(t)}{s} \geq \frac{n\omega(t)}{(n+1)t} \geq \left( 1 - \frac{1}{n_0 + 1} \right) \frac{\omega(t)}{t} > \xi.
	\end{equation*}
	Since $\varepsilon$ was arbitrary, we have thus shown that
	\begin{equation*}
		\lim_{s \to -\infty} \frac{\omega(s)}{s} = \sup_{s \in (-\infty, 0) } \frac{\omega(s)}{s},
	\end{equation*}
	regardless of the finiteness or infiniteness of the right-hand side. This also shows that the limit on the left-hand side always exist. The desired estimate
	\begin{equation*}
		\lim_{s \to -\infty} \frac{\omega(s)}{s} \leq \lim_{s \to \infty} \frac{\omega(s)}{s}
	\end{equation*}
	therefore follows from \eqref{LSAF1}, which implies for every $s \in (0, \infty)$ that
	\begin{equation*}
		\frac{\omega(-s)}{-s} = \frac{-\omega(-s)}{s} \leq \frac{\omega(s)}{s}.
	\end{equation*}
	This shows the second inequality in \eqref{LSAF2} and concludes the proof.
\end{proof}

\begin{proof}[Proof of Proposition~\ref{PBI}]
	Consider the function $\omega : \mathbb{R} \to \mathbb{R}$ given by
	\begin{align*}
		\omega(s) &= \log(h_X(\exp(s))) &\textup{for } s \in \mathbb{R},
	\end{align*}
	where $h_X$ is the function defined above. The properties of $h_X$---namely that it is non-decreasing, that it satisfies for every $t_1, t_2 \in \mathbb{R}$ the estimate
	\begin{equation*}
		h_X(t_1t_2) \leq h_X(t_1) h_X(t_2),
	\end{equation*}
	and that $h_X(1) = 1$ (as $D_1$ is the identity operator)---ensure that our $\omega$ satisfies the requirements of Lemma~\ref{LSAF}. Therefore it follows by a simple change of variables that
	\begin{align*}
		\sup_{t \in (0,1)} \frac{\log(h_X(t))}{\log(t)} &= \sup_{s \in (-\infty, 0) } \frac{\omega(s)}{s} = \lim_{s \to -\infty} \frac{\omega(s)}{s} = \lim_{t \to 0_+} \frac{\log(h_X(t))}{\log(t)}, \\
		\inf_{t \in (1,\infty)} \frac{\log(h_X(t))}{\log(t)} &= \inf_{s \in (0, \infty) } \frac{\omega(s)}{s} = \lim_{s \to \infty} \frac{\omega(s)}{s} = \lim_{t \to \infty} \frac{\log(h_X(t))}{\log(t)}.
	\end{align*}
\end{proof}

\subsection{Fundamental function} \label{SSFF}

In this section, we define the fundamental function of an r.i.\ quasi-Banach function space and state some of its properties, and then do the same with the endpoint spaces. We proceed in accordance with \cite[Section~7.9]{FucikKufner13} and \cite[Section~7.10]{FucikKufner13}. We note that this topic has been also covered in \cite[Chapter~2, Section~5]{BennettSharpley88}.

In this section, we will always assume that the space $(R, \mu)$ is completely non-atomic.

Given an r.i.~quasi-Banach function norm $\lVert \cdot \rVert$ and its corresponding r.i.~quasi-Banach function space $X$, the fundamental function $\varphi_X$ of $X$ is defined for all $t$ in the range of $\mu$ by
\begin{equation*}
	\varphi_X(t) = \lVert \chi_E \rVert,
\end{equation*} 
where $E$ is some subset of $R$ of measure $\mu(E) = t$. Note that the set $E$ in the definition always exists by the assumption that $t$ is in range of $\mu$, and that the definition does not depend on the choice of $E$ since the norm satisfies \ref{P6}.

One important result is the relation of the respective fundamental functions of $X$ and of its associate space $X'$. If $X$ is a r.i.\ quasi-Banach function space and $X'$ its associate space, then the corresponding fundamental functions satisfy the estimate
\begin{equation}
\varphi_{X'}(t) \geq \frac{t}{\varphi_X(t)} \label{TFFAS1}
\end{equation} 
for all finite, non-zero $t$ in the range of $\mu$. Furthermore, if $X$ is an r.i.~Banach function space, then we have equality in \eqref{TFFAS1} for all relevant $t$. The weaker result for r.i.\ quasi-Banach function spaces follows directly from the H{\" o}lder inequality \eqref{THAS}. Proof of the stronger result for r.i.~Banach function spaces can be found for example in \cite[Theorem~7.9.6]{FucikKufner13}.

It has been shown, that fundamental function of any r.i.~Banach function space over a resonant measure space is quasiconcave, in the sense that is is non-decreasing, it attains zero only at the point zero and that the function $\frac{t}{\varphi(t)}$ is non-decreasing on $(0, \mu(R))$. For details, see for example \cite[Remark~7.9.7]{FucikKufner13}.

We are now equipped to define the Marcinkiewicz endpoint space.

\begin{definition}\label{DMES}
	Let $\varphi$ be a quasiconcave function on $[0, \mu(R))$. Then the set $M_{\varphi}$, defined as
	\begin{equation*}
	M_{\varphi} = \left \{ f \in M(R, \mu); \; \sup_{t \in (0, \mu(R))} \varphi(t) f^{**}(t) < \infty \right \},
	\end{equation*}
	is called the Marcinkiewicz endpoint space.
\end{definition} 

It follows, see for example \cite[Proposition~7.10.2]{FucikKufner13}, that the Marcinkiewicz endpoint space equipped with the naturally chosen functional
\begin{align*}
\lVert f \rVert  &= \sup_{t \in (0, \infty)} \varphi(t) f^{**}(t) & \text{for } f \in M(R, \mu)
\end{align*} 
is an r.i.\ Banach function space and its fundamental function coincides with $\varphi$ on the range of $\mu$. In fact, it is the largest such space, in the sense that if $X$ is an r.i.\ Banach function space the fundamental function of which coincides with $\varphi$ on the range of $\mu$ then $X \hookrightarrow M_{\varphi}$. Proof can be found for example in \cite[Proposition~7.10.6]{FucikKufner13}. 

To find the smallest r.i.\ Banach funtion space with given fundamental function, one must first observe that any quasiconcave function is equivalent to a concave function. This result can be found for example in \cite[Proposition~7.10.10]{FucikKufner13}.  It can be then shown, as in, for example, \cite[Theorem~7.10.12]{FucikKufner13}, that any r.i.\ Banach function space can be equivalently renormed in such way that its fundamental function is then concave (and obviously non-decreasing). For concave $\varphi$, we may then define the Lorentz endpoint space as follows.

\begin{definition} \label{DLES}
	Let $\varphi$ be a non-decreasing concave function on $[0, \mu(R))$. Then the set $\Lambda_{\varphi}$, defined as
	\begin{equation*}
	\Lambda_{\varphi} = \left \{ f \in M(R, \mu); \; \int_0^{\mu(R)}  f^* \: d\varphi < \infty \right \},
	\end{equation*}
	is called the Lorentz  endpoint space.
\end{definition}

We note that the Lebesgue--Stieltjes integral in question is well-defined since $\varphi$ is non-decreasing. We further get, see for example \cite[Proposition~7.10.16]{FucikKufner13}, that the Lorentz endpoint space equipped with the naturally chosen functional
\begin{align*}
\lVert f \rVert  &= \int_0^{\mu(R)}  f^* \: d\varphi & \text{for } f \in M(R, \mu),
\end{align*} 
is an r.i.\ Banach function space fundamental function of which coincides with $\varphi$ on the range of $\mu$. It is the smallest space with those properties, in the sense that if $X$ is an r.i.\ Banach function space such that its fundamental function coincides with $\varphi$ on the range of $\mu$, then $\Lambda_{\varphi} \hookrightarrow X$. For proof see \cite[Proposition~7.10.15]{FucikKufner13}.

The Lorentz and Marcinkiewicz endpoint spaces are mutually associate, in the sense that if $\varphi$ is a concave function, then $\Lambda_{\varphi}' = M_{\bar{\varphi}}$ and $M_{\varphi}' = \Lambda_{\bar{\varphi}}$, where $\bar{\varphi}$ is defined on $(0, \mu(R))$ by
\begin{equation*}
	\bar{\varphi}(t) = \frac{t}{\varphi(t)}.
\end{equation*}
and $\bar{\varphi}(0) = 0$. This follows directly from the discussion above, since if $X$ is an r.i.~Banach function space whose fundamental function is $\bar{\varphi}$, then the fundamental function of $X'$ is $\varphi$ and thus $\Lambda_{\varphi} \hookrightarrow X'$. Hence, $X \hookrightarrow \Lambda_{\varphi}'$, and it follows that $\Lambda_{\varphi}'$ is the largest r.i.~Banach function space whose fundamental function is $\bar{\varphi}$, i.e.~it must coincide with $M_{\bar{\varphi}}$. The argument proving that $M_{\varphi}' = \Lambda_{\bar{\varphi}}$ is similar. Furthermore, since a Banach function space is uniquely determined by its associate space, we observe that if $X$ is an r.i.~Banach function space that is distinct from both $\Lambda_{\varphi_X}$ and $M_{\varphi_X}$ then its associate space $X'$ must be distinct from both $\Lambda_{\varphi_{X'}}$ and $M_{\varphi_{X'}}$ (where $\varphi_{X'} = \overline{\varphi_X}$, as mentioned above).

In conclusion, we note that the endpoint spaces do not change, up to equivalence of the defining functionals, if $\varphi$ is replaced with an equivalent function. To be more precise, if $\varphi$ and $\psi$ are two quasi-concave functions such that $\varphi \approx \psi$ then $M_{\varphi} = M_{\psi}$, up to equivalence of norms, while if $\varphi$ and $\psi$ are both also concave then further $\Lambda_{\varphi} = \Lambda_{\psi}$, up to equivalence of norms. This is rather obvious for the Marcinkiewicz endpoints and by the mutual associateness of Lorentz and Marcinkiewicz endpoints, as discussed in the previous paragraph, it extends to the Lorentz endpoints as well.

\subsection{Slowly varying functions} \label{SSSV}

The key concept in the theory of Lorentz--Karamata spaces is that of a slowly varying function.

\begin{definition} \label{DSV}
	Let $b:(0,\infty) \rightarrow (0, \infty)$ be a measurable function. Then $b$ is said to be slowly varying, if for every $\varepsilon > 0$ there exists a non-decreasing function $b_{\varepsilon}$ and non-increasing function $b_{-\varepsilon}$ such that $t^{\varepsilon}b(t) \approx b_{\varepsilon}(t)$ on $(0, \infty)$ and $t^{-\varepsilon}b(t) \approx b_{-\varepsilon}(t)$ on $(0, \infty)$.
\end{definition}

We will, for the sake of brevity, usually abbreviate  slowly varying as s.v.

The class of s.v.\ functions includes, for example, constant positive functions and, to provide something at least slightly less trivial, the functions $t \mapsto 1+\lvert \log(t) \rvert$ and $t \mapsto 1+\log(1+\lvert \log(t) \rvert)$ occurring in the definition of Lorentz--Zygmund spaces and generalized Lorentz--Zygmund spaces, the former being introduced by Bennett and Rudnick in \cite{BennettRudnick80} while the latter were introduced by Edmunds, Gurka, and Opic in \cite{EdmundsGurka95} and later treated in great detail by Opic and Pick in \cite{OpicPick99}. As a further example of an s.v.~function, this time non-logarithmic, we present a function $b$ defined on $(0, \infty)$ by

\begin{equation*}
b(t) = \begin{cases}
e^{\sqrt{\log t}} & \text{for } t \in [1, \infty), \\
e^{\sqrt{\log t^{-1}}} & \text{for } t \in (0, 1).
\end{cases}
\end{equation*} 
Restriction of this function to the interval $(0, 1)$ was used in \cite{CianchiPick09} to characterise self-optimal spaces for Gauss-Sobolev embeddings.

Let us now recall some important properties of s.v.~functions that are either known or that follow from known results by straightforward generalisations.

\begin{lemma} \label{LSV}
	Let $b, b_2$ be s.v.\ functions. Then the following statements hold:
	\begin{enumerate}[label=\textup{(SV\arabic*)}, start=0]
		\item \label{SVEq} If $c:(0,\infty) \rightarrow (0, \infty)$ satisfies $c \approx b$ then $c$ is also a s.v.~function. 
		\item \label{SV1} The function $b^r$ is slowly varying for every $r \in \mathbb{R}$. 
		\item \label{SV2} The functions $b+b_2$, $bb_2$, $\frac{b}{b_2}$ and $t \mapsto b(\frac{1}{t})$ are	are s.v.
		\item \label{SVC} The function $b$ is bounded by positive constants on compact subsets of $(0, \infty)$.
		\item \label{CSV} Let $c > 0$, then $b(ct) \approx b(t)$ on $(0, \infty)$. More specifically, for every $\varepsilon >0$ there is a constant $C_{\varepsilon} \geq 1$ such that it holds for every $c > 0$ and every $t \in (0, \infty)$ that
		\begin{equation} \label{CSV1}
		C_{\varepsilon}^{-1} \min\{c^{-\varepsilon}, \, c^{\varepsilon} \} b(t) \leq b(ct) \leq C_{\varepsilon} \max\{c^{-\varepsilon}, \, c^{\varepsilon} \} b(t).
		\end{equation}
		\item \label{SV3} It holds for every $ \alpha \neq 0$ that
		\begin{align*}
		\lim_{t \to 0^+} t^{\alpha}b(t) = \lim_{t \to 0^+}t^{\alpha}, \\
		\lim_{t \to \infty} t^{\alpha}b(t) = \lim_{t \to \infty}t^{\alpha}.
		\end{align*}
		\item \label{SV4} Let $\alpha \neq -1$. Then
		\begin{equation*}
		\int_0^1 t^{\alpha}b(t) \: dt < \infty \iff \int_0^1 t^{\alpha} \: dt < \infty
		\end{equation*}
		and
		\begin{equation*}
		\int_1^{\infty} t^{\alpha}b(t) \: dt < \infty \iff \int_1^{\infty} t^{\alpha} \: dt < \infty.
		\end{equation*}
		Consequently,
		\begin{equation*}
		\int_0^{\infty} t^{\alpha} b(t) \: dt = \infty.
		\end{equation*}
		\item \label{LEFF} \label{LEFFH} Let $\alpha \in (0, \infty)$. Then it holds for every $t \in (0, \infty)$ that
		\begin{align}
		\int_0^{t} s^{\alpha-1} b(s) \: ds &\approx t^{\alpha} b(t), & \esssup_{s \in (0,t)} s^{\alpha} b(s) &\approx t^{\alpha} b(t), \label{LEFF1}\\
		\int_t^{\infty} s^{-\alpha-1} b(s) \: ds &\approx t^{-\alpha} b(t), & \esssup_{s \in (t,\infty)} s^{-\alpha} b(s) &\approx t^{-\alpha} b(t).  \label{LEFFH1}
		\end{align}
		\item \label{LTb} \label{LHb} Let us define, for $t \in (0, \infty)$,
		\begin{align*}
		\tilde{b}(t) &= \int_0^{t} s^{-1} b(s) \: ds, & \tilde{b}_{\infty}(t) &= \esssup_{s \in (0, t)} b(s), \\
		\hat{b}(t) &= \int_t^{\infty} s^{-1} b(s) \: ds, & \hat{b}_{\infty}(t) &= \esssup_{s \in (t, \infty)} b(s).
		\end{align*}
		Then $b \lesssim \tilde{b}$, $b \lesssim \tilde{b}_{\infty}$, $b \lesssim \hat{b}$, and $b \lesssim \hat{b}_{\infty}$ on $(0, \infty)$ and the functions $\tilde{b}$, $\tilde{b}_{\infty}$, $\hat{b}$, and  $\hat{b}_{\infty}$ are s.v.~if and only if they are finite $\lambda$-a.e.~on $(0, \infty)$. Furthermore, it holds that
		\begin{align} \label{LTb1}
		\limsup_{t \to 0^+} \frac{\tilde{b}(t)}{b(t)} = \limsup_{t \to \infty} \frac{\tilde{b}(t)}{b(t)} &= \infty, & 
		\limsup_{t \to 0^+} \frac{\hat{b}(t)}{b(t)} = \limsup_{t \to \infty} \frac{\hat{b}(t)}{b(t)} &= \infty.
		\end{align}
	\end{enumerate}
\end{lemma}

\begin{proof}
	For \ref{SV1}, \ref{SV2},  \ref{CSV}, and \ref{LEFF} see \cite[Proposition~2.2]{GogatishviliOpic05}. We note that it is not quite clear from the original formulation of \ref{CSV} in \cite[Proposition~2.2, (iii)]{GogatishviliOpic05} that the constant $C_{\varepsilon}$ depends only on $\varepsilon$, but if follows from the presented proof that it is indeed so. For \ref{SVC} see \cite[Proposition~2.1]{Bathory18} and for the first part of \ref{LTb} see \cite[Lemma~2.2]{Bathory18} and \cite[Lemma~2.1]{GurkaOpic07}. The remaining part of \ref{LTb}, i.e.~\eqref{LTb1}, has been partially proved in \cite[Lemma~2.2]{CaetanoGogatishvili11} and \cite[(2.1) and (2.2)]{NevesOpic20} and the method used in the former paper can be easily adapted to cover all the four cases mentioned in \eqref{LTb1}.
	
	The properties \ref{SVEq}, \ref{SV3}, and \ref{SV4} follow directly from the definition. We leave the details to the reader.
\end{proof}

\begin{remark} \label{RLC}
	The restrictions $ \alpha \neq 0$ and $ \alpha \neq -1$ in \ref{SV3} and \ref{SV4}, respectively, are necessary, as the property that $b$ is s.v.~is simply too weak to determine the behaviour of  $t^{\alpha}b(t)$ in these limiting cases. The counterexamples are easy to construct.
\end{remark}

\begin{remark}
	There exist three different definitions of slowly varying functions that can be found in the literature. The three defining conditions (for $b \in M((0,\infty), \lambda)$) are:
	\begin{enumerate}
		\item \label{DSVi}
		For every $\varepsilon > 0$ the function $t^{\varepsilon}b(t)$ is non-decreasing on some neighbourhoods of zero and infinity while the function $t^{-\varepsilon}b(t)$ is non-increasing on some neighbourhoods of zero and infinity.
		\item \label{DSVii}
		For every $\varepsilon > 0$ there exists a non-decreasing function $b_{\varepsilon}$ and non-increasing function $b_{-\varepsilon}$ such that
		\begin{equation*}
		\lim_{t \to 0} \frac{t^{\varepsilon}b(t)}{b_{\varepsilon}(t)} = \lim_{t \to \infty} \frac{t^{\varepsilon}b(t)}{b_{\varepsilon}(t)} = \lim_{t \to 0} \frac{t^{-\varepsilon}b(t)}{b_{-\varepsilon}(t)} =\lim_{t \to \infty} \frac{t^{-\varepsilon}b(t)}{b_{-\varepsilon}(t)} = 1.
		\end{equation*}
		\item \label{DSViii}
		$b$ satisfies Definition~\ref{DSV}.
	\end{enumerate}
	The condition \ref{DSVi} was used in the original definition of Lorentz--Karamata spaces in \cite{EdmundsKerman00} and the functions satisfying it are said to belong to the Zygmund class $\mathcal{Z}$ (see \cite{Zygmund35_3} for further details). The condition \ref{DSVii} is the closest to the original definition of slowly varying functions as given by Karamata in \cite{Karamata30} and \cite{Karamata33} --- while it is not the original definition, it is equivalent to it. Functions satisfying this condition are treated thoroughly in \cite{BinghamGoldie87}. The condition \ref{DSViii} is the one currently used in most papers concerning Lorentz--Karamata spaces and it first appeared in either \cite{GogatishviliOpic04} or \cite{GogatishviliOpic05} (the idea to use only the equivalence with monotone functions appeared originally in the paper \cite{Neves02}, but in this case the function was also required to behave the same way near zero as it does near infinity, so the definition was significantly less general).
	
	Provided that the function $b$ is assumed to be bounded on compact sets (which is very reasonable for our purposes), then clearly the validity of \ref{DSVi} implies that of \ref{DSVii} and the validity of \ref{DSVii} implies that of \ref{DSViii}. On the other hand, for any function $b$ satisfying \ref{DSVii} there is a function $b_0$ satisfying \ref{DSVi} such that
	\begin{equation*}
	\lim_{t \to 0} \frac{b(t)}{b_{0}(t)} = \lim_{t \to \infty} \frac{b(t)}{b_{0}(t)} = 1.
	\end{equation*}
	
	Some of the properties shown in Lemma~\ref{LSV} can be strengthened if we assume that $b$ satisfies \ref{DSVii}. Specifically, in \ref{CSV} and \ref{LEFF} we can replace the relation ``$\approx$" in all its occurrences with ``the ratio of the left-hand side and the right-hand side converges to $1$ at both $0$ and $\infty$" while	in \eqref{LTb1} we get that
	\begin{align*}
	\lim_{t \to 0^+} \frac{\tilde{b}(t)}{b(t)} = \lim_{t \to \infty} \frac{\tilde{b}(t)}{b(t)} &= \infty, & 
	\lim_{t \to 0^+} \frac{\hat{b}(t)}{b(t)} = \lim_{t \to \infty} \frac{\hat{b}(t)}{b(t)} &= \infty.
	\end{align*}
	For details, see \cite[Chapter~1]{BinghamGoldie87}.
\end{remark}

\begin{remark}
	The property \ref{LEFF} is very important. Not only it provides an essential tool for computations, but it also gives us a finer grasp on the definition of s.v. functions. To be more specific: while the definition requires only that for every $\varepsilon > 0$ there is some pair of monotone functions that satisfies the appropriate requirements, \ref{LEFF} provides us with a specific pair, given by explicit formulas, and asserts that there is a pair of monotone functions satisfying the above mentioned requirements if and only if this specific pair satisfies them.
\end{remark}

On several occasions it will be convenient to assume that the s.v.~function in question is smooth. This is of course not true for every s.v.~function (consider \ref{SVEq}), but it can be ensured by passing to an equivalent function, which in our applications causes no loss of generality.

\begin{theorem} \label{TSmSV}
	Let $b$ be a s.v.~function. Then there is a function $c : (0, \infty) \to (0, \infty)$ satisfying $c \approx b$ that has continuous classical derivatives of all orders (i.e.~$c \in \mathcal{C}^{\infty}$).
\end{theorem}

This theorem has been proved in full strength in \cite{PesaSV}; weaker version of it has also been developed earlier in \cite{OpicGroverTBD}.

We conclude this section by noting that if the s.v.~function $b$ is smooth (in the sense $b \in \mathcal{C}^{\infty}$) and bounded near zero, then the function $\tilde{b}_{\infty}$ is locally Lipschitz continuous, and that the same holds for $\hat{b}_{\infty}$ when $b$ is smooth and bounded near infinity.

\section{Lorentz--Karamata spaces} \label{SMR}

From now on we suppose that our measure space $(R, \mu)$ has a non-atomic measure. As noted in Section~\ref{SSNR}, this means that it is always resonant. We will also, unless stated otherwise, assume that $\mu(R) = \infty$.
\subsection{Lorentz--Karamata functionals} \label{SSLK}

\begin{definition} \label{DLK}
	Let $p \in (0, \infty], q \in (0, \infty]$ and let $b$ be an s.v.\ function. We then define the Lorentz--Karamata functionals $ \lVert \cdot \rVert_{p, q, b}$ and $ \lVert \cdot \rVert_{(p, q, b)}$,  for $f \in M$, as follows:
	\begin{align*}
	\lVert f \rVert_{p, q, b} = \lVert t^{\frac{1}{p}-\frac{1}{q}}b(t)f^*(t) \rVert_q, \\
	\lVert f \rVert_{(p, q, b)} = \lVert t^{\frac{1}{p}-\frac{1}{q}}b(t)f^{**}(t) \rVert_q,
	\end{align*}
	where $\lVert \cdot \rVert_q$ is the classical Lebesgue functional on $(0, \infty)$.
	
	We further define the corresponding Lorentz--Karamata spaces $L^{p, q, b}$ and $L^{(p, q, b)}$ as
	\begin{align*}
	L^{p, q, b} &= \{ f \in M; \; \lVert f \rVert_{p, q, b} < \infty \}, \\
	L^{(p, q, b)} &= \{ f \in M; \; \lVert f \rVert_{(p, q, b)} < \infty \}.
	\end{align*}
\end{definition}

Note that the Lorentz functionals $\lVert \cdot \rVert_{p, q}$ and $ \lVert \cdot \rVert_{(p, q)}$, defined for example in \cite[Chapter~4, Definiton~4.1]{BennettSharpley88}, are special cases of Lorentz--Karamata functionals with $b(t) = 1$. If further $p=q$ then we have that $\lVert \cdot \rVert_{p, q, b}$ coincides with the classical Lebesgue functional $\lVert \cdot \rVert_q$, as follows from \cite[Chapter~2, Proposition~1.8]{BennettSharpley88}. Another examples of Lorentz--Karamata functionals are the generalised Lorentz-Zygmung functionals studied in \cite{OpicPick99}.

The following proposition establishes the basic properties of the Lorentz--Karamata functional $\lVert \cdot \rVert_{p, q, b}$.

\begin{proposition} \label{LQ}
	Let $p, q, b$ be as above. Then the Lorentz--Karamata functional $\lVert \cdot \rVert_{p, q, b}$ is an r.i.~quasi-Banach function norm, and consequently $(L^{p, q, b},\lVert \cdot \rVert_{p, q, b})$ is an r.i.\ quasi-Banach function space, if and only if one of the following conditions hold:
	\begin{enumerate}
		\item $p < \infty$,
		\item $p = \infty$ and $\lVert t^{- \frac{1}{q}} b(t) \chi_{(0, 1)}(t) \rVert_q < \infty$.
	\end{enumerate}
	Furthermore, if those conditions are not satisfied, then $L^{p, q, b}=\{ 0 \}$.
\end{proposition}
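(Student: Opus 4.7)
The strategy is to verify directly the axioms \ref{P2}, \ref{P3}, \ref{P4}, \ref{Q1}, and \ref{P6} of an r.i.\ quasi-Banach function norm under the stated hypothesis, and then to prove that when the hypothesis fails the space collapses to $\{0\}$, thereby violating \ref{P4}. Several of these are essentially automatic: \ref{P6} and the symmetry $\lVert f \rVert = \lVert \; \lvert f \rvert \; \rVert$ hold because the functional depends only on $f^*$; positive homogeneity in \ref{Q1}(a) follows from $(af)^* = \lvert a \rvert f^*$; the non-degeneracy \ref{Q1}(b) follows because the weight $t^{1/p-1/q}b(t)$ is strictly positive on $(0,\infty)$; \ref{P2} follows from $f \leq g \Rightarrow f^* \leq g^*$ and monotonicity of $\lVert\cdot\rVert_q$; and \ref{P3} follows from $f_n \uparrow f \Rightarrow f_n^* \uparrow f^*$ combined with monotone convergence.

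For the subadditivity up to a constant in \ref{Q1}(c), I will apply $(f+g)^*(t) \leq f^*(t/2)+g^*(t/2)$, invoke the (quasi-)triangle inequality of $\lVert\cdot\rVert_q$, and then rewrite each resulting term via the substitution $s = t/2$, using $b(2s) \approx b(s)$ from \ref{CSV} to recover $\lVert f \rVert_{p,q,b}$ and $\lVert g \rVert_{p,q,b}$ up to a harmless multiplicative constant depending only on $p$, $q$, and $b$.

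The critical axiom is \ref{P4}. For $E \subseteq R$ with $\mu(E) < \infty$ we have $\chi_E^* = \chi_{(0,\mu(E))}$, so
\[
\lVert \chi_E \rVert_{p,q,b} = \bigl\lVert t^{1/p - 1/q}b(t)\chi_{(0,\mu(E))}(t) \bigr\rVert_q.
\]
I split at $t = 1$. The contribution from $(1,\mu(E))$, when $\mu(E) > 1$, is always finite: $b$ is bounded on the compact $[1,\mu(E)]$ by \ref{SVC} and the power weight is integrable (or bounded) there. The contribution from $(0,\min(1,\mu(E)))$ produces the dichotomy. For $p < \infty$ the exponent satisfies $q/p - 1 > -1$, so for $q < \infty$ finiteness follows from \ref{SV4}, and for $q = \infty$ from \ref{LEFF}, without any further assumption on $b$. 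For $p = \infty$ the weight reduces to $t^{-1/q}b(t)$, and its integrability (respectively boundedness, if $q=\infty$) on $(0,1)$ is literally the stated hypothesis.

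For the triviality part, suppose $p = \infty$ and $\lVert t^{-1/q} b(t) \chi_{(0,1)}(t) \rVert_q = \infty$, and take any non-zero $f \in M$. Then there exist $t_0 \in (0,1]$ and $c > 0$ with $f^*(t) \geq c$ on $(0,t_0)$, so
\[
\lVert f \rVert_{\infty,q,b} \geq c \, \bigl\lVert t^{-1/q}b(t)\chi_{(0,t_0)}(t) \bigr\rVert_q.
\]
By \ref{SVC} the contribution from $(t_0,1)$ is finite, so divergence on $(0,1)$ forces divergence on $(0,t_0)$; hence $\lVert f \rVert_{\infty,q,b} = \infty$ and $L^{\infty,q,b} = \{0\}$. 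I expect the subtlest point to be coordinating the cases $q<\infty$ and $q=\infty$ uniformly throughout the verification of \ref{P4} and in this sharpness argument; everything else reduces smoothly to the properties of $f^*$ and of $\lVert\cdot\rVert_q$.
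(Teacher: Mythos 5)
Your proof is correct and follows essentially the same route as the paper's: the quasi-triangle inequality via $(f+g)^*(t)\le f^*(t/2)+g^*(t/2)$ together with \ref{CSV} and the substitution $s=t/2$, the reduction of \ref{P4} to the weight's behaviour on $(0,1)$ using $\chi_E^*=\chi_{(0,\mu(E))}$ with \ref{SV3}/\ref{SV4}, and triviality deduced from the failure of \ref{P4}. You merely spell out some steps the paper declares immediate (the remaining axioms, and the lower bound $f^*\ge c$ on $(0,t_0)$ for nonzero $f$ in the triviality argument), all of which are fine.
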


\begin{proof}
	Throughout this proof, the letters $f, g$ will denote some arbitrary functions belonging to $M$. We begin by proving the sufficiency.
	
	All the properties, except the the subadditivity up to a constant and \ref{P4} are immediate consequences of the definition, properties of the non-increasing rearrangement, and properties of the Lebesgue functionals.
	
	For the subadditivity up to a constant, if we remember that the non-increasing rearrangement satisfies $(f+g)^*(t_1+t_2) \leq f^*(t_1) + g^*(t_2)$, we may use \ref{CSV} to get the following estimate:
	\begin{equation*} 
	\begin{split}
		\lVert f+g \rVert_{p, q, b} &= \lVert t^{\frac{1}{p}-\frac{1}{q}}b(t)(f+g)^*(t) \rVert_q \\
		&\leq \left \lVert t^{\frac{1}{p}-\frac{1}{q}}b(t) \left ( f^* \left (\frac{t}{2} \right )+g^* \left (\frac{t}{2} \right )  \right ) \right  \rVert_q \\
		&\lesssim \left  \lVert \left ( \frac{t}{2} \right )^{\frac{1}{p}-\frac{1}{q}}b \left (\frac{t}{2} \right ) \left (  f^* \left (\frac{t}{2} \right )+g^* \left (\frac{t}{2} \right ) \right ) \right  \rVert_q.
	\end{split}
	\end{equation*}
	Now, it remains only to use the simple change of variables $s = \frac{t}{2}$ and use the subadditivity (up to a constant) of $\lVert \cdot \rVert_q$ to get the desired result. We have thus verified that $ \lVert \cdot \rVert_{p, q, b}$ satisfies \ref{Q1}.
	
	It remains to verify \ref{P4}. Since it holds for $E \subseteq R$, $\mu(E) < \infty$ that $\chi_E^* = \chi_{(0, \mu (E))}$, it follows that \ref{P4} is equivalent to the question whether $\lVert t^{\frac{1}{p} - \frac{1}{q}} b(t) \chi_{(0, 1)}(t) \rVert_q < \infty$. If $p \neq \infty$, this is satisfied either by \ref{SV4}  or by \ref{SV3}, while in the remaining case it is assumed.
	
	The preceding argument also shows that $\lVert \cdot \rVert_{p, q, b}$ does not have the property \ref{P4} when $p=\infty$ and $\lVert t^{- \frac{1}{q}} b(t) \chi_{(0, 1)}(t) \rVert_q = \infty$, which establishes the necessity. It also follows immediately that, in this case, $L^{p, q, b}=\{ 0 \}$ which concludes the proof.
\end{proof}

It is worth mentioning that for some special choices of parameters $p, q, b$, $ \lVert \cdot \rVert_{p, q, b}$ may qualify to be an r.i.\ Banach function norm on $M$. Let us name at least the cases $p = q \in [1, \infty]$ while $b(t) = 1$ for all $t \in (0, \infty)$ when $ \lVert \cdot \rVert_{p, q, b}$ coincides with the classical Lebesgue functional $\lVert \cdot \rVert_q$. Furthermore, even if it is not itself an r.i.~Banach function norm, it may still be equivalent to it; we characterise for which choices of parameters this happens in Theorem~\ref{TLKBFS}. Finally, the range of parameters for which $ \lVert \cdot \rVert_{p, q, b}$ satisfies the condition \ref{P5} is characterised in Corollary~\ref{CP5b}.

We now present the following useful observation that we may, when working with $L^{\infty, \infty, b}$, always assume $b$ to be non-decreasing. This result first appeared in \cite{Bathory18}, where one can also find its proof. Note that the assumption that $b$ is continuous is present only to avoid unnecessary technicalities and comes at no loss of generality due to Theorem~\ref{TSmSV}. Furthermore, the assumption that $b$ is bounded near zero is natural and, as follows from Proposition~\ref{LQ}, also causes no loss of generality.  

\begin{proposition} \label{PbND}
	Let $b$ be a continuous s.v.~function and assume that it is bounded near zero. Then $L^{\infty, \infty, b} = L^{\infty, \infty, \tilde{b}_{\infty}}$, up to equivalence of the defining functionals, where $\tilde{b}_{\infty}$ is the function defined in \ref{LTb}, that is
	\begin{align*}
	\tilde{b}_{\infty}(t) &= \esssup_{s \in (0, t)} b(s) &\textup{for } t \in (0, \infty).
	\end{align*}
\end{proposition}

We would like to note that Proposition~\ref{PbND} is a special case of a more general principle, for details see \cite[Lemma~5.1]{GogatishviliSoudsky14}.

We now turn our attention to $\lVert \cdot \rVert_{(p, q, b)}$. Because different properties of $\lVert \cdot \rVert_{(p, q, b)}$ have different requirements, we will treat them separately. We start by examining the non-triviality.

\begin{proposition} \label{PP4}
	Let $p, q, b$ be as in Definition~\ref{DLK}. Then $\lVert \cdot \rVert_{(p, q, b)}$ satisfies \ref{P4} if and only if $p$ and $b$ satisfy one of the following conditions:
	\begin{enumerate}
		\item \label{Casei} $p \in (1, \infty)$,
		\item \label{Caseii}$p = 1$ and $\lVert t^{- \frac{1}{q}} b(t) \chi_{(1, \infty)}(t) \rVert_q < \infty$,
		\item \label{Caseiii}$p= \infty$ and $\lVert t^{- \frac{1}{q}} b(t) \chi_{(0, 1)}(t) \rVert_q < \infty$.
	\end{enumerate}
	Furthermore,  if none of those conditions is satisfied, then $L^{(p, q, b)}=\{ 0 \}$.
\end{proposition}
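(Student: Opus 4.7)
The plan is to use exactly the reduction already used in the proof of Proposition~\ref{LQ}: because $\lVert \cdot \rVert_{(p,q,b)}$ depends on $f$ only through $f^{**}$, which for $f = \chi_E$ depends only on $r := \mu(E)$, property \ref{P4} is equivalent to $\lVert \chi_E \rVert_{(p,q,b)} < \infty$ for every finite $r > 0$. Non-atomicity of $\mu$ together with $\mu(R) = \infty$ guarantees that sets of every such measure $r$ exist. A direct computation gives $\chi_E^{**}(t) = \min\{1, r/t\}$, so for $q < \infty$ the question reduces to the finiteness of
\begin{equation*}
	I_1(r) + I_2(r) := \int_0^r t^{q/p - 1} b(t)^q \, dt \; + \; r^q \int_r^{\infty} t^{q/p - q - 1} b(t)^q \, dt
\end{equation*}
for every $r > 0$.

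The heart of the argument is the observation that the exponent $q/p - 1$ appearing in $I_1$ equals $-1$ precisely when $p = \infty$, and the exponent $q/p - q - 1$ appearing in $I_2$ equals $-1$ precisely when $p = 1$. Away from these two critical values, \ref{SV4} (combined with \ref{SVC} to replace the cut-off at $r$ by the cut-off at $1$ at the cost of a finite error) determines convergence purely by the sign of the shifted exponent: one obtains $I_1(r) < \infty$ for every $p < \infty$, $I_2(r) < \infty$ for every $p > 1$, and $I_2(r) = \infty$ for every $p < 1$. At the critical values themselves, finiteness of $I_1$ when $p = \infty$ is equivalent to condition~\ref{Caseiii}, and finiteness of $I_2$ when $p = 1$ is equivalent to condition~\ref{Caseii}.

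The case $q = \infty$ is handled in the same spirit, with \ref{LEFF} replacing \ref{SV4}. Here $\lVert \chi_E \rVert_{(p,\infty,b)}$ equals the maximum of $\esssup_{0 < t \leq r} t^{1/p} b(t)$ and $r \, \esssup_{t > r} t^{1/p - 1} b(t)$; by \ref{LEFF} these two suprema are $\approx r^{1/p} b(r)$ whenever $p < \infty$ and $p > 1$, respectively, and the second is infinite when $p < 1$ by \ref{SV3}; at the critical values $p = \infty$ and $p = 1$ they reduce, via \ref{SVC}, to the essential suprema over $(0,1)$ and $(1, \infty)$ appearing in~\ref{Caseiii} and~\ref{Caseii}.

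For the ``furthermore'' assertion, $\sigma$-finiteness ensures that any $f \in M$ which is not $\mu$-a.e.\ zero dominates $\alpha \chi_F$ for some $\alpha > 0$ and some $F \subseteq R$ with $0 < \mu(F) < \infty$; monotonicity of $f^*$ and $f^{**}$ then gives $\lVert f \rVert_{(p,q,b)} \geq \alpha \lVert \chi_F \rVert_{(p,q,b)} = \infty$ under the negation of \ref{Casei}--\ref{Caseiii}. The main technical obstacle is not analytical but combinatorial, namely keeping the bookkeeping straight across the four ranges $p < 1$, $p = 1$, $1 < p < \infty$, $p = \infty$ against both $q < \infty$ and $q = \infty$, and identifying precisely which of $I_1$ or $I_2$ is critical in each case; all analytic inputs are already supplied by Lemma~\ref{LSV}.
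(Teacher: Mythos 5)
Your proposal is correct and follows essentially the same route as the paper: both reduce \ref{P4} to the finiteness of $\lVert \chi_E \rVert_{(p,q,b)}$, compute $\chi_E^{**}$ explicitly, split into the integral (or supremum) near zero with exponent $\frac{q}{p}-1$ and the one near infinity with exponent $\frac{q}{p}-q-1$, and settle each via \ref{SV4} (resp.\ \ref{SV3} for $q=\infty$) away from the critical exponents, with conditions \ref{Caseii} and \ref{Caseiii} absorbing the two critical cases. Your treatment of the ``furthermore'' claim via domination of $\alpha\chi_F$ is slightly more explicit than the paper's, which simply calls it a direct consequence.
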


\begin{proof}
	The situation is more complicated than when we were examining \ref{P4} in Proposition~\ref{LQ}, because, if we fix some $E \subseteq R$ with measure $\mu(E) < \infty$, then $(\chi_E)^{**}$ is not equal to $\chi_{(0, \mu(E))}$ as before but rather to the function $\varrho$ defined as follows
	\begin{equation*}
	\varrho(t) = 
	\begin{cases}
	1 & \text{for } t \in (0, \mu(E)), \\
	\frac{\mu(E)}{t} & \text{for } t \in [ \mu(E), \infty).
	\end{cases}
	\end{equation*}
	Thus, in order to verify \ref{P4}, it is necessary to deal not only with integrability/boundedness  on deleted neighbourhoods of zero, but also on deleted neighbourhoods of infinity. To be precise, if $q \in (0, \infty)$, then \ref{P4} is equivalent to the simultaneous validity of the following two conditions: 
	\begin{enumerate}[label=(\alph*)]
		\item \label{LNa} $t^{\frac{q}{p}-1}b^q(t)$ is integrable on some deleted neighbourhood of zero.
		\item \label{LNb} $t^{\frac{q}{p} -q -1}b^q(t)$ is integrable on some deleted neighbourhood of infinity.
	\end{enumerate}
	Just as in Proposition~\ref{LQ}, condition \ref{LNa} is satisfied for $p < \infty$ by \ref{SV4}, and assumed in the remaining case by \ref{Caseiii}. Similarly, \ref{LNb} is satisfied for $p > 1$ by \ref{SV4} and assumed in the remaining case by \ref{Caseii}. The method for $q = \infty$ remains unchanged, only instead of \ref{SV4}, \ref{SV3} is used.
	
	These arguments also show that the presented conditions are necessary for \ref{P4}. The remaining claim that $L^{(p, q, b)}=\{ 0 \}$ when they are not satisfies is a direct consequence.
\end{proof}

We now show some properties that are common for all parameters $p, q, b$ allowed in Definition~\ref{DLK}. We note that these results are not interesting in the cases when $L^{(p, q, b)}=\{ 0 \}$, but we want to emphasise that it is only the property \ref{P4} that gets violated in those situations.

\begin{proposition}  \label{PP5}
	Let $p, q, b$ be as in Definition~\ref{DLK}. Then $\lVert \cdot \rVert_{(p, q, b)}$ satisfies \ref{Q1}, \ref{P2}, \ref{P3}, \ref{P5} and \ref{P6}.
\end{proposition}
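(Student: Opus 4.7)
The plan is to verify each of the five axioms in turn, transferring the corresponding property of the Lebesgue functional $\lVert \cdot \rVert_q$ via the standard behaviour of the maximal function $f^{**}$. The facts I will use throughout are the pointwise subadditivity $(f+g)^{**} \leq f^{**}+g^{**}$, the monotonicity $f \leq g$ a.e.\ $\Rightarrow f^{**} \leq g^{**}$, the Fatou-type property $f_n \uparrow f$ a.e.\ $\Rightarrow f_n^{**} \uparrow f^{**}$ (which follows from the analogous property of $f^*$ combined with the monotone convergence theorem), and the implication $f^* = g^* \Rightarrow f^{**} = g^{**}$. All of these are recorded in \cite[Chapter~2]{BennettSharpley88}.

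For \ref{Q1}, positive homogeneity and the equivalence $\lVert f \rVert_{(p,q,b)} = 0 \Leftrightarrow f = 0$ a.e.\ are immediate from the corresponding facts for $\lVert \cdot \rVert_q$, together with the observation that the weight $t^{1/p-1/q}b(t)$ is strictly positive a.e., so the norm vanishes iff $f^{**} \equiv 0$, which is equivalent to $f = 0$ a.e. The subadditivity up to a constant follows from the pointwise subadditivity of $f^{**}$ and the same property of $\lVert \cdot \rVert_q$ (which is an honest norm when $q \geq 1$ and subadditive up to a constant when $q \in (0,1)$). Properties \ref{P2}, \ref{P3}, and \ref{P6} now follow immediately by applying the corresponding properties of $\lVert \cdot \rVert_q$ to the transferred function $t^{1/p-1/q}b(t)f^{**}(t)$.

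The only property requiring a separate argument is \ref{P5}. Fix $E \subseteq R$ with $\mu(E) = r \in (0,\infty)$. The Hardy--Littlewood inequality~\eqref{THLI} together with the definition of $f^{**}$ gives
\begin{equation*}
\int_E |f| \, d\mu \leq \int_0^r f^*(s) \, ds = r \, f^{**}(r).
\end{equation*}
To control $r\,f^{**}(r)$ by $\lVert f \rVert_{(p,q,b)}$, I would fix any $\delta \in (0,r)$ and exploit that $f^{**}$ is non-increasing, so $f^{**}(t) \geq f^{**}(r)$ on $[\delta, r]$, while the weight $t^{q/p-1}b^q(t)$ is bounded below by a positive constant on $[\delta, r]$ by \ref{SVC}. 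For $q < \infty$ this yields
\begin{equation*}
\lVert f \rVert_{(p,q,b)}^q \geq \left(\int_\delta^r t^{q/p-1}b^q(t)\,dt\right)(f^{**}(r))^q,
\end{equation*}
and the bracketed integral is a finite positive constant depending only on $\delta, r, p, q, b$. The case $q = \infty$ is handled by the obvious analogue with $\esssup$ in place of the integral.

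I do not expect a genuine obstacle. The only subtlety worth flagging is that \ref{P5} holds regardless of the parameters, and in particular regardless of whether $t^{q/p-1}b^q(t)$ is integrable on neighbourhoods of $0$ or $\infty$ (which is what controls \ref{P4}); for \ref{P5} only the local behaviour on the compact subinterval $[\delta, r]$ matters, and \ref{SVC} guarantees the required positivity there uniformly in the choice of $p$, $q$, $b$.
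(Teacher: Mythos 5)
Your proof is correct. For \ref{Q1}, \ref{P2}, \ref{P3}, and \ref{P6} you do exactly what the paper does: transfer the corresponding properties of $\lVert\cdot\rVert_q$ through the standard properties of the maximal function. For \ref{P5} your argument is a genuine (if minor) variant of the paper's. Both proofs start from the Hardy--Littlewood reduction $\int_E\lvert f\rvert\,d\mu\le\int_0^{r}f^*=r f^{**}(r)$ with $r=\mu(E)$, but you then bound $f^{**}(r)$ by restricting the $L^q$-norm to the compact interval $[\delta,r]$, where $f^{**}(t)\ge f^{**}(r)$ and where \ref{SVC} makes the weight $t^{q/p-1}b^q(t)$ comparable to a positive constant, so your constant $r\bigl(\int_\delta^r t^{q/p-1}b^q(t)\,dt\bigr)^{-1/q}$ is automatically finite and positive. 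The paper instead restricts the norm to the tail $(\mu(E),\infty)$, using $\int_0^t f^*\ge\int_0^{\mu(E)}f^*$ there; the resulting constant $C=\lVert\chi_{(\mu(E),\infty)}(t)\,t^{1/p-1/q-1}b(t)\rVert_q$ can be infinite (e.g.\ $p=1$, $b\equiv1$), which forces the paper to dispose of that case separately by invoking Proposition~\ref{PP4} to conclude $\lVert f\rVert_{(p,q,b)}=\infty$ for nonzero $f$. Your localization below $\mu(E)$ sidesteps that case distinction entirely and makes the proof of \ref{P5} self-contained, at no cost; the paper's version has the mild advantage of exhibiting the tail constant that reappears naturally in the \ref{P4} analysis.
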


\begin{proof}
	All the properties except \ref{P5} are direct consequences of the definition, the properties of the maximal function and the properties of the Lebesgue functionals. We thus focus on \ref{P5}.
	
	Fix some $E \subseteq R$ with measure $\mu(E) < \infty$ and arbitrary $f \in M$. We may assume that $f \neq 0$ (in $M$). Thanks to the Hardy-Littlewood inequality \eqref{THLI} it suffices to prove that there is some finite constant $C_E$ satisfying
	\begin{equation}
	\int_0^{\mu(E)} f^*(t) \: dt \leq  C_E \lVert f \rVert_{(p, q, b)} \label{PP51}
	\end{equation} 
	for all $f \in M$. It holds that
	\begin{equation*}
	\begin{split}
		\lVert f \rVert_{(p, q, b)} &\geq \left \lVert \chi_{(\mu(E), \infty)}(t) t^{\frac{1}{p}-\frac{1}{q} -1}b(t) \int_0^t f^{*}(s) \: ds \right \rVert_q \\
		&\geq C \int_0^{\mu(E)} f^{*}(s) \: ds,
	\end{split}
	\end{equation*}
	where 
	\begin{equation*}
	C = \lVert \chi_{(\mu(E), \infty)}(t) t^{\frac{1}{p}-\frac{1}{q} -1}b(t) \rVert_q > 0,
	\end{equation*}
	since both $t^{\frac{1}{p}-\frac{1}{q} -1}$ and $b$ are non-zero on $(0, \infty)$.
	
	Now, if $C < \infty$ then we have just proved \eqref{PP51} with $C_E = C^{-1}$, while in the other case we get from Proposition~\ref{PP4} that $\lVert f \rVert_{(p, q, b)} = \infty$ and \eqref{PP51} thus holds trivially.
\end{proof}

Now, it follows that if we take $q \in [1, \infty]$ we will have that both $f \mapsto f^{**}$ and $\lVert \cdot \rVert_q$ are subadditive, which gives us the remaining property of r.i.\ Banach function norms. Hence, we have the following proposition.

\begin{proposition} \label{LN}
	Let $p, q, b$ be as in Definition~\ref{DLK} and suppose  $q \in [1, \infty]$. Then $\lVert \cdot \rVert_{(p, q, b)}$ is a rearrangement invariant Banach function norm on $M$, and consequently $L^{(p, q, b)}$ is an r.i.\ Banach function space, if and only if $p$ and $b$ satisfy one of the following conditions:
	\begin{enumerate}
		\item $p \in (1, \infty)$,
		\item $p = 1$ and $\lVert t^{- \frac{1}{q}} b(t) \chi_{(1, \infty)}(t) \rVert_q < \infty$,
		\item $p= \infty$ and $\lVert t^{- \frac{1}{q}} b(t) \chi_{(0, 1)}(t) \rVert_q < \infty$.
	\end{enumerate}
	Furthermore,  if none of those conditions is satisfied, then $L^{(p, q, b)}=\{ 0 \}$.
\end{proposition}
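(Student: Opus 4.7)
The plan is to assemble the conclusion from the already established propositions together with one additional ingredient, namely the subadditivity of $f \mapsto f^{**}$ combined with Minkowski's inequality.

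Proposition~\ref{PP5} gives that $\lVert \cdot \rVert_{(p,q,b)}$ satisfies \ref{Q1}, \ref{P2}, \ref{P3}, \ref{P5} and \ref{P6} regardless of the choice of $p, q, b$, while Proposition~\ref{PP4} characterises the validity of \ref{P4} as being precisely equivalent to one of the three listed conditions, and furthermore identifies failure of \ref{P4} with the collapse $L^{(p,q,b)} = \{0\}$. Thus for the necessity direction, if none of (i)--(iii) holds then \ref{P4} fails, so $\lVert\cdot\rVert_{(p,q,b)}$ cannot be a Banach function norm, and $L^{(p,q,b)}=\{0\}$.

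For sufficiency, under the additional hypothesis $q \in [1, \infty]$, the only axiom remaining to upgrade is the genuine subadditivity in \ref{P1}(c), as opposed to subadditivity up to a constant as in \ref{Q1}(c). I would use that the maximal function is genuinely subadditive: from the well-known inequality $\int_0^t (f+g)^*(s)\,ds \le \int_0^t f^*(s)\,ds + \int_0^t g^*(s)\,ds$ (see \cite[Chapter~2, Theorem~3.4]{BennettSharpley88}) one obtains $(f+g)^{**}(t) \le f^{**}(t) + g^{**}(t)$ for every $t \in (0, \infty)$. Since $q \ge 1$, Minkowski's inequality applies to $\lVert \cdot \rVert_q$ (including the case $q = \infty$), so combining these two facts yields
\begin{equation*}
\lVert f+g \rVert_{(p,q,b)} \le \bigl\lVert t^{\frac{1}{p}-\frac{1}{q}}b(t)f^{**}(t) \bigr\rVert_q + \bigl\lVert t^{\frac{1}{p}-\frac{1}{q}}b(t)g^{**}(t) \bigr\rVert_q = \lVert f \rVert_{(p,q,b)} + \lVert g \rVert_{(p,q,b)}.
\end{equation*}
Together with Propositions~\ref{PP4} and \ref{PP5}, this establishes the full set of axioms \ref{P1}--\ref{P6}, so $\lVert \cdot \rVert_{(p,q,b)}$ is an r.i.\ Banach function norm and $L^{(p,q,b)}$ the associated r.i.\ Banach function space.

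There is no substantial obstacle here: all the technical work has already been set up by Propositions~\ref{PP4} and \ref{PP5}, and the only new ingredient is the clean subadditivity of the maximal function paired with Minkowski's inequality, which is precisely the reason why the hypothesis $q \ge 1$ appears in this statement (and not in the analogous statement for $\lVert \cdot \rVert_{p,q,b}$, where $f^*$ in place of $f^{**}$ does not enjoy genuine subadditivity).
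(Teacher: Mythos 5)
Your proposal is correct and matches the paper's own argument: the paper likewise derives this proposition from Propositions~\ref{PP4} and~\ref{PP5}, noting that for $q \in [1,\infty]$ both $f \mapsto f^{**}$ and $\lVert \cdot \rVert_q$ are genuinely subadditive, which upgrades \ref{Q1} to \ref{P1}. Your write-up simply makes explicit the Minkowski step that the paper leaves as a one-line remark.
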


It turns out that the assumption $q \in [1, \infty]$ is also necessary in quite a fundamental way: if $q < 1$ then $\lVert \cdot \rVert_{(p, q, b)}$ is not even equivalent to any Banach function norm. Since the proof requires some tools that will be established below, we prove this later in Theorem~\ref{TLKBFS2}.               

We follow with a useful observation similar to Proposition~\ref{PbND} which tells us that we may, when working with $L^{(1,\infty,b)}$, always assume that $b$ is non-increasing. Note that the assumption that $b$ is continuous is again present only to avoid unnecessary technicalities and comes at no loss of generality due to Theorem~\ref{TSmSV}. Furthermore, the assumption that it is bounded near infinity is natural and, as per Proposition~\ref{PP4}, also causes no loss of generality.

\begin{proposition} \label{PbNI}
	Let $b$ be a continuous s.v. function and assume that it is bounded near infinity. Then $L^{(1,\infty,b)} = L^{(1,\infty, \hat{b}_{\infty})}$, up to equivalence of norms, where $\hat{b}_{\infty}$ is the function defined in \ref{LTb}, that is
	\begin{align*}
		\hat{b}_{\infty}(t) &= \esssup_{s \in (t, \infty)} b(s) &\textup{for } t \in (0, \infty).
	\end{align*}.
\end{proposition}

\begin{proof}
	Since the function $t \mapsto t b(t) f^{**}(t)$ is, by our assumptions, continuous, the result follows from the following chain of relations that hold for every $f \in M$:
	\begin{equation*}
	\begin{split}
		\lVert f \rVert_{L^{(1, \infty, b)}} &\leq \lVert f \rVert_{L^{(1, \infty, \hat{b}_{\infty})}} = \sup_{t \in (0, \infty) } \sup_{s \in [t, \infty)} b(s) \int_0^{t} f^* \: d\lambda \\
		&\leq \sup_{t \in (0, \infty) } \sup_{s \in [t, \infty)} b(s) \int_0^{s} f^* \: d\lambda = \sup_{s \in (0, \infty)} b(s) \int_0^{s} f^* \: d\lambda = \lVert f \rVert_{L^{(1, \infty, b)}}.
	\end{split}
	\end{equation*}
\end{proof}

We conclude with the following important remark concerning the representation spaces of Lorentz--Karamata spaces.
\begin{remark} \label{RemRepSpace}
	The definitions of Lorentz--Karamata functionals and spaces do not in any way depend on the structure of the underlying non-atomic measure space $(R, \mu)$. Specifically, given a Lorentz--Karamata functional $\lVert \cdot \rVert_{p, q, b}$ (or $\lVert \cdot \rVert_{(p, q, b)}$) over $(R, \mu)$, we may use the very same definition to define a Lorentz--Karamata functional over $([0, \infty), \lambda)$. It is evident that all the properties shown in this section also apply to this new functional and that this functional is the representation functional of $\lVert \cdot \rVert_{p, q, b}$ (or $\lVert \cdot \rVert_{(p, q, b)}$, respectively). The same of course holds also for the Lorentz--Karamata spaces $L^{p,q,b}$ (or $L^{(p,q,b)}$).
\end{remark}

\subsection{The fundamental function and endpoint spaces} \label{SecFF}
In this section we compute the fundamental function for all kinds of Lorentz--Karamata spaces and describe the corresponding Lorentz and Marcinkiewicz endpoint spaces. We will always assume that $\mu(R) = \infty$, as the modifications for the remaining case are easy.

\begin{remark}
	Let us note that since Lorentz--Karamata spaces are in general just r.i.~quasi-Banach function spaces, their fundamental functions do not have to be quasi-concave, much less concave, and it might not be possible to renorm them in a way that would correct this. This creates a formal obstacle for the definition of the Lorentz and Marcinkiewicz endpoint spaces, but it can be corrected at least in the case when the fundamental function in question is equivalent to a quasi-concave function, for then there also exists some equivalent concave function for which the Lorentz and Marcinkiewicz endpoint spaces can be correctly defined. We will accept those as the Lorentz and Marcinkiewicz endpoint spaces corresponding to the original fundamental function and keep the notation established in Section~\ref{SSFF}, that is $\Lambda_{\varphi}$ and $M_{\varphi}$, respectively, even though the actual function defining those spaces might be different from $\varphi$. Note that this convention does not cause ambiguity, since the Lorentz and Marcinkiewicz endpoint spaces do not depend on the precise choice of the defining function, in the sense that choosing any equivalent concave function would yield the same spaces (for the Marcinkiewicz endpoint space the same thing holds even for any equivalent quasi-concave function).
\end{remark}

Let us now start with the more pathological cases.

\begin{proposition} \label{PFFT}
	Let $p, q, b$ be as in Definition~\ref{DLK} and suppose that $p<1$ or that both $p = 1$ and $b$ is not equivalent to a non-increasing function. Then the fundamental function $\varphi_X$ of $X = L^{p,q,b}$ is given for $t \in [0, \infty]$ by the formulas
	\begin{align} 
		\varphi_X(t) &\approx t^{\frac{1}{p}}b(t) & \text{for } t < \infty, \label{PFFT1} \\
		\varphi_X(t) &= \infty & \text{for } t = \infty. \label{PFFT2}
	\end{align}
	Thence it is not equivalent to a quasi-concave function and it makes no sense to consider the Lorentz and Marcinkiewicz endpoint spaces.
\end{proposition}

\begin{proof}
	The formulas \eqref{PFFT1} and \eqref{PFFT2} follow from the definition by a simple calculation. \eqref{PFFT1} uses \ref{LEFF}, while \eqref{PFFT2} follows from it using the Fatou property \ref{P3} of $\lVert \cdot \rVert_{p,q,b}$ and \ref{SV3}. Furthermore, since for $t \in (0, \infty)$ we have that
	\begin{align*}
		\frac{t}{\varphi_X(t)} &= t^{1-\frac{1}{p}}b^{-1}(t) &\textup{for } t \in (0, \infty),
	\end{align*}
	which is by our assumptions not equivalent to a non-decreasing function, we obtain that $\varphi_X$ is not equivalent to a quasi-concave function.
\end{proof}

\begin{remark}
	The fundamental function of $L^{(p, q, b)}$ is not interesting for $p<1$ because the space is trivial for this choice of $p$ (see Proposition~\ref{PP4}).
\end{remark}

We now follow with the much more interesting cases where the fundamental function is equivalent to a quasi-concave function.

\begin{theorem} \label{TFFA*}
	Let $p, q, b$ be as in Definition~\ref{DLK} and assume that either $p\in (1,\infty)$ or that both $p=1$ and $b$ is equivalent to a non-increasing function. Then the fundamental function $\varphi_X$ of $X = L^{p,q,b}$ is given for $t \in [0, \infty]$ by the formulas
	\begin{align*} 
		\varphi_X(t) &\approx t^{\frac{1}{p}}b(t) & \text{for } t < \infty, \\
		\varphi_X(t) &= \infty & \text{for } t = \infty.
	\end{align*}
	Thence it is equivalent to a quasi-concave function and the corresponding Lorentz and Marcinkiewicz endpoint spaces are given by
	\begin{align*}
		&\Lambda_{\varphi_X} = L^{p,1,b}, &M_{\varphi_X} = L^{(p, \infty, b)},
	\end{align*}
	up to equivalence of the defining functionals.
	
	Moreover, if $p>1$ then the same holds also for $X = L^{(p,q,b)}$.
\end{theorem}

\begin{proof}
	The formulas for $\varphi_X$ follow by precisely the same computation as in Proposition~\ref{PFFT}, but this time the ratio
	\begin{align*}
		\frac{t}{\varphi_X(t)} &= t^{1-\frac{1}{p}}b^{-1}(t), &t \in (0, \infty),
	\end{align*}
	is equivalent to a non-decreasing function and therefore we obtain that the function $\varphi_X$ is equivalent to a quasi-concave function. That $M_{\varphi_X} = L^{(p, \infty, b)}$ now follows directly from the corresponding definitions. Furthermore, the function $t^{\frac{1}{p}-1}b(t)$ is by our assumptions equivalent to some non-increasing function which we will denote $\psi$. Then it follows from \ref{LEFF} that
	\begin{align*}
		\varphi_X(t) &\approx \Psi(t) = \int_0^t \psi(s) \: ds &\textup{for } t \in (0, \infty), 
	\end{align*}
	where $\Psi$ is a concave function, and we may therefore use it to define our Lorentz endpoint space, that is to say we put $\Lambda_{\varphi_X} = \Lambda_{\Psi}$. We may now conclude that it holds for every $f \in M$ that
	\begin{equation*}
		\lVert f \rVert_{\Lambda_{\Psi}} = \int_0^{\infty} f^* \: d\Psi = \int_0^{\infty} f^* \psi \: d\lambda \approx \int_0^{\infty} f^*(t) t^{\frac{1}{p}-1}b(t) \: dt = \lVert f \rVert_{p,1,b}.
	\end{equation*}
	
	It remains to show that when $p>1$ the formulas for $\varphi_X$ hold also for $X = L^{(p,q,b)}$, but this is again a simple calculation using \ref{LEFF}, the Fatou property \ref{P3} of $\lVert \cdot \rVert_{(p,q,b)}$, and \ref{SV3}.
\end{proof}

\begin{theorem} \label{TFF(1)}
	Let $p, q, b$ be as in Definition~\ref{DLK} and assume that $p=1$ and that $\lVert t^{- \frac{1}{q}} b(t) \chi_{(1, \infty)}(t) \rVert_q < \infty$. Then the fundamental function $\varphi_X$ of $X = L^{(1,q,b)}$ is in the case $q<\infty$ given for $t \in [0, \infty]$ by the formulas
	\begin{align*} 
		\varphi_X(t) &\approx t \left ( \widehat{b^q}(t) \right )^{\frac{1}{q}} & \text{for } t < \infty, \\
		\varphi_X(t) &= \infty & \text{for } t = \infty,
	\end{align*}
	where $ \widehat{b^q}$ is defined as in \ref{LTb}, that is
	\begin{align*}
		\widehat{b^q}(t) &= \int_t^{\infty} s^{-1} b^q(s) \: ds;
	\end{align*}
	while in the case $q = \infty$ it is given for $t \in [0, \infty]$ by the formulas
	\begin{align*} 
		\varphi_X(t) &\approx t \hat{b}_{\infty}(t) & \text{for } t < \infty, \\
		\varphi_X(t) &= \infty & \text{for } t = \infty,
	\end{align*}
	where $\hat{b}_{\infty}(t)$ is defined as in \ref{LTb}, that is
	\begin{align*}
		\hat{b}_{\infty}(t) &= \esssup_{s \in (t, \infty)} b(s) &\textup{for } t \in (0, \infty).
	\end{align*}
	Thence it is in all cases equivalent to a quasi-concave function and the corresponding Lorentz and Marcinkiewicz endpoint spaces are given by
	\begin{align*}
		&\Lambda_{\varphi_X} = L^{1,1,a}, &M_{\varphi_X} = L^{(1, \infty, a)},
	\end{align*}
	up to equivalence of the defining functionals, where $a$ is the function given by
	\begin{align*}
		a &= \widehat{b^q}^{\frac{1}{q}} &\text{in the case } q<\infty, \\
		a &= \hat{b}_{\infty} &\text{in the case } q=\infty.
	\end{align*}
\end{theorem}

\begin{proof}
	We will prove the formula for $\varphi_X(t)$ only for the case when both $q < \infty$ and $t < \infty$, as the approach is similar for $q = \infty$ and $t < \infty$ while the extension to $t = \infty$ is exactly the same as in the proofs above. Under those assumptions a direct computation using \ref{LEFF} shows that
	\begin{equation*}
		\varphi_X(t) \approx t \left( b^q(t) + \widehat{b^q}(t) \right)^{\frac{1}{q}}.
	\end{equation*}
	Since we know from \ref{LTb} that $0 < b^q \lesssim \widehat{b^q}$, we may now simplify this expression to
	\begin{equation*}
		\varphi_X(t) \approx t \left ( \widehat{b^q}(t) \right )^{\frac{1}{q}},
	\end{equation*}
	as desired.
	
	Having obtained the formulas, we may now observe that $\varphi_X$ is in all cases equivalent to a quasi-concave function. This follows because the function $a$, as defined in the formulation of the theorem, is in all cases decreasing. We may thus use the same approach as in the proof of Theorem~\ref{TFFA*} (with $\psi = a$) to show that $\Lambda_{\varphi_X} = L^{1,1,a}$, while it again follows directly from the definitions that $M_{\varphi_X} = L^{(1, \infty, a)}$.
\end{proof}

Note that the assumption $\lVert t^{- \frac{1}{q}} b(t) \chi_{(1, \infty)}(t) \rVert_q < \infty$ in the theorem above is natural, because by Proposition~\ref{PP4} it is equivalent to the non-triviality of the space $L^{(1, q, b)}$.

Moreover, we would like to point out the interesting fact that the fundamental function of $L^{(1,q,b)}$ depends on the value of $q$. To obtain a concrete example, consider the s.v. function $b$ given by
\begin{equation}
	b(t) =
	\begin{cases}
		\log \left(  \frac{e}{t} \right) & \text{for } t \leq 1, \\
		\log^{-1} (et) & \text{for } t \geq 1.
	\end{cases}
\end{equation}
A simple calculation shows that, for this choice of $b$, the fundamental functions of the spaces $L^{(1,q,b)}$ with different values of $q$ are fundamentally different, i.e.~they are not equivalent up to a multiplicative constant. Here we of course consider only the choices of $q$ for which $\lVert t^{- \frac{1}{q}} b(t) \chi_{(1, \infty)}(t) \rVert_q < \infty$, i.e.~$q>1$.

\begin{theorem} \label{TFFE}
	Let $p, q, b$ be as in Definition~\ref{DLK} and assume that $p=\infty$ and that $\lVert t^{- \frac{1}{q}} b(t) \chi_{(0, 1)}(t) \rVert_q < \infty$. In the case when $q=\infty$ we further assume that $b$ is absolutely continuous and non-decreasing. Then the fundamental function $\varphi_X$ of $X = L^{\infty, q, b}$ is in the case $q<\infty$ given for $t \in [0, \infty]$ by the formulas
	\begin{align*} 
		\varphi_X(t) &= \left ( \widetilde{b^q}(t) \right )^{\frac{1}{q}} & \text{for } t < \infty, \\
		\varphi_X(t) &= \int_0^{\infty} s^{-1} b^q(s) \: ds & \text{for } t = \infty,
	\end{align*}
	where $ \widetilde{b^q}$ is defined as in \ref{LTb}, that is
	\begin{align*}
		\widetilde{b^q}(t) &= \int_0^{t} s^{-1} b^q(s) \: ds;
	\end{align*}
	while in the case $q = \infty$ it is given for $t \in [0, \infty]$ by the formulas
	\begin{align}
		\varphi_X(t) &= \tilde{b}_{\infty}(t) & \text{for } t < \infty, \label{TFFE1} \\
		\varphi_X(t) &= \esssup_{s \in (0, \infty)} b(s) & \text{for } t = \infty, \label{TFFE2}
	\end{align}
	where $\tilde{b}_{\infty}(t)$ is defined as in \ref{LTb}, that is
	\begin{align*}
		\tilde{b}_{\infty}(t) &= \esssup_{s \in (0, t)} b(s).
	\end{align*}
	Thence it is in all cases equivalent to a quasi-concave function and the corresponding Marcinkiewicz endpoint spaces is given by
	\begin{align*}
		M_{\varphi_X} = L^{(\infty, \infty, \varphi_X)},
	\end{align*}
	up to equivalence of the defining functionals. As for the Lorentz endpoint space, in the case when $q < \infty$ it is given by
	\begin{align*}
		\Lambda_{\varphi_X} = L^{\infty,1,a}
	\end{align*}
	 where $a$ is the function given by
	\begin{align*}
		a &= \widetilde{b^q}^{\frac{1}{q} - 1}b^q.
	\end{align*}
	In the remaining case $q=\infty$ the Lorentz endpoint space can be characterised as
	\begin{align*}
		\Lambda_{\varphi_X} &= \Lambda^1(b') & \text{in the case when } \lim_{t \to 0^+} b(t) = 0, \\
		\Lambda_{\varphi_X} &= \Lambda^1(b') \cap L^{\infty} & \text{in the case when } \lim_{t \to 0^+} b(t) > 0,
	\end{align*}
	where $b'$ denotes the classical derivative of $b$ and $\Lambda^1(b')$ is a classical Lorentz space.
	
	Moreover, if we denote $\tilde{X} = L^{(p,q,b)}$ then $\varphi_{\tilde{X}} \approx \varphi_X$ and thus the corresponding Lorentz and Marcinkiewicz endpoint spaces remain the same.
\end{theorem}

Note that the assumption that $b$ is absolutely continuous and non-decreasing comes at no loss of generality due to Theorem~\ref{TSmSV} and Proposition~\ref{PbND} (used in this order since the transformation $b \mapsto\tilde{b}_{\infty}$ preserves local Lipschitz continuity). Furthermore, it would be possible under this assumption to replace $\tilde{b}_{\infty}$ on the right-hand side of \eqref{TFFE1} by just $b$ and the essential supremum on the right-hand side of \eqref{TFFE2} by the limit at $\infty$, but we choose to present a formula that is valid regardless of those extra assumptions.
 
\begin{proof}
	The formula for $\varphi_X$ follows immediately from definition, we only need to keep in mind that we are considering a limiting case where we cannot provide a specific value at infinity (see Remark~\ref{RLC}). Furthermore, $\varphi_X$ is clearly equivalent to a quasi-concave function since it is in all cases increasing while the ratio $\frac{t}{\varphi_X(t)}$ is of the form $t \bar{a}$, where $\bar{a}$ is an s.v.~function, which implies that is is equivalent to a non-decreasing function.
	
	It now follows directly from the corresponding definitions that $M_{\varphi_X} = L^{(\infty, \infty, \varphi_X)}$. Note that as per Lemma~\ref{LSV} the function $\varphi_X$ is in all cases an s.v.~function and thus the notation $L^{(\infty, \infty, \varphi_X)}$ is correct. 
	
	On the other hand, in order to treat the Lorentz endpoint $\Lambda_{\varphi_X}$ we need to consider separately the cases $q<\infty$ and $q=\infty$. As for the former, it follows from our assumption $\lVert t^{- \frac{1}{q}} b(t) \chi_{(0, 1)}(t) \rVert_q < \infty$ that $\varphi_X$ is for $q \in (0, \infty)$ an absolutely continuous function. It is therefore differentiable $\lambda$-a.e.~on $(0, \infty)$ and the derivative is given for the appropriate $t \in (0,\infty)$ by
	\begin{align*}
		\varphi'_X(t) &= \left(  \int_0^{t} s^{-1} b^q(s) \: ds \right)^{\frac{1}{q} -1} t^{-1} b^q(t) = t^{-1}a(t).
	\end{align*}
	Since $a$ is an s.v.~function (see Lemma~\ref{LSV}), this is equivalent to some non-increasing function $\psi$. We thus obtain a concave function $\Psi$ such that
	\begin{align*}
		\Psi(t) &= \int_0^{t} \psi \: d\lambda \approx \int_0^{t} s^{-1}a(s) \: ds = \varphi_X(t) &\textup{for } t \in (0, \infty),
	\end{align*}
	which we may use to define the desired space $\Lambda_{\varphi_X}$, that is to say we put $\Lambda_{\varphi_X} = \Lambda_{\Psi}$. It now follows that it holds for every $f \in M$ that
	\begin{equation*}
	\lVert f \rVert_{\Lambda_{\Psi}} = \int_0^{\infty} f^* \: d\Psi = \int_0^{\infty} f^* \psi \: d\lambda \approx \int_0^{\infty} f^*(t) t^{-1}a(t) \: dt = \lVert f \rVert_{\infty,1,a}.
	\end{equation*}
	
	In the remaining case we assume that $\varphi_X = b$ is non-increasing and absolutely continuous, possibly except at zero. Hence we can easily describe the absolutely continuous (with respect to Lebesgue measure $\lambda$) part of the measure induced by it, while the singular part of this measure is just a multiple of the Dirac measure at zero. We can thus rewrite the Lebesgue--Stieltjes integral defining $\lVert \cdot \rVert_{\Lambda_{\varphi_{X}}}$ to obtain
	\begin{equation*}
		\lVert \cdot \rVert_{\Lambda_{\varphi_{X}}} = \left ( \lim_{t \to 0^+} b(t) \right) f^*(0) + \int_0^{\infty} f^* b' \: d\lambda.
	\end{equation*}
	Remembering that $f^*(0) = \lVert f \rVert_{\infty}$, we conclude that the desired characterisation indeed holds.
	
	Finally, to calculate the fundamental function of $L^{(p,q,b)}$ one uses approach almost identical to that employed in the proof of Theorem~\ref{TFF(1)}.
\end{proof}

Let us remark that the characterisation of $\Lambda_{\varphi_{X}}$ in the case when $X = L^{\infty, \infty, b}$ is rather unsatisfying, since unlike in the other cases treated in this section it does not provide any information about the structure of the weight in the space. This could be remedied if the conjecture presented in \cite[Remark~3.7]{PesaSV} were correct. If that were case, then the space $\Lambda_{\varphi_{X}}$ could again be characterised using Lorentz--Karamata spaces.

\subsection{Boyd indices} \label{SBI}
In this section we compute the Boyd indices of the Lorentz--Karamata spaces $L^{p,q,b}$ and $L^{(p,q,b)}$ for all choices of the parameters $p,q,b$ for which the respective spaces are non-trivial. The remaining cases are, of course, of no interest. We do not list the precise choices of parameters that satisfy this assumption because we do not want to clutter the formulation of the theorem. Instead, we refer the reader to Proposition~\ref{LQ} and Proposition~\ref{PP4}.

\begin{theorem} \label{TBI}
	Let $p, q, b$ be as in Definition~\ref{DLK}. Then the Boyd indices of $L^{p,q,b}$ and $L^{(p,q,b)}$ satisfy
	\begin{equation*}
		\underline{\alpha}_{L^{p,q,b}} = \overline{\alpha}_{L^{p,q,b}} = \underline{\alpha}_{L^{(p,q,b)}} = \overline{\alpha}_{L^{(p,q,b)}} = \frac{1}{p},
	\end{equation*}
	provided that the parameters $p,q,b$ are chosen in such a way that the space in question is non-trivial (i.e.~it contains a non-zero function).
\end{theorem}

\begin{proof}	
	In the proof we must work with the representation functional of the Lorentz--Karamata functional in question. As observed in Remark~\ref{RemRepSpace}, the representation functional always exists and it is given by the same formula as the original Lorentz--Karamata functional. We will, therefore, use for it the same notation already established for Lorentz--Karamata functionals. Observe also that it holds for any function $f \in M([0,\infty), \lambda)$ that
	\begin{equation*}
		(D_tf)^* = D_t (f^*).
	\end{equation*}
		
	We employ the characterisation of the indices contained in Proposition~\ref{PBI}. We will only compute $\underline{\alpha}_{L^{p,q,b}}$ as the approach is identical for both indices of $L^{p,q,b}$, while to modify it for the indices of $L^{(p,q,b)}$ one only needs to add one extra change of coordinates. We will also only cover the case $q < \infty$ as the modification for $q= \infty$ is simple. To this end, we assume that $t \in (0,1)$, fix any $f \in L^{p,q,b}$ and compute via a simple change of coordinates:
	\begin{equation*} \label{TBI1}
		\lVert D_{\frac{1}{t}}f \rVert_{p, q, b} = \left( \int_0^{\infty} s^{\frac{q}{p} -1} b^q(s) (f^*(t^{-1}s))^q \: ds \right)^{\frac{1}{q}} = t^{\frac{1}{p}} \left( \int_0^{\infty} r^{\frac{q}{p} -1} b^q(tr) (f^*(r))^q \: dr \right)^{\frac{1}{q}}
	\end{equation*}
	We may now fix any $\varepsilon > 0$ and use \ref{CSV} to see that there is a constant $C_{\varepsilon} \geq 1$, depending only on $\varepsilon$, such that
	\begin{equation*}
		\lVert D_{\frac{1}{t}}f \rVert_{p, q, b} \geq C_{\varepsilon}^{-1} t^{ \frac{1}{p} + \varepsilon} \left( \int_0^{\infty} r^{\frac{q}{p} -1} b^q(r) (f^*(r))^q \: dr \right)^{\frac{1}{q}} = C_{\varepsilon}^{-1} t^{ \frac{1}{p} + \varepsilon} \lVert f \rVert_{p,q,b}.
	\end{equation*}
	Hence, given our assumption that the space in question is non-trivial, the operator norm of $D_{\frac{1}{t}}$ as an operator $L^{p,q,b} \to L^{p,q,b}$ satisfies
	\begin{equation*}
		\left \lVert D_{\frac{1}{t}} \right \rVert \geq C_{\varepsilon}^{-1} t^{ \frac{1}{p} + \varepsilon}.
	\end{equation*}
	This implies the following estimate:
	\begin{equation*}
		\frac{\log\left(\left \lVert D_{\frac{1}{t}} \right \rVert\right)}{\log(t)} \leq \frac{-\log(C_{\varepsilon}) + \left( \frac{1}{p} + \varepsilon \right)\log(t) }{\log(t)}.
	\end{equation*}
    We may now combine this estimate with Proposition~\ref{PBI} to obtain
	\begin{equation*}
		\underline{\alpha}_{L^{p,q,b}} = \lim_{t \to 0^+} \frac{\log\left(\left \lVert D_{\frac{1}{t}} \right \rVert\right)}{\log(t)} \leq \frac{1}{p} + \varepsilon.
	\end{equation*}
	Since $\varepsilon$ was arbitrary, this establishes the estimate
	\begin{equation*}
		\underline{\alpha}_{L^{p,q,b}} \leq \frac{1}{p}.
	\end{equation*}
	The reverse estimate can be established in a similar manner, only using the second inequality in \eqref{CSV1} instead of the first.
\end{proof}

\subsection{Embeddings among the spaces $L^{p,q,b}$} \label{SSE}

Let us now turn our attention to the embeddings among Lorentz--Karamata spaces. In this section we will focus on those cases when both spaces participating in the embedding are of the form $L^{p,q,b}$, the remaining cases will be treated later. The first proposition is a generalisation of a result about embeddings between Lorentz spaces $L^{p,q}$, which can be found for example in \cite[Chapter~4, Proposition~4.2]{BennettSharpley88}.

\begin{proposition} \label{PELK}
	Let $b_1, b_2$ be two s.v.~functions such that $b_2 \lesssim b_1$, let $p \in (0, \infty]$ and suppose  $0 < q < r \leq \infty$. Then 
	\begin{equation}
	\lVert \cdot \rVert_{p, r, b_2} \lesssim \Vert \cdot \rVert_{p, q, b_1} \label{PELK1}
	\end{equation}
	or equivalently,
	\begin{equation*}
	L^{p, q, b_1} \hookrightarrow L^{p, r, b_2},
	\end{equation*}
	and the embedding is proper, in the sense that there is a function $f \in M$ such that $f \in L^{p, r, b_2}$ but $f \notin L^{p, q, b_1}$.
\end{proposition}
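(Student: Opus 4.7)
The plan is to derive a pointwise estimate on $f^*$ controlled by $\lVert f \rVert_{p,q,b_1}$ and then bound $\lVert f \rVert_{p,r,b_2}$ via a standard splitting trick. The key pointwise bound I aim for is
\begin{equation*}
t^{1/p} b_1(t) f^*(t) \lesssim \lVert f \rVert_{p,q,b_1} \quad \text{for every } t > 0;
\end{equation*}
note that $q < r \leq \infty$ forces $q < \infty$. To prove it, I use the monotonicity $f^*(s) \geq f^*(t)$ on $(0,t)$ to obtain $f^*(t)^q \int_0^t s^{q/p - 1} b_1(s)^q \, ds \leq \lVert f \rVert_{p,q,b_1}^q$. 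For $p < \infty$, property \ref{LEFF} applied to the s.v.\ function $b_1^q$ (which is s.v.\ by \ref{SV1}) gives $\int_0^t s^{q/p - 1} b_1(s)^q \, ds \approx t^{q/p} b_1(t)^q$, which yields the claim. For $p = \infty$, the analogue uses \ref{LTb} in the form $\widetilde{b_1^q}(t) \gtrsim b_1(t)^q$.

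Given this pointwise bound, the case $r = \infty$ follows immediately because $b_2 \lesssim b_1$. For $r < \infty$, I decompose $f^{*r} = f^{*(r-q)} \cdot f^{*q}$ and feed the pointwise bound into the first factor, obtaining
\begin{equation*}
\lVert f \rVert_{p,r,b_2}^r \lesssim \lVert f \rVert_{p,q,b_1}^{r-q} \int_0^\infty t^{q/p - 1} b_2(t)^r b_1(t)^{-(r-q)} f^*(t)^q \, dt.
\end{equation*}
Since $b_2 \lesssim b_1$ yields $b_2(t)^r b_1(t)^{-(r-q)} \lesssim b_1(t)^q$, the right-hand side is $\lesssim \lVert f \rVert_{p,q,b_1}^r$, establishing \eqref{PELK1}.

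For properness, it suffices to construct a single $f$ with $\lVert f \rVert_{p,r,b_1} < \infty$ and $\lVert f \rVert_{p,q,b_1} = \infty$, because the first property propagates to $b_2 \lesssim b_1$ via $\lVert f \rVert_{p,r,b_2} \lesssim \lVert f \rVert_{p,r,b_1}$, while the second already involves $b_1$. My candidate is an $f \in M$ whose rearrangement agrees, up to equivalence and passage to the non-increasing rearrangement, with $t \mapsto t^{-1/p} b_1(t)^{-1} (1 + \lvert \log t \rvert)^{-1/q} \chi_{(0, \delta)}(t)$ for some small $\delta > 0$; such an $f$ exists because $(R,\mu)$ is sigma-finite, non-atomic, and of infinite measure. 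A direct change of variables then gives $\lVert f \rVert_{p,q,b_1}^q \gtrsim \int_0^\delta t^{-1}(1+\lvert \log t \rvert)^{-1}\, dt = \infty$ (logarithmic divergence) and $\lVert f \rVert_{p,r,b_1}^r \lesssim \int_0^\delta t^{-1}(1+\lvert \log t \rvert)^{-r/q}\, dt < \infty$ because $r/q > 1$; for $r = \infty$ the outer integral is replaced by a supremum.

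The main obstacle I anticipate is ensuring that the proposed witness is genuinely non-increasing, especially in the regime $p = \infty$ where the polynomial factor $t^{-1/p}$ is missing and the monotonicity of $b_1(t)^{-1}(1+\lvert \log t \rvert)^{-1/q}$ depends delicately on $b_1$. I plan to handle this by working with the non-increasing rearrangement of the candidate and verifying via a change-of-variables argument that the resulting norms retain the same divergence/convergence behavior, or, failing that, by passing to a dyadic discretization. A secondary subtlety is the branching of the pointwise-bound proof between $p < \infty$ and $p = \infty$, which invokes \ref{LEFF} in the former case and \ref{LTb} in the latter.
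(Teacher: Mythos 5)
Your proposal is correct and follows essentially the same route as the paper: the pointwise bound $t^{1/p}b_1(t)f^*(t)\lesssim\lVert f\rVert_{p,q,b_1}$ via monotonicity of $f^*$ together with \ref{LEFF} (for $p<\infty$) or \ref{LTb} (for $p=\infty$), the splitting $f^{*r}=f^{*(r-q)}f^{*q}$ for finite $r$, and a logarithmically perturbed witness $f^*(t)\approx t^{-1/p}b_1^{-1}(t)(1+\lvert\log t\rvert)^{-1/q}\chi_{(0,\delta)}$ for properness. The monotonicity subtlety you flag for the witness when $p=\infty$ is real but is present (and left implicit) in the paper's proof as well.
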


\begin{proof}
	Fix arbitrary $f \in M$ and $q \in (0, \infty)$ and assume first that $r = \infty$. Then we may use either \ref{LEFF} or \ref{LTb}, in dependence on whether $p < \infty$ of $p = \infty$, and the fact that $f^*$ is non-increasing to compute
	\begin{equation} \label{PELK4}
	\begin{split}
	\lVert f \rVert_{p, \infty, b_2} &\lesssim \sup_{t \in (0, \infty)} t^{\frac{1}{p}} b_1(t) f^*(t) \\
	&\lesssim \sup_{t \in (0, \infty)} \left ( \int_0^{t} s^{\frac{q}{p} -1} b_1^q(s) \: ds \right )^{\frac{1}{q}} f^*(t)  \\
	&= \lVert f \rVert_{p, q, b_1}
	\end{split}
	\end{equation}
	which proves \eqref{PELK1} for $r = \infty$. Note that the finiteness of the expressions in question is of no concern even for $p = \infty$ since in the case when $b_1$ satisfies $\lVert t^{- \frac{1}{q}} b_1(t) \chi_{(0, 1)}(t) \rVert_q < \infty$ then it is also bounded on $(0, 1)$, which implies that $b_2$ is bounded on $(0, 1)$ too, while in the remaining case there is nothing to prove.
	
	We now use \eqref{PELK4} to compute for $r \in (q, \infty)$
	\begin{equation*}
	\begin{split}
	\lVert f \rVert_{p, r, b_2} &\lesssim \left ( \int_0^{\infty} s^{\frac{q}{p} -1} b_1^q(s) (f^*(s))^q \left [s^{\frac{1}{p}} b_2(s) (f^*(s)) \right ]^{r-q} \: ds \right )^{\frac{1}{r}} \\
	&\leq \lVert f \rVert_{p, \infty, b_2}^{\frac{r-q}{r}} \cdot \lVert f \rVert_{p, q, b_1}^{\frac{q}{r}} \\
	&\lesssim  \lVert f \rVert_{p, q, b_1},
	\end{split}
	\end{equation*}
	which together with \eqref{PELK4} proves \eqref{PELK1} for all the desired parameters $p, q, r, b_1, b_2$. Again, the finiteness of the expression is of no concern thanks to a similar argument as the one presented above.
	
	As for the second part of the statement, is suffices to consider some $f \in M(R, \mu)$ that satisfies $f^*(t) \approx t^{-\frac{1}{p}} \lvert \log(t) \rvert^{-\frac{1}{q}} b_1^{-1}(t) \chi_{(0, \min\{\mu(R), \, 1\})}(t)$.
\end{proof}

In the next theorem we present a full characterisation of embeddings between Lorentz--Karamata spaces. This result first appeared in \cite{Bathory18}, but we chose to present it here because the original formulation does not present the different conditions in their explicit form. We also chose to present its full proof since the original one fails to consider some of the cases and is at times quite vague. In our proof, we use the abstract theory of classical Lorentz spaces and the previous proposition to cover most of the cases, while in some of the remaining ones we employ ideas inspired by the work in \cite{Bathory18} and \cite{Neves02}.
\begin{theorem} \label{TELK}
	Let $b_1, b_2$ be two s.v.\ functions and let $p_1, p_2 \in (0, \infty]$ and $q_1, q_2 \in (0, \infty]$. If $p_i$ is infinite, for $i=1,2$, then we further suppose that the corresponding function $b_i$ satisfies $\lVert t^{- \frac{1}{q}} b_i(t) \chi_{(0, 1)}(t) \rVert_{q_i} < \infty$. We then have the following characterisation:
	\begin{enumerate}
		\item
		If $p_1 > p_2$ then the embedding $L^{p_1, q_1, b_1} \hookrightarrow L^{p_2, q_2, b_2}$ holds if and only if $\mu(R) < \infty$.
		\item 
		If $p_1 = p_2$ and $q_1 \leq q_2$ then the embedding $L^{p_1, q_1, b_1} \hookrightarrow L^{p_2, q_2, b_2}$ holds if and only if one of the following condition holds:
		\begin{enumerate}[label=(\alph*)]
			\item
			$p_1=p_2<\infty$ and $\frac{b_2}{b_1}$ is bounded on $(0, \mu(R))$,			
			\item
			$p_1 = p_2 = \infty$, $q_1, q_2 < \infty$, and the functions $b_1, b_2$ satisfy
			\begin{equation}
			\sup_{r \in (0, \mu(R))} \frac{\left ( \int_0^{r} t^{-1}b_2^{q_2}(t) \: dt \right )^{\frac{1}{q_2}}}{\left ( \int_0^{r} t^{-1}b_1^{q_1}(t) \: dt \right )^{\frac{1}{q_1}}} < \infty, \label{TELK0}
			\end{equation}
			\item
			$p_1 = p_2 = \infty$, $q_1 < \infty$, $q_2 = \infty$, and the functions $b_1, b_2$ satisfy
			\begin{equation}
			\sup_{r \in (0, \mu(R))} \frac{b_2(r)}{\left ( \int_0^{r} t^{-1}b_1^{q_1}(t) \: dt \right )^{\frac{1}{q_1}}} < \infty, \label{TELK0.1}
			\end{equation}
			\item
			$p_1 = p_2 = \infty$, $q_1 = q_2 = \infty$, and $\frac{b_2}{b_1}$ is bounded on $(0, \mu(R))$.
		\end{enumerate}
		\item 
		If $p_1 = p_2$, $q_1 > q_2$, and $r$ is the number satisfying $\frac{1}{r} = \frac{1}{q_2} - \frac{1}{q_1}$, then the embedding $L^{p_1, q_1, b_1} \hookrightarrow L^{p_2, q_2, b_2}$ holds if and only if one of the following condition holds:
		\begin{enumerate}[label=(\alph*)]
			\item
			$p_1=p_2<\infty$ and the functions $b_1, b_2$ satisfy
			\begin{equation}
			\int_0^{\mu(R)} t^{-1} \left ( \frac{b_2(t)}{b_1(t)} \right )^{r} \: dt < \infty, \label{TELK0.2}
			\end{equation}
			\item 
			$p_1 = p_2 = \infty$, $q_1 < \infty$, and the functions $b_1, b_2$ satisfy
			\begin{equation} \label{TELK00}
			\int_0^{\mu(R)} \left ( \frac{\int_0^{t} s^{-1}b_2^{q_2}(s) \: ds}{\int_0^{t} s^{-1}b_1^{q_1}(s) \: ds} \right )^{\frac{r}{q_1}} t^{-1}b_2^{q_2}(t) \: dt < \infty,
			\end{equation}
			\item
			$p_1=p_2 = \infty$, $q_1 = \infty$, and the functions $b_1, b_2$ satisfy
			\begin{equation*}
			\int_0^{\mu(R)} t^{-1} \left ( \frac{b_2(t)}{b_1(t)} \right )^{r} \: dt < \infty. \label{TELK0.3}
			\end{equation*}
		\end{enumerate}
		\item
		If $p_1 < p_2$ then the embedding $L^{p_1, q_1, b_1} \hookrightarrow L^{p_2, q_2, b_2}$ never holds.
	\end{enumerate}
\end{theorem}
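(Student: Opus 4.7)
The proof splits according to the relative sizes of $(p_1, p_2)$ and, when those agree, of $(q_1, q_2)$. My plan is first to dispose of the extreme cases (i) and (iv) by explicit construction and direct computation, then to reduce the core cases (ii) and (iii) to the classical theory of embeddings between weighted Lorentz spaces $\Lambda^{q}(w)$ on non-increasing functions (available in the papers of Sawyer, Carro--Soria, Stepanov, and Gogatishvili--Soudsk\'y cited in the introduction), and finally to simplify the generic weighted conditions into the explicit forms stated using the properties of slowly varying functions from Lemma~\ref{LSV}, principally \ref{LEFF} and \ref{LTb}.

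For case (iv), $p_1 < p_2$, I would construct a distinguishing $f$ with $f^*$ supported near zero of the form $f^*(t) \approx t^{-1/p_1} b_1^{-1}(t) (\log(1/t))^{-\alpha}$, with $\alpha$ chosen to place $f$ just inside $L^{p_1, q_1, b_1}$; the extra negative power $t^{1/p_2 - 1/p_1}$ arising in the $L^{p_2, q_2, b_2}$ integrand then forces divergence by \ref{SV4}. Case (i), $p_1 > p_2$, splits into necessity (when $\mu(R) = \infty$, construct an $f$ with $f^*$ supported on $[T, \infty)$ and tail $t^{-1/p_2} b_2^{-1}(t) (\log t)^{-1/q_2}$ and verify the two norms separately) and sufficiency (when $\mu(R) < \infty$, Proposition~\ref{PELK} reduces to $L^{p_1, \infty, b_1}$, yielding the pointwise bound $f^*(t) \lesssim \lVert f \rVert_{p_1, \infty, b_1} t^{-1/p_1} b_1^{-1}(t)$, so that the remaining integral over the bounded interval $(0, \mu(R))$ is finite by \ref{SV4}).

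The heart of the argument is the case $p_1 = p_2 = p$. Writing $\lVert f \rVert_{p, q_i, b_i}^{q_i} = \int_0^\infty (f^*)^{q_i} w_i$ with weight $w_i(t) = t^{q_i/p-1} b_i^{q_i}(t)$, the embedding becomes a weighted Lorentz space embedding on non-increasing functions. For $q_1 \leq q_2$ the classical criterion is boundedness of $W_2^{1/q_2}/W_1^{1/q_1}$, where $W_i(t) = \int_0^t w_i(s)\, ds$; necessity follows by testing with $\chi_{(0, r)}$ and sufficiency by a direct convexity argument. For $q_1 > q_2$ the criterion is the Hardy-type integral condition from \cite{Sawyer90} and \cite{CarroPick00}. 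The simplification to the explicit forms in the statement is then routine: when $p < \infty$, property \ref{LEFF} gives $W_i(t) \approx t^{q_i/p} b_i^{q_i}(t)$, whence $W_2^{1/q_2}/W_1^{1/q_1} \approx b_2/b_1$ (collapsing the $q_1 \leq q_2$ criterion to (ii)(a)) and the Hardy-type integral reduces to (\ref{TELK0.2}); when $p = \infty$ no such collapse occurs and the conditions remain in the forms (\ref{TELK0}), (\ref{TELK0.1}), and (\ref{TELK00}).

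The principal obstacle will be subcase (iii)(b), $p = \infty$ with $q_2 < q_1 < \infty$, where condition (\ref{TELK00}) must be extracted from the abstract weighted Lorentz embedding theorem applied to weights $w_i(t) = t^{-1} b_i^{q_i}(t)$ whose primitives are precisely the slowly varying companions $\tilde{b}_i^{q_i}$ of \ref{LTb}; here the finiteness of these companions must be verified as part of the argument and no slowly-varying shortcut is available. A secondary subtlety lies in the quasi-Banach range $\min\{q_1, q_2\} < 1$, which necessitates appeal to quasi-normed versions of the cited weighted Lorentz embedding theorems, and in the endpoint subcases (ii)(c), (ii)(d), and (iii)(c), which I would handle separately by direct computation using \ref{LEFF} and \ref{LTb} rather than through the general $\Lambda^q(w)$ machinery.
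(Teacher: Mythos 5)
Your overall strategy coincides with the paper's: explicit test functions for case (iv) and for the necessity part of case (i), reduction via Proposition~\ref{PELK} to $L^{p_1,\infty,b_1}$ for the sufficiency part of case (i), the Stepanov/Carro--Pick weighted Lorentz criteria for $p_1=p_2$ with $q_1,q_2<\infty$, simplification of the resulting weight conditions through \ref{LEFF} and \ref{LTb}, and separate elementary arguments for the infinite exponents (the paper makes exactly your point that the abstract $\Lambda^q(w)$ machinery would impose superfluous hypotheses on $b_1,b_2$ there). So the architecture is sound and matches the paper.

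One concrete step fails as written. For the necessity in case (i) when $\mu(R)=\infty$ you propose testing with $f^*(t)\approx t^{-1/p_2}b_2^{-1}(t)(\log t)^{-1/q_2}$ on $[T,\infty)$. When $q_2=\infty$ this degenerates to $t^{-1/p_2}b_2^{-1}(t)$, and then $\lVert f\rVert_{p_2,\infty,b_2}=\sup_{t\ge T}t^{1/p_2}b_2(t)f^*(t)\approx 1<\infty$, so the function lies in the target space and witnesses nothing. The fix is to calibrate the test function to the \emph{source} space rather than the target, as the paper does: take $f^*(t)\approx\chi_{(0,1)}(t)+t^{-1/p_1}\lvert\log t\rvert^{-2/q_1}b_1^{-1}(t)\chi_{[1,\infty)}(t)$; membership in $L^{p_1,q_1,b_1}$ follows from \ref{SV4} (or \ref{SV3} when $q_1=\infty$), while the factor $t^{1/p_2-1/p_1}$, whose exponent is strictly positive, forces $\lVert f\rVert_{p_2,q_2,b_2}=\infty$ for every $q_2\in(0,\infty]$. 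A secondary omission: the subcases falling outside your $\Lambda^{q}(w)$ reduction are not only (ii)(c), (ii)(d) and (iii)(c) but also (ii)(a) and (iii)(a) when $p_1=p_2<\infty$ and one of $q_1,q_2$ is infinite; these likewise need the direct treatment (Proposition~\ref{PELK} and testing with characteristic functions for the former; for the latter, the H\"older-type splitting of $\lVert f\rVert_{p_2,q_2,b_2}$ into $\bigl(\int_0^{\mu(R)}s^{-1}(b_2(s)/b_1(s))^{q_2}\,ds\bigr)^{1/q_2}$ times $\sup_{s}s^{1/p_2}b_1(s)f^*(s)$, which is exactly where condition \eqref{TELK0.2} enters), as carried out in the paper.
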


\begin{proof}
	We distinguish several cases.
	
	Suppose first that both $q_1, q_2 < \infty$. For this case, we will use \cite[Proposition~1]{Stepanov93} (see also \cite[Theorem~3.1]{CarroPick00} for further context). We further distinguish two subcases.
	
	Let us at first assume that $q_1 \leq q_2$. Then the first part of \cite[Proposition~1]{Stepanov93} (see also \cite{Sawyer90}) tells us that for some two positive weights $v, w$ the inequality
	\begin{equation*}
	\lVert w f^* \rVert_{q_2} \lesssim \lVert v f^* \rVert_{q_1} \label{TELK1}
	\end{equation*}
	holds for all $f \in M$ if and only if
	\begin{equation}
	\sup_{r \in (0, \infty)} \lVert w \cdot \chi_{(0, r)} \rVert_{q_2} \cdot ( \lVert v \cdot \chi_{(0, r)} \rVert_{q_1} )^{-1} < \infty. \label{TELK2} 
	\end{equation}
	Since the non-increasing rearrangement $f^*$ of any $f \in M(R, \mu)$ is supported on the set $[0, \mu(R)]$ and therefore $\lVert f \rVert_{p, q, b} = \lVert t^{\frac{1}{p}-\frac{1}{q}}b(t)\chi_{(0,\mu(R))}(t)f^*(t) \rVert_q$ for all $f \in M(R, \mu)$, we observe that the embedding in question is equivalent to \eqref{TELK2} with
	\begin{align*}
	v(t) &= t^{\frac{1}{p_1}-\frac{1}{q_1}}b_1(t)\chi_{(0,\mu(R))}(t) &\textup{for } t \in (0, \infty),  \\
	w(t) &= t^{\frac{1}{p_2}-\frac{1}{q_2}}b_2(t)\chi_{(0,\mu(R))}(t) &\textup{for } t \in (0, \infty).
	\end{align*}
	We first rewrite it to a more explicit form and get
	\begin{equation}
	\sup_{r \in (0, \mu(R))} \frac{\bigg ( \int_0^{r} t^{\frac{q_2}{p_2}-1}b_2^{q_2}(t) \: dt \bigg )^{\frac{1}{q_2}}}{\bigg ( \int_0^{r} t^{\frac{q_1}{p_1}-1}b_1^{q_1}(t) \: dt \bigg )^{\frac{1}{q_1}}} < \infty. \label{TELK4}
	\end{equation}
	Now, by considering separately the four cases distinguished by finiteness or infiniteness of $p_1$ and $p_2$ and by using \ref{SV1}, \ref{SV2}, \ref{LEFF}, \ref{LTb}, and \ref{SV3}, one obtains that \eqref{TELK4} holds if and only if one of the following conditions holds:
	\begin{enumerate}
		\item $p_1 > p_2$ and $\mu(R) < \infty$,
		\item $p_1 = p_2 < \infty$ and $\frac{b_2(r)}{b_1(r)}$ is bounded on $(0, \mu(R))$,
		\item $p_1 = p_2 = \infty$ and \eqref{TELK0} holds.
	\end{enumerate}
	
	We have thus covered the subcase $q_1 \leq q_2$ and it remains to consider the subcase $q_1 > q_2$. The approach is similar, only we use the second part of \cite[Proposition~1]{Stepanov93} and thus deduce that the estimate in question is equivalent to the convergence of the integral
	\begin{equation} \label{TELK5}
	\int_0^{\mu(R)} \left ( \frac{\int_0^{t} s^{\frac{q_2}{p_2}-1}b_2^{q_2}(s) \: ds}{\int_0^{t} s^{\frac{q_1}{p_1}-1}b_1^{q_1}(s) \: ds} \right )^{\frac{r}{q_1}} t^{\frac{q_2}{p_2}-1}b_2^{q_2}(t) \: dt.
	\end{equation}
	By a similar approach to that above, we deduce that \eqref{TELK5} holds if and only if one of the following conditions holds:
	\begin{enumerate}
		\item $p_1 > p_2$ and $\mu(R) < \infty$,
		\item $p_1 = p_2 < \infty$ and \eqref{TELK0.2} holds,
		\item $p_1 = p_2 = \infty$ and \eqref{TELK00} holds.
	\end{enumerate}
	
	We now turn our attention to the cases when at least one of the parameters $q_1, q_2$ is infinite. Those cases do not follow from the more abstract theory of classical Lorentz spaces as using this theory would force us to add unnecessary assumptions on the s.v.~functions $b_1, b_2$. We first show that the presented conditions are sufficient.
	
	The sufficiency in the case when $p_1 = p_2$, $q_1 \leq q_2 = \infty$ and $\frac{b_2}{b_1}$ is bounded on $(0, \mu(R))$ is covered by Proposition~\ref{PELK} (except the case when also $q_1 = q_2$ which is trivial). That the weaker condition \eqref{TELK0.1} suffices in the case when $p_1 = p_2 = \infty$, $q_1 < q_2 = \infty$ follows by modifying the first step of the proof of Proposition~\ref{PELK}, namely \eqref{PELK4}:	
	\begin{equation*} \label{TELK6}
	\begin{split}
	\lVert f \rVert_{\infty, \infty, b_2} &\lesssim \sup_{t \in (0, \infty)} \left ( \int_0^{t} s^{-1} b_1^{q_1}(s) \: ds \right )^{\frac{1}{q_1}} f^*(t)  \\
	&\leq \lVert f \rVert_{\infty, q_1, b_1}.
	\end{split}
	\end{equation*}

	As for the case when $p_1 = p_2$, $q_2 < q_1 = \infty$, the sufficiency follows from the calculation
	\begin{equation*}
	\begin{split}
	\lVert f \rVert_{p_2, q_2, b_2} &\leq \left ( \int_0^{\mu(R)} s^{-1} b_2^{q_2}(s) b_1^{-q_2}(s) \: ds \right )^{\frac{1}{q_2}} \cdot \sup_{s \in (0, \mu(R))} s^{\frac{1}{p_2}} b_1(s) f^*(s) \\
	&\lesssim \lVert f \rVert_{p_1, q_1, b_1},
	\end{split}
	\end{equation*}
	where the last estimate uses \eqref{TELK0.2} and which holds for any $f \in M$.
	
	It remains to show the sufficiency in the case when $p_1 > p_2$ and $\mu(R) < \infty$. We may assume that $q_1 = \infty$, since we know from Proposition~\ref{PELK} that $L^{p_1, q_1, b_1} \hookrightarrow L^{p_1, \infty, b_1}$. If $q_2 < \infty$, then the desired embedding follows from the calculation
	\begin{equation*}
	\begin{split}
	\lVert f \rVert_{p_2, q_2, b_2} &\leq \left ( \int_0^{\mu(R)} s^{\frac{q_2}{p_2}-\frac{q_2}{p_1}-1} b_2^{q_2}(s) b_1^{-q_2}(s) \: ds \right )^{\frac{1}{q_2}} \cdot \sup_{s \in (0, \mu(R))} s^{\frac{1}{p_1}} b_1(s) (f^*(s)) \\
	&\lesssim \lVert f \rVert_{p_1, \infty, b_1},
	\end{split}
	\end{equation*}
	where the last estimate holds for every $f \in M$ because the function $b_2^{q_2} b_1^{-q_2}$ is s.v. The case when $q_2 = \infty$ is similar.
	
	Finally, we show that the presented conditions for the cases when at least one of the parameters $q_1, q_2$ is infinite are also necessary. 
	
	The case when $p_1 = p_2$ is quite simple. In the subcase when also $q_1 \leq q_2$ it suffices to test the required inequality
	\begin{equation} \label{TELK7}
	\lVert \cdot \rVert_{p_2, q_2, b_2} \lesssim \lVert \cdot \rVert_{p_1, q_1, b_1}
	\end{equation}
	by functions $\chi_{E_t}$, where $t \in (0, \mu(R))$, $E_t \subseteq R$ and $\mu(E_t) = t$, and applying either \ref{LEFF} or monotonicity of $b_i$ as appropriate, depending on the finiteness or infiniteness of the parameters $p_i$ and $q_i$ (the monotonicity in the appropriate cases can be assumed thanks to Proposition~\ref{PbND}). In the remaining subcase when $q_1 > q_2$ we test the inequality \eqref{TELK7} by a function $f \in M(R, \mu)$ satisfying $f^*(t) \approx t^{-\frac{1}{p_1}}b_1(t)^{-1} \chi_{(0, \mu(R))}(t)$.
	
	In the case when $p_1 < p_2$ one can observe that the embedding does not hold by considering some $f \in M(R, \mu)$ such that  $f^*(t) \approx t^{-\frac{1}{p_1}} \lvert \log(t) \rvert^{-\frac{2}{q_1}} b_1^{-1}(t) \chi_{(0, \min\{\mu(R), \, 1 \})}(t)$.

	Similarly, in the case when $p_1 > p_2$ and $\mu(R) = \infty$ it is sufficient to consider some $f \in M(R, \mu)$ such that $f^*(t) \approx \chi_{(0, 1)} + t^{-\frac{1}{p_1}} \lvert \log(t) \rvert^{-\frac{2}{q_1}} b_1^{-1}(t) \chi_{[1, \infty)}(t)$.
\end{proof}
 
We conclude this section by presenting a corollary of Theorem~\ref{TELK} that fully characterises for which choices of the parameters $p,q,b$ does the functional $\lVert \cdot \rVert_{p,q,b}$ satisfy \ref{P5}. The result uses the abstract theory developed in \cite{Pesa22} to make rigorous the rather intuitive idea that the conditions for the validity of \ref{P5} should be the same as the conditions for the embedding $L^{p,q,b} \hookrightarrow L^1$ over the sets of finite measure.

\begin{corollary}\label{CP5b}
	Let $p, q, b$ be as in Definition~\ref{DLK}. Then $\lVert \cdot \rVert_{p, q, b}$ satisfies \ref{P5} if and only if one of the following conditions holds:
	\begin{enumerate}
		\item $p>1$;
		\item $p=1$, $q \leq 1$, and $b^{-1}$ is bounded on $(0, 1)$;
		\item $p=1$, $q>1$, and $\int_0^{1} t^{-1} b^{-q'} \: dt < \infty$.
	\end{enumerate}
\end{corollary}

\begin{proof}
	Assume first that $\mu(R) = \infty$. It then follows from the results contained in \cite{Pesa22} that $\lVert \cdot \rVert_{p, q, b}$ satisfies \ref{P5} if and only if it holds for every $f \in M([0,\infty), \lambda)$ that 
	\begin{equation}\label{CP5b1}
		 \lVert f^* \chi_{(0, 1)} \rVert_{1} \lesssim \lVert f^* \chi_{(0, 1)} \rVert_{p,q,b}.
	\end{equation}
	Here, the functional on the right-hand side is the representation functional of $\lVert \cdot \rVert_{p, q, b}$ which is given by the same formula and for which we therefore use the same notation (see Remark~\ref{RemRepSpace}).
	
	To be more specific, if we follow the notation established in \cite{Pesa22}, that is, we put for any pair of r.i.~quasi-Banach function spaces $A, B$ over $([0,\infty), \lambda)$ and any $f \in M([0,\infty), \lambda)$
	\begin{equation*}
			\lVert f \rVert_{WL(A, B)} = \lVert f^* \chi_{[0,1]} \rVert_A + \lVert f^* \chi_{(1, \infty)} \rVert_B
	\end{equation*}
 	and
 	\begin{equation*}
 		WL(A, B) = \{f \in M; \; \lVert f \rVert_{WL(A, B)} < \infty \},
 	\end{equation*}
 	then we may use \cite[Theorem~5.7]{Pesa22} to obtain that $\lVert \cdot \rVert_{p, q, b}$ as a functional over $([0,\infty), \lambda)$ satisfies \ref{P5} if and only if
 	\begin{equation} \label{CP5b2}
 		L^{p,q, b} \hookrightarrow WL(L^1, L^{p,q,b}).
 	\end{equation}
 	Here, all the spaces are over $([0,\infty), \lambda)$. Furthermore, $\lVert \cdot \rVert_{p, q, b}$ satisfies \ref{P5} as a functional over $([0,\infty), \lambda)$ if and only if it satisfies it as a functional over $(R, \mu)$. Indeed, one implication follows from the estimate
 	\begin{align*}
 		\int_E \lvert f \rvert \: d\mu &\leq \int_0^{\mu(E)} f^* \: d\lambda \lesssim  \lVert f^* \rVert_{p, q, b} = \lVert f \rVert_{p, q, b} &\textup{for } f \in M(R, \mu), 
 	\end{align*}
 	which holds provided that $\lVert \cdot \rVert_{p, q, b}$ satisfies \ref{P5} as a functional over $([0,\infty), \lambda)$. As for the remaining implication, if $\lVert \cdot \rVert_{p, q, b}$ does not satisfy \ref{P5} as a functional over $([0,\infty), \lambda)$ then it follows from \cite[Theorem~3.11]{NekvindaPesa20} that there is a function $f_0 \in M([0,\infty), \lambda)$ and a set $E \subseteq [0, \infty)$ such that $\lVert f_0 \rVert_{p, q, b} < \infty$ and $\lambda(E) < \infty$ while
 	\begin{equation*}
 		\infty = \int_E \lvert f_0 \rvert \: d\lambda \leq \int_0^{\mu(E)} f_0^* \: d\lambda.
 	\end{equation*}
 	By finding some $\tilde{E} \subseteq R$ such that $\mu(\tilde{E}) = \lambda(E)$, we may now use resonance of the space $(R, \mu)$ to obtain
 	\begin{equation*}
 		\sup_{\substack{f \in M(R, \mu) \\ f^* = f_0^*}} \int_{\tilde{E}} \lvert f \rvert \: d\mu = \int_0^{\lambda(E)} f_0^* \: d\lambda = \infty.
 	\end{equation*}
 	As all the functions $f$ considered in the supremum on the left-hand side clearly satisfy
 	\begin{equation*}
 		\lVert f \rVert_{p, q, b} = \lVert f_0 \rVert_{p, q, b} < \infty,
 	\end{equation*}
 	we must conclude that $\lVert \cdot \rVert_{p, q, b}$ does not satisfy \ref{P5} as a functional over $(R, \mu)$.
 	
 	Now, using \cite[Theorem~5.6]{Pesa22} we see that the embedding \eqref{CP5b2} is equivalent to the following one:
 	\begin{equation*}
 		WL(L^{p,q, b}, L^{\infty}) \hookrightarrow WL(L^1, L^{\infty}),
 	\end{equation*}
 	which is the same as the inequality
 	\begin{equation*}
		\lVert f \rVert_{WL(L^1, L^{\infty})} \lesssim \lVert f \rVert_{WL(L^{p,q, b}, L^{\infty})} 
 	\end{equation*}
 	holding for every $f \in M([0,\infty), \lambda)$. Finally, the estimate $(5)$ in \cite[Proposition~3.3]{Pesa22} applied on the both side of this inequality leads to \eqref{CP5b1}. We note that \cite[Proposition~3.3]{Pesa22} assumes that the space in question is a Banach function spaces, and indeed the presented proof of the estimate $(5)$ uses its property \ref{P5}, but this assumption is unnecessary for the validity of the estimate we need, because it can be just as easily proved using the method presented in the last step of the proof of \cite[Theorem~3.4]{Pesa22} and this method relies only on the property \ref{P4} and the second part of the property \ref{Q1}, both of which $\lVert \cdot \rVert_{p, q, b}$ satisfies by Proposition~\ref{LQ}.
	
	By further rewriting \eqref{CP5b1} as
	\begin{equation*}
		\int_0^{1} f^*(t) \: dt \lesssim \left( \int_0^{1} t^{\frac{q}{p}-1} b^q(t) (f^*(t))^q \: dt \right)^{\frac{1}{q}}
	\end{equation*}
	and remembering that for every function $f_0 \in M([0,1], \lambda)$ there is a function $f \in M([0,\infty), \lambda)$ such that $f^* = f_0^*$, we observe that the validity of \eqref{CP5b1} for every $f \in M([0,\infty), \lambda)$ is exactly the same question as the validity of the embedding $L^{p,q,b} \hookrightarrow L^1$ in the case when the underlying measure space is $([0,1], \lambda)$. Since $L^1 = L^{1,1,a}$, where $a$ is identically equal to one on $(0, \infty)$, we may now apply Theorem~\ref{TELK} to obtain the desired characterisation.

	To obtain the same conditions in the case when $\mu(R) < \infty$, one only needs to observe that the validity of \ref{P5} is in this situation trivially equivalent to the embedding $L^{p,q,b} \hookrightarrow L^1$ (both spaces are considered over $(R, \mu)$) and that the conditions characterising this embedding (as obtained in Theorem~\ref{TELK}) are equivalent to the desired ones thanks to the properties of s.v.~functions stated in Lemma~\ref{LSV}.
\end{proof}

\subsection{Relations between \texorpdfstring{$L^{p, q, b}$}{L\^p,q,b}  and \texorpdfstring{$L^{(p, q, b)}$}{L\^(p,q,b)}} \label{SSQN}

In this section we study when $L^{p, q, b}=L^{(p, q, b)}$, which is a property that often plays an important role in applications, see for example \cite{BasteroMilman03}, \cite{MilmanPustylnik04}, and \cite{Turcinova19}.

The first observation follows from Propositions~\ref{LQ} and \ref{LN} since they tell us that, for $p<1$, $L^{(p,q,b)}$ is always trivial while $L^{p,q,b}$ never is. So we obtain $p \geq 1$ as a necessary condition.

Note that because we always have for any $f \in M$ that $f^* \leq f^{**}$, we get trivially from properties of $\lVert \cdot \rVert_q$ that $\lVert \cdot \rVert_{p, q, b} \lesssim \lVert \cdot \rVert_{(p, q, b)}$ for all $p, q, b$ allowed by Definition~\ref{DLK}. Thus, we ``only'' have to examine what are the parameters for which $\lVert \cdot \rVert_{(p, q, b)} \lesssim \lVert \cdot \rVert_{p, q, b}$.

We will first show in the next theorem that $p>1$ is sufficient. To this end, we will employ weighted Hardy inequalities.

\begin{theorem}\label{TEQN}
	Let $p, q, b$ be as in Definition~\ref{DLK} and let $q \in (0, \infty]$ and $p \in (1, \infty]$. Then $\lVert \cdot \rVert_{(p, q, b)} \lesssim \lVert \cdot \rVert_{p, q, b}$.
\end{theorem}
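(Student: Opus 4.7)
The plan is to rewrite the desired inequality as
$$\left\lVert t^{\frac{1}{p} - \frac{1}{q} - 1} b(t) \int_0^t f^*(s) \, ds \right\rVert_q \lesssim \left\lVert t^{\frac{1}{p} - \frac{1}{q}} b(t) f^*(t) \right\rVert_q$$
and recognize it as a weighted Hardy inequality on the cone of non-increasing functions (taking $g = f^*$). I would split along the value of $q$: the endpoint $q = \infty$ is handled by a direct pointwise argument, while for $q < \infty$ the claim reduces to verifying an appropriate weight condition, which the assumption $p > 1$ makes accessible through \ref{LEFF}.

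For $q = \infty$, the approach is to factor
$$\int_0^t f^*(s)\, ds = \int_0^t \bigl[s^{1/p} b(s) f^*(s)\bigr]\, s^{-1/p} b(s)^{-1}\, ds,$$
pull out $\lVert f \rVert_{p, \infty, b} = \esssup_s s^{1/p} b(s) f^*(s)$, and estimate the remaining integral. Since $p > 1$, the exponent $\alpha = 1 - 1/p$ is strictly positive, so \ref{LEFF} applied to the s.v.~function $b^{-1}$ (which is s.v.~by \ref{SV1}) gives $\int_0^t s^{-1/p} b(s)^{-1}\, ds \approx t^{1-1/p} b(t)^{-1}$. Multiplying by $t^{1/p - 1} b(t)$ then cancels all surplus factors and produces the pointwise estimate $t^{1/p} b(t) f^{**}(t) \lesssim \lVert f \rVert_{p, \infty, b}$, which is what is needed.

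For $q < \infty$, the inequality is exactly the boundedness of the operator $g \mapsto g^{**}$ on the cone of non-increasing functions in the weighted Lebesgue space $L^q(w)$ with $w(t) = t^{q/p - 1} b(t)^q$. By the Ari\~no--Muckenhoupt type characterisation (see e.g.~\cite{ArinoMuckenhoupt90}, \cite{Sawyer90}, or \cite{Stepanov93}), this reduces to verifying the $B_q$-type condition
$$\int_r^\infty \frac{w(t)}{t^q}\, dt \lesssim \frac{1}{r^q} \int_0^r w(t)\, dt \qquad \text{for all } r > 0.$$
Using \ref{SV1} (so $b^q$ is s.v.) and \ref{LEFF}, the left-hand side is equivalent to $r^{q/p - q} b(r)^q$. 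For $p \in (1, \infty)$, a second application of \ref{LEFF} shows the right-hand side is also $\approx r^{q/p - q} b(r)^q$; for $p = \infty$ the inner integral becomes $\widetilde{b^q}(r)$ from \ref{LTb}, and the pointwise bound $b^q \lesssim \widetilde{b^q}$ (again from \ref{LTb}) is enough to dominate the left-hand side by the right-hand one.

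The main obstacle is the correct bookkeeping of cases around $p = \infty$, where \ref{LEFF} does not directly apply to the left endpoint integral and one must substitute \ref{LTb}; a secondary concern is invoking a weighted Hardy inequality that remains valid on the non-increasing cone for the full range $q \in (0, \infty)$, including $q < 1$. The hypothesis $p > 1$ is what is genuinely used: it produces the strict positivity of $q(1 - 1/p)$, without which the tail integral in the $B_q$ condition would diverge and \ref{LEFF} would fail.
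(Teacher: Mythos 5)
Your proof is correct, and it reaches the conclusion by a route that differs from the paper's in which Hardy-type inequality is invoked. For $q\in[1,\infty]$ the paper applies the \emph{unrestricted} two-weight Hardy inequality (Maz'ya), whose Muckenhoupt-type condition \eqref{TEQN1} is in general strictly stronger than boundedness on the decreasing cone but happens to hold when $p>1$; only for $q<1$ does the paper pass to the cone-restricted characterisation of Carro--Soria/Lai/Stepanov. You instead work on the decreasing cone throughout: for all $q<\infty$ you reduce to the single $B_q$ condition of Ari\~no--Muckenhoupt type (which, as you note, is exactly the paper's condition \eqref{T2EQN2} once the trivial equal-weights condition \eqref{T2EQN1} is discarded), and for $q=\infty$ you give a self-contained pointwise estimate of $f^{**}$ via \ref{LEFF} rather than citing the $q=\infty$ case of the weighted Hardy inequality. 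Both verifications of the weight conditions are the same computation with \ref{LEFF} and \ref{LTb}, and your identification of $q(1-1/p)>0$ as the point where $p>1$ enters matches the paper (indeed, for $p=1$ the failure of $b^q\approx\widehat{b^q}$ recorded in \eqref{LTb1} is precisely what Theorem~\ref{T3EQN} exploits). What each approach buys: yours is more uniform in $q$ and more elementary at the endpoint $q=\infty$; the paper's shows in passing that for $q\ge 1$ the Hardy inequality holds without any monotonicity restriction on the integrand, which is slightly more information. One small point worth making explicit in your $p=\infty$, $q<\infty$ case: if $\widetilde{b^q}\equiv\infty$ the $B_q$ inequality is vacuous, and otherwise one should note (as the paper does) that $L^{\infty,q,b}$ is nontrivial precisely when this integral is finite near zero, so that \ref{LTb} applies; this is bookkeeping rather than a gap.
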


\begin{proof}
	Consider first the case when $q \geq 1$.	We employ a weighted Hardy inequality, which can be found for example in \cite[Section~1.3, Theorem~2]{Maz'ya11} and which yields that the desired conclusion is in fact equivalent to the following condition:
	\begin{equation} \label{TEQN1}
	\sup_{r \in (0, \infty)} \left \lVert t^{\frac{1}{p}-\frac{1}{q} -1}b(t)\chi_{(r, \infty)}(t) \right \rVert_q \cdot \left \lVert \left (t^{\frac{1}{p}-\frac{1}{q}}b(t) \right )^{-1}\chi_{(0, r)}(t) \right \rVert_{q'} < \infty.
	\end{equation}
	
	Firstly, note that both the norms inside the supremum are, thanks to either \ref{SVC} and \ref{SV3} or \ref{SV4}, in dependence on the value of $q$ and $q'$, always finite by our assumption $p \in (1, \infty]$. 
	
	Secondly, since we assume $p>1$ we can use \ref{LEFF} to show that product inside the supremum in \eqref{TEQN1} is on $(0, \infty)$ equivalent to a constant. The condition is thus satisfied.

	Consider now the remaining case $q < 1$. This time we utilise a weighted Hardy-type inequality which has been obtained independently by Carro and Soria in \cite[Proposition~2.6b]{CarroSoria93}, Lai in \cite[Theorem~2.2]{Lai93}, and Stepanov in \cite[Theorem~3b]{Stepanov93} (see also \cite[Theorem~4.1]{CarroPick00} for context as well as a formulation more similar to the problem at hand). From this result, we obtain that the desired inequality holds if and only if the following two conditions hold:
	\begin{align}
		\sup_{r \in (0, \infty)} \bigg ( \int_0^r t^{\frac{q}{p}-1}b^q(t) \: dt \bigg )^{\frac{1}{q}} \cdot \bigg ( \int_0^r t^{\frac{q}{p}-1}b^q(t) \: dt \bigg )^{-\frac{1}{q}} &< \infty, \label{T2EQN1} \\
	\sup_{r \in (0, \infty)} r \cdot \bigg ( \int_r^{\infty} t^{\frac{q}{p}-q-1}b^q(t) \: dt \bigg )^{\frac{1}{q}} \cdot \bigg ( \int_0^r t^{\frac{q}{p}-1}b^q(t) \: dt \bigg )^{-\frac{1}{q}}  &< \infty. \label{T2EQN2}
	\end{align}
	
	As for the condition \eqref{T2EQN1}, we may assume that the integrals inside the supremum are finite. Indeed, if $p \in (1, \infty)$ this follows from \ref{SV4} while in the remaining case we can assume it since otherwise $L^{p,q,b} = L^{(p,q,b)} = \{0\}$ and there is nothing to prove. Therefore, the function we take supremum of in \eqref{T2EQN1} is identically equal to $1$ on $(0, \infty)$ and the condition is satisfied.
	
	It remains to verify \eqref{T2EQN2} which follows from \ref{LEFF} when $p< \infty$ and from \ref{LEFF} and \ref{LTb} when $p=\infty$.
\end{proof}

What now remains is to consider the case $p=1$. We shall show that in this case the equality never holds. The reason behind this behaviour is \eqref{LTb1}, more precisely the relation between $b^q$ and $\widehat{b^q}$ (for $q< \infty$) and between $b^{-1}$ and $\widetilde{b^{-1}}$ (for $q = \infty$), where $\widehat{b^q}$ and $\widetilde{b^{-1}}$ are the functions defined in \ref{LHb} (for $b^q$ and $b^{-1}$, respectively), that is
\begin{align*}
\widehat{b^q}(t) &= \int_t^{\infty} s^{-1} b^q(s) \: ds, & \widetilde{b^{-1}}(t) &= \int_0^{t} s^{-1} b^{-1}(s) \: ds &\textup{for } t \in (0, \infty).
\end{align*}

We begin with the case $q=1$ which is worth singling out.

\begin{proposition} \label{PEbbH}
	Let $p, q, b$ be as in Definition~\ref{DLK} and suppose $q = p = 1$ and that $b$ satisfies $\lVert t^{- 1} b(t) \chi_{(1, \infty)}(t) \rVert_1 < \infty$. Then 
	\begin{equation}
	L^{(1,1,b)} = L^{1, 1, \hat{b}}, \label{PEbbH1}
	\end{equation}
	where $\hat{b}$ is the function defined in \ref{LHb}. Consequently, $L^{(1,1,b)} \neq L^{1,1,b}$ for any s.v.~function $b$.
\end{proposition}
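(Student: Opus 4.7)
The core of the proof is a straightforward application of the Fubini--Tonelli theorem to swap the order of integration in the double integral defining $\lVert \cdot \rVert_{(1,1,b)}$, so I expect essentially no obstacle for the main equality \eqref{PEbbH1}.

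In detail, fix $f \in M$ and write out the definition:
\begin{equation*}
\lVert f \rVert_{(1,1,b)} = \int_0^{\infty} b(t) f^{**}(t) \, dt = \int_0^{\infty} b(t) t^{-1} \int_0^{t} f^*(s) \, ds \, dt.
\end{equation*}
Since all the integrands are non-negative and measurable, Tonelli's theorem applies and I can interchange the order of integration:
\begin{equation*}
\int_0^{\infty} \int_0^{t} b(t) t^{-1} f^*(s) \, ds \, dt = \int_0^{\infty} f^*(s) \int_s^{\infty} b(t) t^{-1} \, dt \, ds = \int_0^{\infty} f^*(s) \hat{b}(s) \, ds,
\end{equation*}
which is precisely $\lVert f \rVert_{1,1,\hat{b}}$ (by the very definition of $\hat{b}$ in \ref{LHb}). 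Note that $\hat{b}$ is finite $\lambda$-a.e.\ by the standing assumption $\lVert t^{-1}b(t)\chi_{(1,\infty)}(t)\rVert_1<\infty$ together with \ref{SV4} applied on $(0,1)$, so $\hat{b}$ is a well-defined s.v.~function by \ref{LTb} and $\lVert \cdot \rVert_{1,1,\hat{b}}$ makes sense. This establishes \eqref{PEbbH1} with equal (not merely equivalent) functionals.

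For the second assertion, I would use Theorem~\ref{TELK} applied to the pair of Lorentz--Karamata functionals $\lVert \cdot \rVert_{1,1,b}$ and $\lVert \cdot \rVert_{1,1,\hat{b}}$. Since $b \lesssim \hat{b}$ (from \ref{LTb}), the embedding $L^{1,1,\hat{b}} \hookrightarrow L^{1,1,b}$ always holds by Proposition~\ref{PELK}, so to get $L^{1,1,b}\neq L^{1,1,\hat{b}}$ it suffices to refute the reverse embedding. By part (ii)(a) of Theorem~\ref{TELK} (with $p_1=p_2=q_1=q_2=1$), the embedding $L^{1,1,b} \hookrightarrow L^{1,1,\hat{b}}$ would require $\hat{b}/b$ to be bounded on $(0,\mu(R))=(0,\infty)$. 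But \eqref{LTb1} in property~\ref{LTb} tells us that
\begin{equation*}
\limsup_{t \to 0^+} \frac{\hat{b}(t)}{b(t)} = \infty,
\end{equation*}
contradicting any such boundedness. Hence the reverse embedding fails, and combining with \eqref{PEbbH1} we conclude $L^{(1,1,b)} = L^{1,1,\hat{b}} \neq L^{1,1,b}$.

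The whole argument is essentially two lines of routine manipulation: Fubini for the equality and an invocation of the embedding characterisation together with \eqref{LTb1} for the inequality. The only subtle point is making sure the Fubini step is genuinely justified (non-negativity, which is automatic) and that $\hat{b}$ is a bona fide s.v.~function so that $L^{1,1,\hat{b}}$ is meaningful as a Lorentz--Karamata space, but both of these are immediate consequences of results already recorded in Lemma~\ref{LSV}.
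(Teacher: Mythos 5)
Your argument is correct and follows essentially the same route as the paper: Fubini--Tonelli yields \eqref{PEbbH1}, and the embedding characterisation of Theorem~\ref{TELK} combined with \eqref{LTb1} rules out $L^{1,1,b}=L^{1,1,\hat{b}}$. One small nitpick: the finiteness of $\hat{b}$ on $(0,1)$ should be justified via \ref{SVC} (local boundedness of $b$ on compact subsets of $(0,\infty)$) rather than \ref{SV4}, which explicitly excludes the exponent $\alpha=-1$; this does not affect the validity of the proof.
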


\begin{proof}
	The conclusion \eqref{PEbbH1} follows from Definition~\ref{DLK} by simple calculation, using only the classical Fubini's theorem. 
	
	For the remaining part, it follows from Theorem~\ref{TELK} that $L^{1, 1, \hat{b}} = L^{1,1,b}$ if and only if $b \approx \hat{b}$ on $(0, \infty)$, which can never hold as follows from \eqref{LTb1}.
\end{proof}

For the remaining cases we will have to use some abstract theory.

\begin{theorem}\label{T3EQN}
	Let $p, q, b$ be as in Definition~\ref{DLK} and suppose $p = 1$. Then $L^{(1,1,b)} \neq L^{1,1,b}$.
\end{theorem}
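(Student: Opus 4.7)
The strategy is to first dispense with a trivial subcase, and then in the non-trivial one invoke the open mapping theorem to reduce the question to equivalence of the two quasinorms, which I refute by testing against concretely chosen functions. The case $q=1$ is already in Proposition~\ref{PEbbH}, but the argument below works uniformly in $q$.

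First, if the condition from Proposition~\ref{PP4}, namely $\lVert t^{-1/q}b(t)\chi_{(1,\infty)}(t)\rVert_q<\infty$, fails, then $L^{(1,q,b)}=\{0\}$ while $L^{1,q,b}$ is always non-trivial (since $p=1<\infty$) by Proposition~\ref{LQ}, so there is nothing to prove. Otherwise, both $L^{(1,q,b)}$ and $L^{1,q,b}$ are r.i.~quasi-Banach function spaces, and in particular F-spaces, the completeness coming from the Fatou property. The pointwise bound $f^*\leq f^{**}$ combined with the monotonicity of $\lVert\cdot\rVert_q$ gives the continuous embedding $L^{(1,q,b)}\hookrightarrow L^{1,q,b}$; if the two sets coincided, the open mapping theorem would force the reverse inequality $\lVert\cdot\rVert_{(1,q,b)}\lesssim\lVert\cdot\rVert_{1,q,b}$, which I shall contradict.

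For $q<\infty$ I would test with characteristic functions $\chi_{E_{t_0}}$ of sets of measure $t_0>0$ (such sets exist by non-atomicity of $\mu$ together with $\mu(R)=\infty$). By \ref{LEFF}, $\lVert\chi_{E_{t_0}}\rVert_{1,q,b}\approx t_0 b(t_0)$; from $(\chi_{E_{t_0}})^{**}(t)=\min\{1,t_0/t\}$ a short computation and again \ref{LEFF} give
\begin{equation*}
\lVert\chi_{E_{t_0}}\rVert_{(1,q,b)}^{q}\approx \int_0^{t_0} t^{q-1}b^q(t)\,dt + t_0^{q}\int_{t_0}^{\infty} t^{-1}b^q(t)\,dt\approx t_0^{q}\,\widehat{b^q}(t_0),
\end{equation*}
where the last step uses $\widehat{b^q}\gtrsim b^q$ from \ref{LHb}. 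Hence the ratio of the two quasinorms on this family is $\approx(\widehat{b^q}(t_0))^{1/q}/b(t_0)$, which is unbounded as $t_0$ ranges over $(0,\infty)$ by applying \eqref{LTb1} to the slowly varying function $b^q$ (s.v.\ by \ref{SV1}).

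For $q=\infty$ the characteristic-function test becomes inadequate, as $\hat b_\infty(t_0)/b(t_0)$ can remain bounded; this is the main obstacle, since the source of the blow-up must shift from the right tail of $b^q$ to the near-zero behaviour of $b^{-1}$. I would instead test with $f^*(t)\approx(tb(t))^{-1}\chi_{(0,T)}(t)$, which is equivalent to a non-increasing function because $tb(t)$ is itself equivalent to a non-decreasing one by Definition~\ref{DSV} with $\varepsilon=1$. Then $\lVert f\rVert_{1,\infty,b}\approx 1$, whereas $\lVert f\rVert_{(1,\infty,b)}\geq \sup_{t<T}b(t)\,\widetilde{b^{-1}}(t)=\infty$ by \eqref{LTb1} applied to the s.v.~function $b^{-1}$ (the degenerate case $\widetilde{b^{-1}}\equiv\infty$ on $(0,T)$ gives the conclusion directly since then $f^{**}\equiv\infty$).
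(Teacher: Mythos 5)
Your proof is correct, but it follows a genuinely different route from the paper. The paper treats each regime of $q$ by quoting an abstract characterization of the corresponding weighted inequality --- Fubini plus Theorem~\ref{TELK} for $q=1$, Sawyer's theorem for $q\in(1,\infty)$, the Carro--Soria/Lai/Stepanov result for $q\in(0,1)$, and Gogatishvili--Soudsk\'y for $q=\infty$ --- and then refutes the resulting weight condition via \eqref{LTb1}. You instead reduce equality of the sets to equivalence of the quasinorms by the open mapping theorem (legitimate here: both spaces are complete by the Fatou property, as the paper notes via \cite[Lemma~3.6]{CaetanoGogatishvili16}, hence F-spaces after Aoki--Rolewicz), and then disprove $\lVert\cdot\rVert_{(1,q,b)}\lesssim\lVert\cdot\rVert_{1,q,b}$ by explicit test functions; your computation $\lVert\chi_{E_{t_0}}\rVert_{(1,q,b)}^q\approx t_0^q\,\widehat{b^q}(t_0)$ versus $\lVert\chi_{E_{t_0}}\rVert_{1,q,b}^q\approx t_0^q b^q(t_0)$ is exactly the quantity that Sawyer's condition \eqref{T3EQN1} measures, so your characteristic-function test detects the same obstruction directly and uniformly for all $q\in(0,\infty)$, while your choice $f^*(t)\approx(tb(t))^{-1}\chi_{(0,T)}(t)$ for $q=\infty$ even exhibits an explicit element of $L^{1,\infty,b}\setminus L^{(1,\infty,b)}$, making the open mapping step unnecessary in that case. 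What the paper's route buys is brevity and the fact that the cited theorems give necessity \emph{and} sufficiency in one stroke; what yours buys is self-containedness (no appeal to the classical Lorentz space literature for the necessity direction) and a single argument covering all finite $q$ at once. The only caveat is that for $q<\infty$ your argument establishes non-equivalence of the quasinorms and genuinely needs the completeness/open-mapping step to pass to inequality of the underlying sets, so that step should be stated as part of the proof rather than as a remark; also note that the statement as printed concerns only $q=1$ (presumably a typo for general $q$), which both your argument and the paper's proof in fact cover.
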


\begin{proof}
	The case $q=1$ has already been covered in the previous proposition.
	
	If $q \in (1, \infty)$, then it follows from \cite[Theorem~4]{Sawyer90} that $L^{(1,q,b)} = L^{1,q,b}$ if and only if
	\begin{equation} \label{T3EQN1}
		\int_t^{\infty} s^{-1} b^q(s) \: ds \lesssim \frac{1}{t^q} \int_0^t s^{q-1} b^q(s) \: ds
	\end{equation}
	on $(0,\infty)$. However, by \ref{LEFF}, the right-hand side of \eqref{T3EQN1} is equivalent to $b^q(t)$ while the left-hand side is just $\widehat{b^q}$. Hence, \eqref{T3EQN1} holds if and only if $b^q \approx \widehat{b^q}$ on $(0, \infty)$, which can never hold as follows from \eqref{LTb1}.

	For the case $q \in (0,1)$ we use the same result that we used when proving the second part of Theorem~\ref{TEQN} and that has been obtained independently in \cite[Proposition~2.6b]{CarroSoria93}, \cite[Theorem~2.2]{Lai93}, and \cite[Theorem~3b]{Stepanov93} and which can be also found in \cite[Theorem~4.1]{CarroPick00}. It follows from this result that in order for $L^{(1,q,b)} = L^{1,q,b}$ to hold it is necessary and sufficient that the conditions \eqref{T2EQN1} and \eqref{T3EQN1} hold. Now, while \eqref{T2EQN1} always holds by the same argument as in the proof of Theorem~\ref{TEQN}, the second condition \eqref{T3EQN1} is never satisfied as follows from \eqref{LTb1}.

	Finally if $q=\infty$, it follows from \cite[Theorem~5.1]{GogatishviliSoudsky14} and \ref{LEFF} that $L^{(1,\infty,b)} = L^{1,\infty,b}$ if and only if the function $b \widetilde{b^{-1}}$ is bounded on $(0, \infty)$, which is in turn equivalent to $b^{-1} \approx \widetilde{b^{-1}}$ on $(0, \infty)$ and thus not true for any s.v.~function $b$ (again, by \ref{LTb1}).
\end{proof}

The following corollary collects these results into a single statement.

\begin{corollary} \label{CEQNa}
	Let $p, q, b$ be as in Definition~\ref{DLK}. Then $L^{(p,q,b)} = L^{p,q,b}$ if and only if $p>1$.
\end{corollary}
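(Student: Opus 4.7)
The plan is to assemble the three ingredients already established in this section, namely the trivial direction $\lVert\cdot\rVert_{p,q,b}\lesssim\lVert\cdot\rVert_{(p,q,b)}$, the sufficiency result of Theorem~\ref{TEQN}, and the negative result of Theorem~\ref{T3EQN}, together with the opening observation of Section~\ref{SSQN} about $p<1$.

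For sufficiency, I would argue as follows. Assume $p>1$. The pointwise inequality $f^*\le f^{**}$ combined with the lattice property of $\lVert\cdot\rVert_q$ yields $\lVert\cdot\rVert_{p,q,b}\lesssim\lVert\cdot\rVert_{(p,q,b)}$, so $L^{(p,q,b)}\subseteq L^{p,q,b}$. The reverse embedding is exactly the content of Theorem~\ref{TEQN}, which gives $\lVert\cdot\rVert_{(p,q,b)}\lesssim\lVert\cdot\rVert_{p,q,b}$ for all $q\in(0,\infty]$ once $p>1$. Combining the two inequalities yields equivalence of the defining functionals, and in particular coincidence of the underlying sets of functions.

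For necessity, I would split on whether $p<1$ or $p=1$. If $p<1$, the observation recorded at the start of Section~\ref{SSQN} applies: Proposition~\ref{LN} forces $L^{(p,q,b)}=\{0\}$ (since $p<1$ violates all three alternatives listed there), while Proposition~\ref{LQ} shows that $L^{p,q,b}$ is a nontrivial r.i.\ quasi-Banach function space whenever $p<\infty$. Hence the two spaces cannot coincide. If $p=1$, then Theorem~\ref{T3EQN}, which covers every $q\in(0,\infty]$, directly asserts $L^{(1,q,b)}\ne L^{1,q,b}$, finishing the argument.

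There is essentially no obstacle here, since all the hard analytic work (the weighted Hardy inequalities, the Sawyer-type characterisations, and the uses of \ref{LTb1}) has already been carried out in Theorems~\ref{TEQN} and~\ref{T3EQN}. The only thing to be mildly careful about is stating the necessity cleanly: one should note explicitly that the case $p<1$ is excluded by a pure non-triviality discrepancy rather than by a failure of the Hardy-type estimate, so the corollary follows by a simple case distinction rather than a unified argument.
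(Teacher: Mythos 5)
Your proof is correct and takes essentially the same route as the paper: Corollary~\ref{CEQNa} is indeed just the assembly of the trivial inequality $f^*\le f^{**}$, Theorem~\ref{TEQN} for sufficiency, Theorem~\ref{T3EQN} for the case $p=1$, and the non-triviality discrepancy from Propositions~\ref{LQ} and \ref{LN} for $p<1$. No further argument is needed.
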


In conclusion of this section, we present two consequences of Corollary~\ref{CEQNa} which extend some properties of $\lVert \cdot \rVert_{(p, q, b)}$ to $\lVert \cdot \rVert_{p, q, b}$. The second corollary is much less interesting than the first, as its content is already contained in Corollary~\ref{CP5b}, but we believe it worth stating nonetheless because we have obtained the result by much different means.

\begin{corollary} \label{CEQN}
	Let $p, q, b$ be as in Definition~\ref{DLK}, suppose that $p>1$ and that $q \in [1, \infty]$. Then $\lVert \cdot \rVert_{p, q, b}$ is equivalent to an r.i.\ Banach function norm $\lVert \cdot \rVert_{(p, q, b)}$ and the space $L^{p, q, b} = L^{(p, q, b)}$ equipped with this norm is an r.i.\ Banach function space.
\end{corollary}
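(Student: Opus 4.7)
The plan is to observe that this corollary is essentially just a synthesis of results already established in the section, so the proof amounts to assembling them correctly rather than producing any genuinely new argument.

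First I would note that by the trivial pointwise inequality $f^* \leq f^{**}$, we always have $\lVert \cdot \rVert_{p,q,b} \leq \lVert \cdot \rVert_{(p,q,b)}$ (this was observed just before Theorem~\ref{TEQN}). Since $p > 1$, Theorem~\ref{TEQN} supplies the reverse estimate $\lVert \cdot \rVert_{(p,q,b)} \lesssim \lVert \cdot \rVert_{p,q,b}$, so the two functionals are equivalent. Consequently the underlying sets $L^{p,q,b}$ and $L^{(p,q,b)}$ coincide (which of course also follows directly from Corollary~\ref{CEQNa}).

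Next I would invoke Proposition~\ref{LN}: since $p \in (1, \infty)$ falls into case (i) there (or $p = \infty$ with the required convergence condition on $b$, which is subsumed in our standing hypotheses on $\lVert \cdot \rVert_{p,q,b}$ being well-defined), and since $q \in [1, \infty]$, the functional $\lVert \cdot \rVert_{(p,q,b)}$ is an r.i.\ Banach function norm, and $L^{(p,q,b)}$ endowed with it is an r.i.\ Banach function space. Combining this with the equivalence from the previous paragraph yields the stated conclusion.

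There is no real obstacle here; the only point requiring a little care is making sure the hypothesis $p > 1$ and $q \in [1,\infty]$ is exactly what is needed both to apply Theorem~\ref{TEQN} (which needs $p > 1$, any $q$) and to apply Proposition~\ref{LN} (which needs $q \in [1,\infty]$ so that $\lVert \cdot \rVert_q$ and $f \mapsto f^{**}$ are both subadditive, giving the triangle inequality \ref{P1}(c)). Both conditions are given, so the argument goes through with no additional work.
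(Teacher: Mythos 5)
Your argument is correct and is exactly the route the paper intends: it presents Corollary~\ref{CEQN} as an immediate consequence of Theorem~\ref{TEQN} (giving $\lVert \cdot \rVert_{(p,q,b)} \approx \lVert \cdot \rVert_{p,q,b}$ for $p>1$, the reverse inequality being trivial from $f^* \leq f^{**}$) combined with Proposition~\ref{LN} (giving that $\lVert \cdot \rVert_{(p,q,b)}$ is an r.i.\ Banach function norm for $q \in [1,\infty]$). Your parenthetical care about the $p=\infty$ case needing $\lVert t^{-1/q} b(t) \chi_{(0,1)}(t) \rVert_q < \infty$ is a reasonable reading of the statement and consistent with how the paper handles that case in Theorem~\ref{TLKBFS}.
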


\begin{corollary}
	Let $p, q, b$ be as in Definition~\ref{DLK} and suppose that $p>1$. Then $\lVert \cdot \rVert_{p, q, b}$ satisfies \ref{P5}.
\end{corollary}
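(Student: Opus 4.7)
The plan is to deduce \ref{P5} for $\lVert \cdot \rVert_{p, q, b}$ by transferring it from $\lVert \cdot \rVert_{(p, q, b)}$, exploiting the equivalence of the two functionals that holds precisely in the regime $p>1$.

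First, I would invoke Proposition~\ref{PP5}, which states that $\lVert \cdot \rVert_{(p, q, b)}$ satisfies \ref{P5} for every admissible choice of parameters $p, q, b$ (this was established there directly via the Hardy--Littlewood inequality, independently of whether $p>1$ or not). Next, I would invoke Theorem~\ref{TEQN} (or, equivalently, Corollary~\ref{CEQNa}): under the assumption $p>1$, we have
\begin{equation*}
\lVert \cdot \rVert_{p, q, b} \lesssim \lVert \cdot \rVert_{(p, q, b)} \lesssim \lVert \cdot \rVert_{p, q, b},
\end{equation*}
so the two functionals are equivalent on $M$.

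Finally, I would observe that \ref{P5} is manifestly stable under equivalence of functionals. More precisely, fix $E \subseteq R$ with $\mu(E) < \infty$. By Proposition~\ref{PP5} applied to $\lVert \cdot \rVert_{(p,q,b)}$, there exists a finite constant $C_E'$ such that $\int_E f \, d\mu \leq C_E' \lVert f \rVert_{(p,q,b)}$ for every $f \in M_+$. Combining this with the equivalence $\lVert f \rVert_{(p,q,b)} \leq C \lVert f \rVert_{p,q,b}$ supplied by Theorem~\ref{TEQN}, we obtain $\int_E f \, d\mu \leq C_E' C \lVert f \rVert_{p,q,b}$, and setting $C_E = C_E' C$ yields \ref{P5} for $\lVert \cdot \rVert_{p,q,b}$.

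There is no real obstacle here; everything is a straightforward consequence of results already proved. The only point deserving a moment of care is that one must cite Theorem~\ref{TEQN} (which covers the full range $q\in(0,\infty]$ under $p>1$) rather than Corollary~\ref{CEQN} (which additionally restricts $q\geq 1$), so that the conclusion really does hold for all $q\in(0,\infty]$ as stated.
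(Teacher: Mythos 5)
Your proposal is correct and follows exactly the route the paper intends: Corollary~\ref{CP5} is presented there as a consequence of the equivalence $\lVert \cdot \rVert_{(p,q,b)} \approx \lVert \cdot \rVert_{p,q,b}$ for $p>1$ (Theorem~\ref{TEQN} / Corollary~\ref{CEQNa}) combined with Proposition~\ref{PP5}, which gives \ref{P5} for $\lVert \cdot \rVert_{(p,q,b)}$ unconditionally. Your remark about citing Theorem~\ref{TEQN} rather than Corollary~\ref{CEQN}, so as to cover all $q\in(0,\infty]$, is a correct and worthwhile point of care.
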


\subsection{Absolute continuity of the quasinorm} \label{SecACN}

In this section we characterise which functions in Lorentz--Karamata spaces have absolutely continuous quasi-norm. We begin with the spaces $L^{p,q,b}$.

\begin{theorem} \label{TACN}
	Let $p, q, b$ be as in Definition~\ref{DLK}. In the case when $p=q=\infty$ we further assume that $b$ is non-decreasing. If $q < \infty$, then $L^{p,q,b}_a = L^{p,q,b}$ (i.e.~the space $L^{p,q,b}$ has absolutely continuous quasinorm), while in the case when $q = \infty$ it holds that
	\begin{equation*}
	L^{p, \infty, b}_a = \left\{ f \in L^{p, \infty, b}; \; \lim_{t \to 0_+} f^*(t) t^{\frac{1}{p}} b(t) = \lim_{t \to \infty} f^*(t) t^{\frac{1}{p}} b(t) = 0 \right\}.
	\end{equation*}
	
	Specifically, if the parameters $p$ and $b$ are chosen in such a way that the space $L^{p, \infty, b}$ is non-trivial, then it does not have absolutely continuous quasinorm.
\end{theorem}

Note that the monotonicity assumption on $b$ in the case when $p=q=\infty$ comes at no loss of generality as follows from Proposition~\ref{PbND}.

\begin{proof}
	Let us first consider, for any $f \in M_0$ and any $t_0 \in (0, \infty)$, the functions
	\begin{align*}
	f_0 &= \min \left\{\lvert f \rvert, \, f^*(t_0)\right\} \sgn(f), \\
	f_1 &= \max \left\{\lvert f \rvert - f^*(t_0), \, 0 \right\} \sgn(f),
	\end{align*}
	and the set
	\begin{equation*}
	R_1 = \left\{ x \in R; \; \lvert f_1 \rvert > 0 \right\} = \left\{ x \in R; \; \lvert f \rvert > f^*(t_0) \right\}.
	\end{equation*}
	We immediately see that $f = f_0 + f_1$ and that $\mu(R_1) \leq t_0 < \infty$. Moreover, it is an exercise to verify that
	\begin{align*}
	f_0^* &= f^*(t_0) \chi_{[0, t_0)} + f^* \chi_{[t_0, \infty)}, \\
	f_1^* &= (f^* - f^*(t_0)) \chi_{[0, t_0)}.
	\end{align*}
	
	For the remainder of the proof, $E_k$ will denote some sequence of arbitrary subsets of $R$ such that $\chi_{E_k} \to 0$ $\mu$-a.e.
	
	If we now assume that $q < \infty$ and $f \in L^{p,q,b}$, then for any fixed $\varepsilon > 0$ we may choose some sufficiently large $t_0 \in (0, \infty)$ such that with the notation established above we have
	\begin{equation*}
	\lVert f_0 \rVert_{p, q, b} = \left( \int_{0}^{\infty} (f_0^*(t))^q t^{\frac{q}{p} -1} b^q(t) \: dt  \right)^{\frac{1}{q}} < \varepsilon.
	\end{equation*}	
	This follows from the Lebesgue dominated convergence theorem, because any $f \in L^{p,q,b}$ satisfies
	\begin{equation*}
	\lim_{t \to \infty} f^*(t) = 0
	\end{equation*}
	and thus for any sequence $t_n$ of positive numbers, such that $t_n \to \infty$ as $n \to \infty$, the corresponding functions $f_0^*$ converge to zero $\lambda$-a.e., while we also have that
	\begin{equation*}
	\int_0^{\infty} (f_0^*(t))^q t^{\frac{q}{p} -1} b^q(t) \: dt \leq \lVert f \rVert_{p, q, b}^q < \infty
	\end{equation*}
	for any such $f_0$.
	
	Now, we get that 
	\begin{equation*}
	\lVert f\chi_{E_k} \rVert_{p, q, b} \lesssim \lVert f_1 \chi_{E_k} \rVert_{p, q, b} + \lVert f_0 \chi_{E_k} \rVert_{p, q, b},
	\end{equation*}
	since $\lVert \cdot \rVert_{p, q, b}$ is a quasinorm (by Proposition~\ref{LQ}). As for the first term, we obtain from the continuity of measure that
	\begin{align*}
	\mu \left( E_k \cap R_1 \right) &\leq \mu \left( \left( \cup_{l \geq k}  E_l \right) \cap R_1 \right) \to 0 &\textup{as } k \to \infty.
	\end{align*}
	It then follows from the Lebesgue dominated convergence theorem that
	\begin{align*}
	\lVert f_1\chi_{E_k} \rVert_{p, q, b} &\leq \left( \int_0^{\mu \left( E_k \cap R_1 \right)} (f^*(t))^q t^{\frac{q}{p} -1} b^q(t) \: dt  \right)^{\frac{1}{q}} \to 0 &\textup{as } k \to \infty,
	\end{align*}
	since the integrals in question are again estimated by $\lVert f \rVert_{p, q, b}^q$. The estimate for the second term now follows directly from our choice of $t_0$, as
	\begin{equation*}
	\lVert f_0 \chi_{E_k} \rVert_{p, q, b} \leq \lVert f_0 \rVert_{p, q, b} <\varepsilon.
	\end{equation*}
	
	We have thus shown that
	\begin{equation*}
	\limsup_{k \to \infty} \lVert f\chi_{E_k} \rVert_{p, q, b} \lesssim \varepsilon.
	\end{equation*}
	Since $\varepsilon$ was arbitrary, we have thus proved the statement for the case $q < \infty$.
	
	Let us now turn to the case when $q = \infty$. We will assume that $L^{p,\infty,b}$ is non-trivial (i.e.~that either $p < \infty$ or $p = \infty$ and $b$ is bounded on some neighbourhood of zero), because the remaining case is trivial.
	
	To establish the sufficiency, we proceed by fixing some $\varepsilon < \infty$ and considering some function $f \in L^{p, \infty, b}$ such that
	\begin{equation} \label{TACN0}
	\lim_{t \to 0_+} f^*(t) t^{\frac{1}{p}} b(t) = \lim_{t \to \infty} f^*(t) t^{\frac{1}{p}} b(t) = 0
	\end{equation}
	and some sufficiently large $t_0 \in (0, \infty)$ such that
	\begin{align*}
	f^*(t) t^{\frac{1}{p}} b(t) &< \varepsilon &\textup{for } t \in [t_0, \infty).
	\end{align*}
	It follows that the corresponding function $f_0$ satisfies
	\begin{align} \label{TACN1}
	f_0^*(t) t^{\frac{1}{p}} b(t) &\lesssim \varepsilon &\textup{for } t \in (0, \infty),
	\end{align}
	where the extension follows directly from the definition of s.v.~functions in the case $p < \infty$ and from our assumption that $b$ is non-decreasing in the case $p = \infty$. We stress that the constant in the estimate depends only on the function $b$, not on our choice of $t_0$.
	
	As before, we may estimate
	\begin{equation*}
	\lVert f\chi_{E_k} \rVert_{p, \infty, b} \lesssim \lVert f_1 \chi_{E_k} \rVert_{p, \infty, b} + \lVert f_0 \chi_{E_k} \rVert_{p, \infty, b},
	\end{equation*}
	where \eqref{TACN1} immediately yields that $\lVert f_0 \chi_{E_k} \rVert_{p, \infty, b} \lesssim \varepsilon$. As for the remaining term, the same argument as above can be used to obtain $\mu \left( E_k \cap R_1 \right) \to \infty$ as $k \to \infty$ and thus, by our assumption \eqref{TACN0},
	\begin{align*}
	\lVert f_1 \chi_{E_k} \rVert_{p, \infty, b} &\leq \esssup_{t \in (0, \mu(E_k \cap R_1))} f^*(t) t^{\frac{1}{p}} b(t) \to 0 &\textup{as } k \to \infty.
	\end{align*}	
	
	We have thus shown that
	\begin{equation*}
	\limsup_{k \to \infty} \lVert f\chi_{E_k} \rVert_{p, \infty, b} \lesssim \varepsilon.
	\end{equation*}
	Since $\varepsilon$ was arbitrary, we have thus proved the sufficiency of the condition \eqref{TACN0} in the case $q = \infty$.
	
	Let us now concentrate on the necessity of \eqref{TACN0}, so let us fix some $f \in L^{p, \infty, b}_a$. That
	\begin{equation*}
	\lim_{t \to 0_+} f^*(t) t^{\frac{1}{p}} b(t) = 0
	\end{equation*}
	follows by taking as $E_k$ some sequence of sets such that
	\begin{equation} \label{TACN1.5}
	\left\{ x \in R; \; \lvert f(x) \rvert > f^*(k^{-1}) \right\} \subseteq E_k \subseteq \left\{ x \in R; \; \lvert f(x) \rvert \geq f^*(k^{-1}) \right\}
	\end{equation}
	and $\mu(E_k) = k^{-1}$. Indeed, for this choice of $E_k$ we have that
	\begin{equation*}
	(f \chi_{E_k})^* = f^*\chi_{[0, k^{-1})}
	\end{equation*}
	and thus
	\begin{align*}
	\sup_{t \in (0, k^{-1})} f^*(t) t^{\frac{1}{p}} b(t) &= \lVert f \chi_{E_k} \rVert_{p, \infty, b} \to 0 &\textup{as } k \to \infty.
	\end{align*}
	
	As for the necessity of
	\begin{equation} \label{TACN2}
	\lim_{t \to \infty} f^*(t) t^{\frac{1}{p}} b(t) = 0,
	\end{equation}
	we will assume that $\mu(R) = \infty$ because otherwise there is nothing to prove. We consider first the cases when $p < \infty$ or both $p = \infty$ and $b$ is not bounded on any neighbourhood of infinity. In those cases we know that any $f \in L^{p, \infty, b}$ satisfies
	\begin{equation*}
	\lim_{t \to \infty} f^*(t) = 0.
	\end{equation*}
	Therefore we may choose the sequence of functions
	\begin{equation*}
	f_n = \min \left\{\lvert f \rvert, \, f^*(n)\right\}
	\end{equation*}
	to obtain
	\begin{align*}
	\sup_{t \in (n, \infty)} f^*(t) t^{\frac{1}{p}} b(t) &\leq \lVert f_n \rVert_{p,\infty,b} \to 0 &\textup{as } n \to \infty,
	\end{align*}
	where the first estimate is due to
	\begin{equation*}
	f_n^* = f^*(n) \chi_{[0, n)} + f^* \chi_{[n,\infty)}
	\end{equation*}
	and the convergence is due to Proposition~\ref{PropACN}, since clearly $\lvert f \rvert \geq \lvert f_n \rvert \downarrow 0$ $\mu$-a.e. (as $n \to \infty$).
	
	Finally, let us consider the necessity of \eqref{TACN2} in the case then $p= \infty$ and $b$ is bounded on some neighbourhood of infinity (i.e.~$b$ is bounded on $(0, \infty)$ as we assume that it is non-decreasing). In this case, \eqref{TACN2} is equivalent to simply
	\begin{equation*}
	\lim_{t \to \infty} f^*(t) = 0.
	\end{equation*}
	We will assume that a given function $f \in L^{\infty, \infty, b}$ satisfies
	\begin{equation*}
	\lim_{t \to \infty} f^*(t) > 0
	\end{equation*}
	and show that this implies that $f \notin L^{\infty, \infty, b}_a$. With this assumption, we see that there is some $\alpha > 0$ such that
	\begin{equation*}
	\mu \left( \left\{x \in R; \; \lvert f \rvert > \alpha \right\} \right) = \infty.
	\end{equation*}
	By fixing some increasing sequence $R_k$ of sets of finite measure such that
	\begin{equation*}
	R = \bigcup_{k \in \mathbb{N}} R_k
	\end{equation*}
	and putting
	\begin{equation*}
	E_k = \left\{x \in R; \; \lvert f \rvert > \alpha \right\} \setminus R_k,
	\end{equation*}
	we see $\mu(E_k) = \infty$ for every $k \in \mathbb{N}$ and that $\chi_{E_k} \to 0$ as $k \to \infty$. We now compute
	\begin{equation*}
	\lVert f \chi_{E_k} \rVert_{\infty, \infty, b} \geq \lVert \alpha \chi_{E_k} \rVert_{\infty, \infty, b} = \alpha \sup_{t \in (0, \infty) } b(t) > 0.
	\end{equation*} 
	Hence, $f$ does not have absolutely continuous quasinorm.
	
	It remains to show that if $L^{p, \infty, b}$ is non-trivial, then it contains a function violating \eqref{TACN0}, which is an easy exercise and therefore left to the reader.
\end{proof}

We now turn our attention to the spaces $L^{(p,q,b)}$. We only need to consider the case $p=1$, since the cases when $p>1$ are, thanks to Corollary~\ref{CEQNa}, already covered by Theorem~\ref{TACN}, while for $p<1$ we know thanks to Proposition~\ref{PP4} that $L^{(p,q,b)} = \{ 0\}$ and thus the question is both trivial and uninteresting.

\begin{theorem}
	Let $p, q, b$ be as in Definition~\ref{DLK} and assume that $p=1$. In the case when $q=\infty$ we further assume that $b$ is non-increasing. If $q < \infty$, then $L^{(1,q,b)}_a = L^{(1,q,b)}$ (i.e.~the space $L^{(1,q,b)}$ has absolutely continuous quasinorm), while in the case when $q = \infty$ it holds that
	\begin{align*}
	L^{(1, \infty, b)}_a &=	\left\{ f \in L^{(1, \infty, b)}; \; \lim_{t \to 0_+} tb(t) f^{**}(t) = \lim_{t \to \infty} tb(t) f^{**}(t) = 0 \right\} &\text{if } \lim_{t \to \infty} b(t) = 0, \\
	L^{(1, \infty, b)}_a &= \left\{ f \in L^{(1, \infty, b)}; \; \lim_{t \to 0_+} tb(t) f^{**}(t) = 0 \right\} &\text{if } \lim_{t \to \infty} b(t) > 0.
	\end{align*}
	
	Specifically, if the parameter $b$ is chosen in such a way that the space $L^{(1, \infty, b)}$ is non-trivial, then it does not have absolutely continuous quasinorm.
\end{theorem}

Note that the assumption that $b$ is non-increasing if $q = \infty$ comes at no loss of generality due to Proposition~\ref{PbNI}.

\begin{proof}
	We will use the notion already introduced in the proof of Theorem~\ref{TACN}, so we refer the reader there for the meanings of the symbols $f, f_0, f_1, R_1, t_0$.
	
	The case when $q<\infty$ is analogous to that in Theorem~\ref{TACN} so we will be brief and mainly outline the differences. We have for any $f \in L^{(1,q,b)}$ that
	\begin{equation*}
	\lim_{t \to \infty} f^*(t) \leq \lim_{t \to \infty} f^{**}(t) = 0
	\end{equation*}
	and
	\begin{equation*}
	f_0^{**} \leq f^*(t_0),
	\end{equation*}
	so we may again choose for any $\varepsilon > 0$ some $t_0 \in (0, \infty)$ such that
	\begin{equation*}
	\lVert f_0 \rVert_{(1, q, b)} < \varepsilon
	\end{equation*}
	and thus
	\begin{equation*}
	\lVert f \chi_{E_k} \rVert_{(1, q, b)} \lesssim \lVert f_1 \chi_{E_k} \rVert_{(1, q, b)} + \lVert f_0 \chi_{E_k} \rVert_{(1, q, b)} < \lVert f_1 \chi_{E_k} \rVert_{(1, q, b)} + \varepsilon.
	\end{equation*}
	Furthermore, it holds for every $t \in (0, \infty)$ that
	\begin{align*}
	(f_1 \chi_{E_k})^{**}(t) &\leq \frac{1}{t} \int_0^{t} f^* \chi_{[0, \mu(E_k \cap R_1))} \: d\lambda \to 0 & \text{as } k \to \infty,
	\end{align*}
	since the integrand converges to zero $\lambda$-a.e.~and we have the majorant $f^* \chi_{[0, t_0)}$. Hence we obtain as before that
	\begin{align*}
	\lVert f_1 \chi_{E_k} \rVert_{(1, q, b)} &\to 0 & \text{as } k \to \infty.
	\end{align*}
	Since $\varepsilon$ was arbitrary, we have thus covered the case when $q < \infty$.
	
	It remains to consider the case $q= \infty$. We begin with sufficiency by estimating the norms of $f_0$ and $f_1 \chi_{E_k}$. We have for $f_0$ that
	\begin{equation} \label{T2ACN2}
	\begin{split}
	\lVert f_0 \rVert_{(1, \infty, b)} &= \sup_{t \in (0, \infty)} b(t) \int_0^{t} f^*(t_0) \chi_{[0, t_0)} + f^* \chi_{[t_0, \infty)} \: d\lambda \\
	&= \max \left \{ \sup_{t \in (0, t_0]} tb(t)f^*(t_0), \, \sup_{t \in [t_0, \infty)} b(t)t_0f^*(t_0) + b(t)\int_{t_0}^{t} f^* \: d\lambda \right \}.
	\end{split}
	\end{equation}
	Now, $t \mapsto tb(t)$ is equivalent to a non-decreasing function, while we assume that $b$ is non-increasing. It follows that
	\begin{align*}
	\sup_{t \in (0, t_0]} tb(t)f^*(t_0) &\approx t_0 b(t_0) f^*(t_0), \\
	\sup_{t \in [t_0, \infty)} b(t)t_0f^*(t_0) &= t_0 b(t_0) f^*(t_0).
	\end{align*}
	Thence we conclude that
	\begin{equation*}
	\lVert f_0 \rVert_{(1, \infty, b)} \lesssim t_0 b(t_0) f^*(t_0) + \sup_{t \in [t_0, \infty)} b(t)\int_{t_0}^{t} f^* \: d\lambda.
	\end{equation*}
	As for $f_1 \chi_{E_k}$, we put $t_k = \mu(E_5 \cap R_1)$ and compute
	\begin{align*}
	\lVert f_1 \chi_{E_k} \rVert_{(1, \infty, b)} &\leq \sup_{t \in (0, \infty)} b(t) \int_0^{t} f^* \chi_{[0,t_k)} \: d\lambda \\
	&= \max \left \{ \sup_{t \in (0, t_k]} b(t) \int_0^{t} f^*  \: d\lambda , \, \sup_{t \in [t_k, \infty)} b(t) \int_0^{t_k} f^* \: d\lambda \right \}.
	\end{align*}
	Since we assume that $b$ is non-increasing, we conclude that
	\begin{equation*}
	\lVert f_1 \chi_{E_k} \rVert_{(1, \infty, b)} \leq  \sup_{t \in (0, t_k]} b(t) \int_0^{t} f^*  \: d\lambda.
	\end{equation*}
	
	Consider now the case when $\lim_{t \to \infty} b(t) = 0$ and assume we have some $f \in L^{(1,\infty, b)}$ satisfying
	\begin{equation} \label{T2ACN1}
	\lim_{t \to 0_+} tb(t) f^{**}(t) = \lim_{t \to \infty} tb(t) f^{**}(t) = 0.
	\end{equation}
	Since
	\begin{equation*}
	t_0 b(t_0) f^*(t_0) + \sup_{t \in [t_0, \infty)} b(t)\int_{t_0}^{t} f^* \: d\lambda \leq t_0 b(t_0) f^{**}(t_0) + \sup_{t \in [t_0, \infty)} tb(t) f^{**}(t)
	\end{equation*}
	the condition on limit at infinity in \eqref{T2ACN1} guarantees that we may choose for any $\varepsilon > 0$ some $t_0 \in (0, \infty)$ such that
	\begin{equation*}
	\lVert f_0 \rVert_{(1, \infty, b)} < \varepsilon.
	\end{equation*}
	On the other hand, the condition on limit at zero in \eqref{T2ACN1} guarantees that
	\begin{align*}
	\lVert f_1 \chi_{E_k} \rVert_{(1, \infty, b)} &\to 0 &\textup{as } k \to \infty.
	\end{align*}
	As before, this shows that the condition \eqref{T2ACN1} is sufficient.
	
	In the case when $\lim_{t \to \infty} b(t) = \beta > 0$, we obtain that any $f \in L^{(1, \infty, b)}$ satisfies
	\begin{equation*}
	\int_0^{\infty} f^* \: d\lambda = \sup_{t \in [0, \infty)} t f^{**}(t) \leq \beta^{-1} \sup_{t \in [0, \infty)} tb(t) f^{**}(t) = \lVert f \rVert_{(1, \infty, b)} < \infty.
	\end{equation*}
	It thus follows that
	\begin{align*}
	\sup_{t \in [t_0, \infty)} b(t) \int_{t_0}^{t} f^* \: d\lambda &\leq b(t_0) \int_{t_0}^{\infty} f^* \: d\lambda \to 0 & \text{as } t_0 \to \infty.
	\end{align*}
	Furthermore, since we have for $t \geq 2t_0$ that
	\begin{equation*}
	b(t) \int_{t_0}^{t} f^* \: d\lambda \geq b(t) (t-t_0) f^*(t) \geq \frac{1}{2} tb(t)f^*(t),
	\end{equation*}
	we obtain that
	\begin{equation*}
	\lim_{t \to \infty} tb(t) f^*(t) = 0.
	\end{equation*}
	We may thus again choose, for any $\varepsilon > 0$, some $t_0 \in (0, \infty)$ such that
	\begin{equation*}
	\lVert f_0 \rVert_{(1, \infty, b)} < \varepsilon.
	\end{equation*}
	The rest of the argument is now exactly the same as in the previous case.
	
	The sufficiency being established, we turn to the necessity and fix some $f \in L^{(1,\infty,b)_a}$. As for the condition
	\begin{equation*}
	\lim_{t \to 0_+} tb(t) f^{**}(t) = 0
	\end{equation*}
	which is common for both of the cases, we observe that if we choose, as in the proof of Theorem~\ref{TACN}, a sequence $E_k$ of sets such that \eqref{TACN1.5} holds and $\mu(E_k) = k^{-1}$, then we have (by using the arguments the from proof of Theorem~\ref{TACN}) that
	\begin{align*}
	\sup_{t \in (0, k^{-1})} tb(t) f^{**}(t) &\leq \sup_{t \in (0, \infty)} b(t) \int_0^{t} f^*\chi_{[0, k^{-1})} \: d\lambda = \lVert f \chi_{E_k} \rVert_{(1, \infty, b)} \to 0 &\textup{as } k \to \infty.
	\end{align*}
	
	It remains to show the necessity of
	\begin{equation} \label{T2ACN3}
	\lim_{t \to \infty} tb(t) f^{**}(t) = 0
	\end{equation}
	under the assumption $\lim_{t \to \infty} b(t) = 0$. To this end, consider the functions $f_n$ given by
	\begin{equation*}
	f_n = \min \left \{ \lvert f \rvert, \, f^*(n) \right \}.
	\end{equation*}
	Since we have by \ref{SV3} that
	\begin{equation*}
	\lim_{t \to \infty} tb(t) = \infty,
	\end{equation*}
	it follows that it holds for every $f \in L^{(1,\infty,b)}$ that
	\begin{equation*}
	\lim_{t \to \infty} f^*(t) \leq \lim_{t \to \infty} f^{**}(t) = 0.
	\end{equation*}
	The functions $f_n$ thus converge to zero $\mu$-a.e.~and we may use Proposition~\ref{PropACN} together with \eqref{T2ACN2} (with $t_0=n$) to get
	\begin{align*}
	\sup_{t \in [n, \infty)} b(t)\int_{n}^{t} f^* \: d\lambda &\leq  \lVert f_n \rVert_{(1, \infty, b)} \to 0 &\textup{as } n \to \infty.
	\end{align*}
	It now trivially follows that
	\begin{align*}
	\limsup_{t \to \infty} b(t)\int_{n}^{t} f^* \: d\lambda & \to 0 &\textup{as } n \to \infty.
	\end{align*}
	Finally, we may use our assumption $\lim_{t \to \infty} b(t) = 0$ to compute
	\begin{equation*}
	\limsup_{t \to \infty} tb(t)f^{**}(t) \leq \limsup_{t \to \infty} b(t)\int_{n}^{t} f^* \: d\lambda + \limsup_{t \to \infty} b(t)\int_{0}^{n} f^* \: d\lambda = \limsup_{t \to \infty} b(t)\int_{n}^{t} f^* \: d\lambda.
	\end{equation*}
	Since the left-hand side does not depend on $n$ (and since the remaining estimate is trivial), we conclude that $f$ indeed satisfies \eqref{T2ACN3}.
	
	It remains to show that if the space $L^{(1,\infty,b)}$ is non-trivial then it contains a function $f$ for which 
	\begin{equation*}
	\lim_{t \to 0^+} tb(t) f^{**}(t) > 0,
	\end{equation*}
	which is an easy exercise that we leave to the reader.
\end{proof}

We would like to point out that our proof of the necessity of \eqref{T2ACN3} uses some ideas inspired by \cite{SlavikovaUnp}.

\subsection{Associate spaces of Lorentz--Karamata spaces} \label{SSLKAS}
In this section we provide a full description of the associate spaces of Lorentz--Karamata spaces. We first single out those $p$ for which $(L^{p,q,b})' = \{ 0 \}$ independently of $b$.

\begin{proposition} \label{PAS}
	Let $p, q, b$ be as in Definition~\ref{DLK}. If $p \in (0, 1)$ then $(L^{p,q,b})' = \{ 0 \}$.
\end{proposition}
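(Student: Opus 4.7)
My plan is to prove the contrapositive form: for every nonzero $f \in M$, show that $\lVert f \rVert_{X'} = \infty$, i.e.\ exhibit a family of test functions $g_t$ with $\lVert g_t \rVert_{p,q,b} \leq 1$ for which $\int_R \lvert f g_t \rvert \, d\mu$ is unbounded. The natural test functions are (rescaled) characteristic functions of small subsets of the support of $f$; the whole point is that when $p<1$, normalising such a characteristic function in $L^{p,q,b}$ forces its amplitude to blow up faster than its $L^1$-mass shrinks.

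First I would locate a ``good'' piece of the support. Since $f \neq 0$ in $M$, there exists $\delta > 0$ with $\mu(\{|f|>\delta\}) > 0$, and by sigma-finiteness I can find a measurable $E_0 \subseteq \{|f|>\delta\}$ with $0 < \mu(E_0) < \infty$. Because $\mu$ is non-atomic, for every $t \in (0,\mu(E_0))$ there is a measurable $A_t \subseteq E_0$ with $\mu(A_t) = t$. Define $g_t = c(t)\chi_{A_t}$ with the scalar $c(t) > 0$ to be chosen.

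Next I would compute the norm of the test function. Since $(\chi_{A_t})^* = \chi_{(0,t)}$, we have
\begin{equation*}
\lVert g_t \rVert_{p,q,b} = c(t)\,\lVert s^{\frac{1}{p}-\frac{1}{q}}b(s)\chi_{(0,t)}(s) \rVert_q.
\end{equation*}
Property \ref{LEFF} applied with $\alpha = q/p$ (for $q<\infty$) and the second identity in \eqref{LEFF1} (for $q=\infty$) gives, since $p<1$ implies the exponent is strictly positive,
\begin{equation*}
\lVert g_t \rVert_{p,q,b} \approx c(t)\, t^{\frac{1}{p}} b(t).
\end{equation*}
So I can choose $c(t) = C_0\, t^{-1/p} b(t)^{-1}$ with a sufficiently small fixed $C_0 > 0$ so that $\lVert g_t \rVert_{p,q,b} \leq 1$ for every $t \in (0,\mu(E_0))$.

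Finally I would lower-bound the pairing. Since $|f| \geq \delta$ on $A_t \subseteq E_0$,
\begin{equation*}
\int_R \lvert f g_t \rvert \, d\mu \geq \delta\, c(t)\, \mu(A_t) = \delta C_0\, t^{1-\frac{1}{p}} b(t)^{-1}.
\end{equation*}
The assumption $p \in (0,1)$ yields $1 - 1/p < 0$, so by \ref{SV3} applied to the s.v.\ function $b^{-1}$ (which is s.v.\ by \ref{SV1}), we have $t^{1-1/p} b(t)^{-1} \to \infty$ as $t \to 0^+$. Hence the supremum in Definition~\ref{DAS} is $+\infty$, which gives $\lVert f \rVert_{X'} = \infty$ and therefore $(L^{p,q,b})' = \{0\}$. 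The main (modest) obstacle is handling the cases $p=\infty$ versus $p<\infty$ uniformly in the norm estimate for $g_t$, but \ref{LEFF} covers both at once provided $p<1$, so in fact no case analysis is needed.
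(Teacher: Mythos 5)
Your proof is correct, but it takes a genuinely different route from the paper's. The paper exhibits a \emph{single} test function $g_0$ with $g_0^*(t)=t^{-1}\chi_{(0,r)}(t)$, observes that $\lVert g_0\rVert_{p,q,b}<\infty$ precisely because $p<1$ lifts the weight exponent above the integrability threshold, and then uses the resonance of $(R,\mu)$ (equality in \eqref{DR}) together with the non-integrability of $g_0^*$ near zero to conclude that $\int_R\lvert fg\rvert\,d\mu$ is infinite for some $g$ equimeasurable with $g_0$, for every nonzero $f$. You instead use a \emph{family} of normalised indicators $c(t)\chi_{A_t}$ carved out of $\{\lvert f\rvert>\delta\}$, compute $\lVert\chi_{A_t}\rVert_{p,q,b}\approx t^{1/p}b(t)$ via \ref{LEFF}, and let $t\to0^+$; the blow-up $t^{1-1/p}b(t)^{-1}\to\infty$ from \ref{SV1} and \ref{SV3} does the rest. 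Your argument is more elementary and self-contained: it needs neither the resonance property nor the existence of a function with a prescribed rearrangement (the paper cites \cite[Chapter~2, Corollary~7.8]{BennettSharpley88} for that), only non-atomicity to produce subsets of arbitrarily small measure, and it makes transparent that the underlying mechanism is a quantitative failure of \ref{P5} localised inside the support of $f$. The paper's argument buys brevity --- one fixed element of the unit ball annihilates every nonzero $f$ at once. Two small slips in your write-up, neither affecting correctness: the exponent $\alpha=q/p$ in \ref{LEFF} is positive for every finite $p>0$, not only for $p<1$ (the hypothesis $p<1$ is only needed at the final limit $t^{1-1/p}b(t)^{-1}\to\infty$), and your closing remark about ``$p=\infty$ versus $p<\infty$'' is vacuous here since $p\in(0,1)$; presumably you meant the dichotomy $q=\infty$ versus $q<\infty$, which \eqref{LEFF1} indeed handles uniformly.
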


\begin{proof}
	Denote $r= \min \{ 1, \, \mu(R) \}$ and let $g_0$ be such a function from $M(R, \mu)$ that satisfies $g_0^*(t) = t^{-1} \chi_{(0, r)}(t)$. That such a function exists follows for example from \cite[Chapter~2, Corollary~7.8]{BennettSharpley88}. Then $\lVert g_0 \rVert_{p, q, b} < \infty$, as can be easily verified using \ref{LEFF}. We may assume that $\lVert g_0 \rVert_{p, q, b} \leq 1$, otherwise we would multiply $g_0$ by an appropriate number. On the other hand, $g_0^*$ is certainly not integrable on any deleted neighbourhood of zero and thus for any function $f \in M(R, \mu)$ other than the zero function we can use the resonance of $(R, \mu)$ to obtain
	\begin{equation*}
		\lVert f \rVert_{(L^{p,q,b})'} = \sup_{\substack{g \in M \\ \lVert g \rVert \leq 1}} \int_R \lvert fg \rvert \: d\mu \geq \sup_{\substack{g \in M \\ g^* = g_0^*}} \int_R \lvert fg \rvert \: d\mu = \int_0^{\infty} f^*g_0^*  = \infty.
	\end{equation*}
\end{proof}

This also shows that if $p<1$, then $\lVert \cdot \rVert_{p, q, b}$ does not satisfy \ref{P5}, because its associate functional does not satisfy the condition \ref{P4}. This result is already contained in Corollary~\ref{CP5b}, but the construction above proves it in a much more direct way.

For the proofs of the theorems bellow, we use a combination of approaches. In most cases we opted for comparatively direct and elementary methods, often inspired by the approach used by Opic and Pick in \cite{OpicPick99} for describing associate spaces of generalised Lorentz-Zygmund spaces. On the other hand, there are few limiting cases where those methods fail to provide a full solution, in the sense that they work only with some additional assumptions. In those cases we apply the abstract theory of classical Lorentz spaces to obtain the results in full generality. We recognise that this theory could also be applied to the remaining cases, but we believe that it is meaningful to show that this theory is in many cases not necessary.

\begin{theorem}  \label{TAS}
	Let $p, q, b$ be as in Definition~\ref{DLK} and suppose $q \in (1, \infty)$ and $p \in [1, \infty)$. Then
	\begin{align}
	(L^{p,q,b})' = L^{(p',q',b^{-1})} \label{TAS1}
	\end{align}
	up to equivalence of the defining functionals.
\end{theorem}

\begin{proof}
	We begin by proving that 
	\begin{equation*}
	(L^{p, q, b})'~\hookrightarrow~L^{(p', q', b^{-1})}
	\end{equation*}
	or, equivalently, that it holds for all $f \in M$ that
	\begin{equation}
	\lVert f \rVert_{(p', q', b^{-1})}~\lesssim~\lVert f \rVert_{(L_{p, q, b})'}. \label{TAS2}
	\end{equation}
	To this end, let us fix some arbitrary $f \in M$ and denote, for $t \in (0, \infty)$,
	\begin{align*}
	\varrho(t) &= (f^{**}(t))^{q'-1} t^{\frac{q'}{p'} -1}b^{-q'}(t) \\
	g(t) &= \int_t^{\infty} \frac{\varrho(s)}{s} \: ds. 
	\end{align*}
	Since $g$ is non-increasing we can find a function $ \hat{g} \in M(R, \mu)$ such that $\hat{g}^* = g$. We may now employ classical Fubini's theorem and the H{\"o}lder inequality \eqref{THAS} to compute
	\begin{equation*}
	\begin{split}
		\lVert f \rVert_{(p', q', b^{-1})}^{q'} &= \int_0^{\infty} \frac{\varrho(t)}{t} \int_0^{t}   f^*(s) \: ds \: dt \\
		&= \int_0^{\infty} f^*(s) g(s) \: ds \\
		&\leq \lVert \hat{g} \rVert_{p, q, b} \cdot \lVert f \rVert_{(L_{p, q, b})'}.
	\end{split}
	\end{equation*}
	Thus, it suffices to show that
	\begin{equation} 
		\lVert \hat{g} \rVert_{p, q, b} \lesssim \lVert f \rVert_{(p', q', b^{-1})}^{q'-1}.  \label{TAS3}
	\end{equation}
	 Rewriting this expression in a more explicit form and using the substitution
	\begin{equation*}
	h(t) = (f^{**}(t))^{q'-1} t^{\frac{q'}{p'} -2} b^{-q'}(t) 
	\end{equation*}
	we obtain that \eqref{TAS3} is equivalent to a weighted Hardy inequality
	\begin{equation*}
		\left \lVert t^{\frac{1}{p} - \frac{1}{q}} b(t) \int_t^{\infty} h(s) \: ds \right \lVert_q \lesssim \left \lVert t^{\frac{1}{p} + \frac{1}{q'}} b(t) h(t) \right \rVert_q, 
	\end{equation*}
	which, as it is characterised for example in \cite[Section~1.3, Theorem~3]{Maz'ya11}, holds if and only if the following condition holds:
	\begin{equation*}
	\sup_{r \in (0, \infty)} \bigg (\int_0^{r}t^{\frac{q}{p}-1} b^q(t) \:dt \bigg )^{\frac{1}{q}} \cdot \bigg (\int_r^{\infty} t^{-\frac{q'}{p}-1} b^{-q'}(t) \:dt \bigg )^{\frac{1}{q'}} < \infty.
	\end{equation*}
	Remembering \ref{LEFF} and that we assume $p < \infty$, we can easily check that this condition is satisfied. 
	
	Since $f \in M$ was arbitrary, we have thus proven \eqref{TAS2}. We now turn our attention to the embedding
	\begin{equation*}
	L^{(p', q', b^{-1})}~\hookrightarrow~(L^{p, q, b})',
	\end{equation*}
	which holds if and only if
	\begin{equation}
	\lVert f \rVert_{(L_{p, q, b})'}~\lesssim~\lVert f \rVert_{(p', q', b^{-1})} \label{TAS8}
	\end{equation}
	for all $f \in M$.
	It follows from the Hardy-Littlewood inequality \eqref{THLI} that in order to get \eqref{TAS8} it is sufficient to show that 
	\begin{equation*}
	\int_0^{\infty} f^*(t)g^*(t) dt \lesssim \lVert g \rVert_{p, q, b} \cdot \lVert f \rVert_{(p', q', b^{-1})}
	\end{equation*}
	holds for all $f, g \in M$. Now, to get this estimate we use the weighted inequality of Sawyer, proved in \cite[Theorem~1]{Sawyer90}, which for a weight $v$ gives us the following:
	\begin{equation}
	\begin{split}
	\int_0^{\infty} f^*(t)g^*(t) dt \leq& \bigg ( \int_0^{\infty} (g^*(t))^q v(t) \: dt \bigg )^{\frac{1}{q}} \\
	&\cdot \Bigg [ \bigg ( \int_0^{\infty} (f^{**}(t))^{q'} \tilde{v}(t) \: dt \bigg )^{\frac{1}{q'}} + \frac{\int_0^{\infty} f^*(t) \: dt}{\big ( \int_0^{\infty} v(t) \: dt \big )^{\frac{1}{q}}} \Bigg ], \label{TAS10}
	\end{split}
	\end{equation}
	where $\tilde{v}(s)$ is given for $t \in (0, \infty)$ by
	\begin{equation}
	\tilde{v}(t) = \frac{t^{q'}v(t)}{\bigg ( \int_0^{t} v(s) \: ds \bigg )^{q'}}. \label{TAS11}
	\end{equation}
	For our purpose, we put, for $t \in (0, \infty)$,
	\begin{equation}
	v(t) = t^{\frac{q}{p} - 1} b^q(t) \label{TAS12}
	\end{equation}
	and see immediately that $v$ is not integrable on $(0, \infty)$, as follows from \ref{SV4}, and thus by Sawyer's convention $``\frac{\infty}{\infty} = 0 " $ we get that the second summand at the second line of \eqref{TAS10} is zero. Therefore, to obtain \eqref{TAS8} it suffices to show that
	\begin{equation*}
	\tilde{v}(t) \lesssim t^{\frac{q'}{p'}-1} b^{-q'}(t)
	\end{equation*}
	holds for all $t \in (0, \infty)$, which follows from \ref{LEFF}.
\end{proof}

The method employed in the proof of Theorem~\ref{TAS} could be modified to solve even the subcase $p= \infty$, but it would require some additional technical assumptions. In order to solve this subcase in full generality, we turn to the abstract theory of classical Lorentz spaces contained in the paper \cite{GogatishviliPick03}.

\begin{theorem} \label{T1AS}
	Let $p, q, b$ be as in Definition~\ref{DLK} and suppose $q \in (1, \infty)$, $p = \infty$, and $\lVert t^{- \frac{1}{q}} b(t) \chi_{(0, 1)}(t) \rVert_q < \infty$. Then
	\begin{align}
		(L^{\infty,q,b})' = L^{(1,q',a)} \label{T1AS1}
	\end{align}
	up to equivalence of the defining functionals, where $a$ is given by
	\begin{align}  \label{T1AS1a}
		a(t) &= \left ( \int_0^{t} s^{-1} b^q(s) \: ds \right )^{-1} b^{q-1}(t)  &\textup{for } t \in (0, \infty).
	\end{align}
\end{theorem}

Note that the condition $\lVert t^{- \frac{1}{q}} b(t) \chi_{(0, 1)}(t) \rVert_q < \infty$ is natural, since by Proposition~\ref{LQ} it is equivalent to the non-triviality of $L^{\infty,q,b}$.

\begin{proof}
	It follows directly from the corresponding definitions that a Lorentz--Karamata space $L^{\infty,q,b}$ is, for our choices of $q$, also the classical Lorentz space $\Gamma^q(v)$ where $v$ is given by
	\begin{align} \label{T1AS2}
		v(t) &= t^{-1} b^q(t) &\textup{for } t \in (0, \infty).
	\end{align}
	It thus follows from \cite[Theorem~6.2]{GogatishviliPick03} that the associate space is the classical Lorentz space $\Gamma^{q'}(w)$ where $w$ is given by
	\begin{align*}
		w(t) &= \frac{t^{q'+q-1} \left( \int_0^{t} v(s) \: ds \right) \left( \int_t^{\infty} s^{-q} v(s) \: ds \right)  }{\left(  \left( \int_0^{t} v(s) \: ds \right) + t^q \left( \int_t^{\infty} s^{-q} v(s) \: ds  \right)  \right)^{q'+1}} &\textup{for } t \in (0, \infty).
	\end{align*}
	By plugging in \eqref{T1AS2} and by using \ref{LEFF} we may now simplify this expression to
	\begin{align} \label{T1AS3}
		w(t) &\approx \frac{t^{q'-1} \left( \int_0^{t} s^{-1} b^q(s) \: ds \right)b^q(t)  }{\left(  \left( \int_0^{t} s^{-1} b^q(s) \: ds \right) + b^q(t)  \right)^{q'+1}} &\textup{for } t \in (0, \infty).
	\end{align}
	Since we know from \ref{LTb} that
	\begin{align*}
		\int_0^{t} s^{-1} b^q(s) \: ds &\leq \left( \int_0^{t} s^{-1} b^q(s) \: ds \right) + b^q(t) \lesssim \int_0^{t} s^{-1} b^q(s) \: ds &\textup{for } t \in (0, \infty)
	\end{align*}
	we may further simplify \eqref{T1AS3} to get
	\begin{align*}
		w(t) &\approx \frac{t^{q'-1} \left( \int_0^{t} s^{-1} b^q(s) \: ds \right)b^q(t)  }{\left( \int_0^{t} s^{-1} b^q(s) \: ds\right)^{q'+1}} = t^{q'-1} \left( \int_0^{t} s^{-1} b^q(s) \: ds \right)^{-q'}b^q(t) &\textup{for } t \in (0, \infty).
	\end{align*}
	Finally, we obtain from the corresponding definitions that the classical Lorentz space $\Gamma^{q'}(w)$ is, for this $w$, exactly the Lorentz--Karamata space $L^{(1, q', a)}$, where $a$ is given by \eqref{T1AS1a}.
\end{proof}

\begin{theorem} \label{T2AS}
	Let $p, q, b$ be as in Definition~\ref{DLK} and suppose $q \in (0, 1]$. Then the associate spaces of $L^{p,q,b}$, up to equivalence of the defining functionals, can be described as follows:
	\begin{enumerate}
		\item
		If $p \in [1, \infty)$ then  
		\begin{equation*} \label{T2AS1}
		(L^{p,q,b})' = L^{(p',\infty,b^{-1})}.
		\end{equation*}
		\item
		If $p=\infty$ and $b$ satisfies $\lVert t^{-\frac{1}{q}} b(t) \chi_{(0, 1)}(t) \rVert_q < \infty$ then
		\begin{equation*} \label{T2AS2}
		(L^{\infty,q,b})' = L^{(1,\infty,a)}
		\end{equation*}
		where $a$ is given by
		\begin{align} \label{T2AS2.1}
		a(t) &= \left( \int_0^{t} s^{-1} b^q(s) \: ds \right) ^{-\frac{1}{q}} &\textup{for } t \in (0, \infty).
		\end{align}
	\end{enumerate}
\end{theorem}

Note that the condition $\lVert t^{-\frac{1}{q}} b(t) \chi_{(0, 1)}(t) \rVert_q < \infty$ is again natural, with the same reasoning as in the previous theorem. Furthermore, we observe that $a$ is non-increasing and therefore $\lVert a(t) \chi_{(1, \infty)}(t) \rVert_{\infty} < \infty$.

\begin{proof}
	Suppose first $q = 1$ and $p \in (1, \infty)$. Then we have by Theorem~\ref{TFFA*} that $L^{p,1,b}=\Lambda_{\varphi}$, where
	\begin{align*}
		\varphi(t) &= t^{\frac{1}{p}}b(t) &\textup{for } t \in (0, \infty)
	\end{align*}
	is the fundamental function of $L^{p,1,b}$. From this it follows (see Section~\ref{SSFF}) that
	\begin{equation}
	(L^{p,1,b})' = M_{\bar{\varphi}(t)}, \label{T2AS2.5}
	\end{equation}
	where $\bar{\varphi}$ is defined on $(0, \infty)$ by
	\begin{equation} \label{T2AS2.6}
	\bar{\varphi}(t) = \frac{t}{\varphi(t)}
	\end{equation}
	that is, it holds for all $f \in M$ that 
	\begin{equation*}
	\lVert f \rVert_{(L^{p,1,b})'} = \sup_{t \in (0, \infty)} \frac{t}{\int_0^{t} s^{\frac{1}{p}-1} b(s) \: ds} f^{**}(t).
	\end{equation*}
	As follows from \ref{LEFF}, the expression on the right-hand side is for $p \in [1, \infty)$ equivalent to $\lVert f \rVert_{(p',\infty,b^{-1})}$. 
	
	We now turn our attention to the case $q \in (0, 1)$ and $p \in (1, \infty)$. It follows from Proposition \ref{PELK} that $L^{p,q,b} \hookrightarrow L^{p,1,b}$ which implies $(L^{p,1,b})' \hookrightarrow (L^{p,q,b})'$. This, when put together with \eqref{T2AS2.5}, yields
	\begin{equation}
	M_{\bar{\varphi}(t)} \hookrightarrow (L^{p,q,b})'. \label{T2AS3}
	\end{equation}	
	Now, if follows from Proposition \ref{LQ} that $\lVert \cdot \rVert_{p, q, b}$ satisfies \ref{P4}, since we have $p < \infty$. By Corollary~\ref{CP5b} it also satisfies \ref{P5}. Hence, $\lVert \cdot \rVert_{p, q, b}$ satisfies the conditions from Theorem~\ref{TFA} and we thus have that $(L^{p,q,b})'$ is a Banach function space. The remark after Definition~\ref{DMES} therefore implies that 
	\begin{equation}
	(L^{p,q,b})' \hookrightarrow M_{\bar{\psi}}, \label{T2AS4}
	\end{equation}
	where $\psi$ denotes the fundamental function of the r.i.\ quasi-Banach function space $L^{p,q,b}$ and $\bar{\psi}$ denotes the fundamental function of $(L^{p,1,b})'$, which implies $\bar{\psi}(t) \geq \frac{t}{\psi(t)}$ on $(0, \infty)$. Thus, if we had 
	\begin{equation}	
	\varphi \approx \psi, \label{T2AS5}
	\end{equation}
	then it would follow that $\bar{\varphi}(t) \approx\frac{t}{\psi(t)} \leq \bar{\psi}(t)$ on $(0, \infty)$ and consequently 
	\begin{equation}	
	M_{\bar{\psi}} \hookrightarrow M_{\bar{\varphi}}. \label{T2AS6}
	\end{equation}
	We could then combine \eqref{T2AS6} with \eqref{T2AS3} and \eqref{T2AS4} to get $(L^{p,q,b})' = M_{\bar{\varphi}(t)}$ and the conclusion of the theorem would follow. So it remains only to verify \eqref{T2AS5}, which follows from Theorem~\ref{TFFA*}.
	
	Let us now turn to the case when $p = \infty$. In this case, the fundamental function $\varphi$ of $L^{\infty,q,b}$ is given by
	\begin{align*}
		\varphi(t) &= \left(  \int_0^{t} s^{-1}b^q(s) \: ds \right)^{\frac{1}{q}} &\textup{for } t \in (0, \infty), 
	\end{align*}
	as follows from Theorem~\ref{TFFE}, and the corresponding Lorentz endpoint space is characterised in the same theorem as the Lorentz--Karamata space $L^{\infty,1,c}$, where $c$ is the s.v.~function given by the formula
	\begin{align*}
		c(t) &= \left(  \int_0^{t} s^{-1}b^q(s) \: ds \right)^{\frac{1}{q} - 1}b^q(t) &\textup{for } t \in (0, \infty).
	\end{align*}
	
	Now, it follows from Theorem~\ref{TELK} that $L^{\infty, q, b} \hookrightarrow L^{\infty, 1, c}$. Indeed, the sufficient condition holds as $q<1$ and
	\begin{equation*}
		\sup_{r \in (0, \infty)} \frac{\int_0^{r} t^{-1} c(t) \: dt }{\left( \int_0^{r} t^{-1} b^q(t) \: dt \right) ^{\frac{1}{q}}} = \sup_{r \in (0, \infty)} \frac{\left( \int_0^{r} t^{-1} b^q(t) \: dt \right) ^{\frac{1}{q}}}{\left( \int_0^{r} t^{-1} b^q(t) \: dt \right) ^{\frac{1}{q}}} = 1,
	\end{equation*}
	since $r\mapsto r^{-1}c(r)$ is (up to a set of measure zero) the derivative of the locally absolutely continuous function
	\begin{align*}
		r &\mapsto \left( \int_0^{r} t^{-1} b^q(t) \: dt \right) ^{\frac{1}{q}} &\textup{for } r \in (0, \infty).
	\end{align*}
	We are thus in a similar situation as in the previous case, i.e.~we know that the space in question is embedded into its Lorentz endpoint space (with the difference being that now the formula for $\varphi$ depends on $q$ and therefore we get different Lorentz endpoint spaces for different values of $q$). We can thus use the same argument as before to obtain that
	\begin{equation*}
		(  L^{\infty, q, b} )' = ( \Lambda_{\varphi} )' = M_{\bar{\varphi}}
	\end{equation*}
	where $\bar{\varphi}$ is as in \eqref{T2AS2.6}. Put explicitly,
	\begin{align*}
		\bar{\varphi}(t) &= t \left( \int_0^{t} s^{-1} b^q(s) \: ds \right) ^{-\frac{1}{q}} = t a(t) &\textup{for } t \in (0, \infty),
	\end{align*}
	where $a$ is the function given by \eqref{T2AS2.1}, and thus, by comparing the definitions,
	\begin{equation*}
		M_{\bar{\varphi}} = L^{(1,\infty,a)},
	\end{equation*}
	as desired.
	
	Finally, we consider the case $p=1$. Here we turn to the abstract theory of the classical Lorentz spaces contained in \cite[Theorem~9.1]{CarroPick00}. For our choice of parameters $p,q,b$ we have that
	$L^{1,q,b} = \Lambda^q(v)$ with
	\begin{align*}
		v(t) &= t^{q-1}b^q(t) &\textup{for } t \in (0, \infty).
	\end{align*}
	The case (i) of the cited theorem thus yields that it holds for all $f \in M$ that
	\begin{equation*}
		\lVert f \rVert_{(L^{1,q,b})'} = \sup_{t \in (0, \infty)} f^{**}(t) \frac{t}{\left(  \int_0^{t} t^{q-1}b^q(t) \: dt \right)^{\frac{1}{q}}} = \sup_{t \in (0, \infty)} f^{**}(t) b^{-1}(t) = \lVert f \rVert_{L^{(\infty, \infty, b^{-1})}},
	\end{equation*}
	where the second equality is due to \ref{LEFF}. This concludes the proof.
\end{proof}

The subcase $p=1$ in the previous theorem could be solved by the same method as the rest of the theorem if we assumed that $b$ is equivalent to a non-increasing function. The abstract theory of classical Lorentz spaces is used because it allows us to remove this additional assumption.

\begin{theorem} \label{T3AS}
	Let $p, q, b$ be as in Definition~\ref{DLK} and suppose $q = \infty$ and $p \in [1, \infty]$. If $p=\infty$ then we assume that $b$ is non-decreasing. Then
	\begin{equation*}
	(L^{p,\infty,b})' = L^{p',1,b^{-1}}
	\end{equation*}
	up to equivalence of the defining functionals.
\end{theorem}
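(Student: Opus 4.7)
The key observation is that when $q=\infty$, the condition $\lVert g\rVert_{p,\infty,b}\leq 1$ reduces to the pointwise bound $g^*(t)\leq t^{-1/p}b^{-1}(t)$ for $\lambda$-a.e.~$t\in(0,\infty)$. This lets us compute the associate norm directly from the Hardy--Littlewood inequality and the resonance of $(R,\mu)$, bypassing any weighted-inequality machinery.

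For the upper bound $\lVert f\rVert_{(L^{p,\infty,b})'} \leq \lVert f\rVert_{p',1,b^{-1}}$ I would just apply~\eqref{THLI}: for every $g\in M$ with $\lVert g\rVert_{p,\infty,b}\leq 1$,
$$\int_R \lvert fg\rvert\, d\mu \leq \int_0^\infty f^*(t)\,g^*(t)\, dt \leq \int_0^\infty f^*(t)\,t^{-1/p}b^{-1}(t)\, dt = \lVert f\rVert_{p',1,b^{-1}},$$
and take the supremum over $g$.

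For the converse, the plan is to use the function $h(t)=t^{-1/p}b^{-1}(t)$ itself (or rather a non-increasing equivalent of it) as the rearrangement of a near-extremal test function. Assuming momentarily that $h$ is equivalent to a non-increasing function $g_0^*$, I would invoke \cite[Chapter~2, Corollary~7.8]{BennettSharpley88} to obtain $g_0 \in M(R,\mu)$ whose non-increasing rearrangement is $g_0^*$, observe that $\lVert g_0\rVert_{p,\infty,b}=\esssup_{t\in(0,\infty)} t^{1/p}b(t)g_0^*(t)\approx 1$ and is thus some finite positive constant $C$, and use the resonance of $(R,\mu)$ (equality in~\eqref{DR}) to find, for every $\varepsilon>0$, a function $\tilde g\in M$ with $\tilde g^*=g_0^*$ satisfying $\int_R \lvert f\tilde g\rvert\, d\mu > \int_0^\infty f^*g_0^*\,dt - \varepsilon$. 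Testing the associate norm against $\tilde g/C$ and letting $\varepsilon\to 0^+$ then gives $\lVert f\rVert_{(L^{p,\infty,b})'} \gtrsim \int_0^\infty f^*(t)g_0^*(t)\,dt \approx \lVert f\rVert_{p',1,b^{-1}}$.

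The only nontrivial step, and the one where the hypotheses on $p$ and $b$ enter, is the verification that $h(t)=t^{-1/p}b^{-1}(t)$ is equivalent to a non-increasing function on $(0,\infty)$. For $p=\infty$ this is exactly the standing assumption that $b$ is non-decreasing. For $p\in[1,\infty)$ I would pick any $\varepsilon\in(0,1/p)$ and factor $h(t)=t^{-(1/p-\varepsilon)}\cdot t^{-\varepsilon}b^{-1}(t)$: property~\ref{SV1} ensures that $b^{-1}$ is slowly varying, so Definition~\ref{DSV} provides a non-increasing function equivalent to $t^{-\varepsilon}b^{-1}(t)$, which multiplied by the strictly decreasing factor $t^{-(1/p-\varepsilon)}$ yields a non-increasing equivalent of $h$.
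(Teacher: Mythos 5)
Your proposal is correct, and it is worth comparing with the paper's proof, which splits into two cases. For the endpoint cases $p\in\{1,\infty\}$ your argument coincides with the paper's: the upper bound $\lVert f\rVert_{(L^{p,\infty,b})'}\lesssim\lVert f\rVert_{p',1,b^{-1}}$ comes from the H\"older/Hardy--Littlewood estimate, and the lower bound comes from testing against a function whose rearrangement is equivalent to $t^{-1/p}b^{-1}(t)$ and using resonance. The genuine divergence is for $p\in(1,\infty)$: there the paper abandons the direct computation and instead uses Corollary~\ref{CEQNa} and \ref{LEFF} to identify $L^{p,\infty,b}$ with a Marcinkiewicz endpoint space $M_{\varphi}$, then invokes the duality $(M_{\varphi})'=\Lambda_{\bar{\varphi}}$ and the identification $\Lambda_{\bar{\varphi}}=L^{p',1,b^{-1}}$ from the proof of Theorem~\ref{T2AS}. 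You observe that the direct two-sided argument only requires $t^{-1/p}b^{-1}(t)$ to be equivalent to a non-increasing function, which for $p\in[1,\infty)$ follows from \ref{SV1} and Definition~\ref{DSV} by splitting off the power $t^{-(1/p-\varepsilon)}$ with $\varepsilon\in(0,1/p)$, and for $p=\infty$ is exactly the standing monotonicity assumption on $b$; this unifies all cases into a single elementary computation and avoids the endpoint-space machinery entirely. What the paper's detour buys is the additional structural information that $L^{p,\infty,b}$ is, up to equivalence, a Marcinkiewicz endpoint space for $p>1$, but that is not needed for the statement. The only point to make explicit in a polished write-up is that the non-increasing equivalent of $t^{-1/p}b^{-1}(t)$ should be adjusted to be right-continuous before applying \cite[Chapter~2, Corollary~7.8]{BennettSharpley88}; this changes it on at most a countable set and affects none of the estimates, and the paper's own proof glosses over the same point.
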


We would like to remind the reader that the assumption that $b$ is non-decreasing in the case when $p=q=\infty$ causes no loss of generality thanks to Proposition~\ref{PbND}.

\begin{proof}
	Suppose first that $p \in (1, \infty)$. Then by our conditions on $p, q, b$ we have, by the virtue of Corollary~\ref{CEQNa} and \ref{LEFF} with ($\alpha = \frac{1}{p'}$) that it holds for every $f \in M$ that
	\begin{equation*}
	\lVert f \rVert_{p, \infty, b} \approx \sup_{t \in (0, \infty)} \frac{t}{\int_0^{t} s^{\frac{1}{p'}-1} b^{-1}(s) \: ds} f^{**}(t). \label{T3AS2}
	\end{equation*}
	Therefore, we have from definition that $L^{p,q,b}=M_{\varphi}$, where 
	\begin{align*}
	\varphi(t) &= \frac{t}{\int_0^{t} s^{\frac{1}{p'}-1} b^{-1}(s) \: ds} &\textup{for } t \in (0, \infty),
	\end{align*}
	from which it follows (see Section~\ref{SSFF}) that 
	\begin{equation*}
	(L^{p,q,b})' = \Lambda_{\bar{\varphi}(t)},
	\end{equation*}
	where
	\begin{align*}
	\bar{\varphi}(t) &= \frac{t}{\varphi(t)} = \int_0^{t} s^{\frac{1}{p'}-1} b^{-1}(s) \: ds &\textup{for } t \in (0, \infty).
	\end{align*} 
	But since $p' > 1$, as follows from our assumptions, we already know from the proof of Theorem \ref{T2AS} that 
	\begin{equation*}
	\Lambda_{\bar{\varphi}(t)} = L^{p',1,b^{-1}}.
	\end{equation*}
	
	Now let us turn our attention to the remaining cases. We suppose that either $p=1$ or $p=\infty$, in which case we assume that $b$ is non-decreasing. The embedding 
	\begin{equation}
	L^{p',1,b^{-1}} \hookrightarrow (L^{p,\infty,b})' \label{T3AS4}
	\end{equation}
	follows from the classical H{\"o}lder inequality for the Lebesgue spaces over $(0, \infty)$, since it yields  
	\begin{equation*}
	\begin{split}
	\int_0^{\infty} f^*(t) g^*(t) \: dt &\leq \left \lVert t^{\frac{1}{p} }b(t)g^*(t) \right \rVert_{\infty} \cdot \left \lVert t^{\frac{1}{p'} -1}b^{-1}(t)f^*(t) \right \rVert_1 \\
	&= \lVert g \rVert_{p, \infty, b} \cdot \lVert f \rVert_{p', 1, b^{-1}}.
	\end{split}
	\end{equation*}
	The embedding \eqref{T3AS4} then follows by combining this estimate with the Hardy--Littlewood inequality~\eqref{THLI} and taking the supremum over all $g \in M$ such that $ \lVert g \rVert_{p, \infty, b} \leq 1$.
	
	For the converse embedding
	\begin{equation}
	(L^{p,\infty,b})' \hookrightarrow L^{p',1,b^{-1}}, \label{T3AS5}
	\end{equation}
	define
	\begin{align*}
	g(t) &= t^{\frac{1}{p'} -1} b^{-1}(t) &\textup{for } t \in (0, \infty).
	\end{align*}
	In both of the cases, our assumptions guarantee that $g \approx g^*$, from which it follows that any function $\tilde{g} \in M(R, \mu)$ such that $\tilde{g}^* = g^*$ satisfies $\lVert \tilde{g} \rVert_{p, \infty, b} < \infty$. Furthermore, we may use the fact that $(R, \mu)$ is resonant and the H{\"o}lder inequality \eqref{THAS} to get
	\begin{equation*}
	\begin{split}
	\lVert f \rVert_{p', 1, b^{-1}} &\approx \sup_{\substack{\tilde{g} \in M \\ \tilde{g}^*=g^*}} \int_R \lvert \tilde{g} f \rvert \\
	&\leq \lVert \tilde{g_0} \rVert_{p, \infty, b} \cdot \lVert f \rVert_{(L^{p',\infty,b})'}
	\end{split}
	\end{equation*}
	where $\tilde{g_0}$ is any fixed function from $M(R, \mu)$ such that $\tilde{g_0}^* = g^*$.  The embedding \eqref{T3AS5} follows.
\end{proof}

We conclude this section by presenting the associate spaces of the spaces $L^{(1,q,b)}$. We note that the choice $p=1$ is the only meaningful one; indeed for $p<1$ it holds that $L^{(p,q,b)} = \{0\}$ (see Proposition~\ref{PP4}) while for $p>1$ we have that $L^{(p,q,b)} = L^{p,q,b}$ (see Corollary~\ref{CEQNa}) and the associate spaces are thus described in the theorems above. Furthermore, it also follows from Proposition~\ref{PP4} that we only need to consider the case when $\lVert t^{- \frac{1}{q}} b(t) \chi_{(1, \infty)}(t) \rVert_q < \infty$.

\begin{theorem} \label{T4AS}
	Let $p, q, b$ be as in Definition~\ref{DLK} and consider the case when $p = 1$ and $\lVert t^{- \frac{1}{q}} b(t) \chi_{(1, \infty)}(t) \rVert_q < \infty$. Then the associate space of $L^{(1,q,b)}$, up to equivalence of the defining functionals, can be described as follows:
	\begin{enumerate}
		\item \label{T4ASi} If $q \in (0,1]$ then $(L^{(1,q,b)})' = L^{(\infty, \infty, a)}$ where a is given by
		\begin{align} \label{T4AS01}
			a(t) &= \left( \int_t^{\infty} s^{-1} b^q(s) \: ds \right)^{-\frac{1}{q}} &\textup{for } t \in (0, \infty).
		\end{align}
		\item \label{T4ASii} If $q \in (1, \infty)$ then $(L^{(1,q,b)})' = L^{(\infty, q', a)}$ where a is given by
		\begin{align} \label{T4AS02}
			a(t) &= \left( \int_t^{\infty} s^{-1} b^q(s) \: ds \right)^{-1}b^{q-1}(t) &\textup{for } t \in (0, \infty).
		\end{align}
		\item \label{T4ASiii} If $q=\infty$ and $b$ is assumed to be absolutely continuous and non-increasing then 
		\begin{align}
			(L^{(1,\infty,b)})' &= \label{T4AS04}
			\begin{cases}
				\Lambda^1((b^{-1})') & \text{in the case when } \lim_{t \to 0^+} b^{-1}(t) = 0, \\
				\Lambda^1((b^{-1})') \cap L^{\infty} & \text{in the case when } \lim_{t \to 0^+} b^{-1}(t) > 0,
			\end{cases}
		\end{align}
		where $\Lambda^1((b^{-1})')$ is a classical Lorentz space.
	\end{enumerate}
\end{theorem}

We note that the extra assumptions in the case \ref{T4ASiii} come at no loss of generality (due to Theorem~\ref{TSmSV} and Proposition~\ref{PbNI}, used in this order since the transformation $b \mapsto \hat{b}_{\infty}$ preserves local Lipschitz continuity) and ensure that $(b^{-1})'$ exists and is non-negative $\lambda$-a.e.

\begin{proof}
	For the cases \ref{T4ASi} and \ref{T4ASii} we employ \cite[Theorem~6.2]{GogatishviliPick03}. It follows directly from the definition that in those cases the Lorentz--Karamata space $L^{(1,q,b)}$ is also the classical Lorentz space $\Gamma^q(v)$ where $v$ is given by
	\begin{align} \label{T4AS03}
		v(t) &= t^{q-1} b^q(t) &\textup{for } t \in (0, \infty).
	\end{align}
	
	In the case \ref{T4ASi} it then follows from the said theorem by a simple calculation using \ref{LEFF} that the associate norm is given by
	\begin{equation} \label{T4AS1}
		\lVert f \rVert_{(L^{(1,q,b)})'} \approx \sup_{t \in (0, \infty) } f^{**}(t) \left( b^q(t) + \int_t^{\infty} s^{-1} b^q(s) \: ds \right)^{-\frac{1}{q}}.
	\end{equation}
	Now, we know from \ref{LHb} that
	\begin{align*}
		\int_t^{\infty} s^{-1} b^q(s) \: ds &\leq \int_t^{\infty} s^{-1} b^q(s) \: ds + b^q(t) \lesssim \int_t^{\infty} s^{-1} b^q(s) \: ds &\textup{for } t \in (0, \infty)
	\end{align*}
	and thus \eqref{T4AS1} simplifies to
	\begin{equation*}
		\lVert f \rVert_{(L^{(1,q,b)})'} \approx \sup_{t \in (0, \infty) } f^{**}(t) \left(\int_t^{\infty} s^{-1} b^q(s) \: ds \right)^{-\frac{1}{q}} = \lVert f \rVert_{L^{(\infty,\infty,a)}},
	\end{equation*}
	where $a$ is given by \eqref{T4AS01}.
	
	In the case \ref{T4ASii} it follows from the same theorem that the associate space is the classical Lorentz space $\Gamma^{q'}(w)$ where $w$ is given by
	\begin{align*}
		w(t) &= \frac{t^{q'+q-1} \left( \int_0^{t} v(s) \: ds \right) \left( \int_t^{\infty} s^{-q} v(s) \: ds \right)  }{\left(  \left( \int_0^{t} v(s) \: ds \right) + t^q \left( \int_t^{\infty} s^{-q} v(s) \: ds  \right)  \right)^{q'+1}} &\textup{for } t \in (0, \infty).
	\end{align*}
	We may now plug in \eqref{T4AS03} and use \ref{LEFF}, the same argument as in the previous step, and some elementary calculations to simplify this expression to
	\begin{align*}
		w(t) &\approx t^{-1} \left( \left( \int_t^{\infty} s^{-1} b^q(s) \: ds \right)^{-1}b^{q-1}(t)  \right)^{q'} &\textup{for } t \in (0, \infty).
	\end{align*}
	Hence, it follows from the corresponding definitions that $(L^{(1,q,b)})' = L^{(\infty, q', a)}$ where $a$ is given by \eqref{T4AS02}.

	As for the remaining case \ref{T4ASiii}, it follows from Theorem~\ref{TFF(1)} that $L^{(1, \infty, b)} = M_{\varphi}$ where $\varphi(t) = t\hat{b}_{\infty}(t) = tb(t)$ for $t \in (0, \infty)$ (the last equality is due to our assumption on monotonicity of $b$). It thus follows (see Section~\ref{SecFF}) that $(L^{(1,\infty,b)})' = \Lambda_{\bar{\varphi}(t)}$ where $\bar{\varphi}(t) = b^{-1}(t)$ for $t \in (0,\infty)$. Since $b^{-1}$ is non-decreasing and absolutely continuous, we know from Theorem~\ref{TFFE} that the corresponding Lorentz endpoint space can be described precisely as asserted in \eqref{T4AS04}.
\end{proof}

We would like to note that in the case \ref{T4ASiii} of the previous theorem it is also possible to deduce the characterisation using \cite[Corollary~1.9]{GogatishviliPick06}. However, as the above presented elementary proof shows, such an advanced machinery is not necessary for our purposes.

\subsection{Lorentz--Karamata spaces as Banach function spaces} \label{SSBFS}
In this section we study when a given Lorentz--Karamata space is equivalent to a Banach function space. We start with the spaces $L^{p,q,b}$, for which we provide a full characterisation in the following theorem. Most of the cases follow from the results already proved in the paper; for the remaining cases we turn to the more abstract theory of classical Lorentz spaces, relevant aspects of which have been covered in \cite{CarroSoria96}, \cite{CarroRaposo07}, \cite{Sawyer90} and even quite recently in \cite{GogatishviliSoudsky14}.

\begin{theorem} \label{TLKBFS}
	Let $p,q,b$ be as in Definition~\ref{DLK}. The space $L^{p,q,b}$ can be equipped with a Banach function norm equivalent to $\lVert \cdot \rVert_{p, q, b}$ if and only if $q \in [1, \infty]$ and one of the following conditions holds:
	\begin{enumerate}
		\item \label{TLKBFS_1i}
		$p\in (1, \infty)$,
		\item \label{TLKBFS_1ii}
		$p=\infty$ and $\lVert t^{- \frac{1}{q}} b(t) \chi_{(0, 1)}(t) \rVert_q < \infty$,
		\item \label{TLKBFS_1iii}
		$p=1$, $q = 1$, and $b$ is equivalent to a non-increasing function.
	\end{enumerate}
\end{theorem}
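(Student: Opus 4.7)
I would handle the three listed cases one by one, each as a direct consequence of earlier results. For case \ref{TLKBFS_1i}, Corollary~\ref{CEQN} already gives that $\lVert \cdot \rVert_{p,q,b}$ is equivalent to the r.i.~Banach function norm $\lVert \cdot \rVert_{(p,q,b)}$. For case \ref{TLKBFS_1ii}, Corollary~\ref{CEQNa} (applied with $p=\infty>1$) gives $\lVert \cdot \rVert_{\infty,q,b}\approx \lVert \cdot \rVert_{(\infty,q,b)}$, and condition~(3) of Proposition~\ref{LN} is exactly the assumption on $b$, so $\lVert \cdot \rVert_{(\infty,q,b)}$ is an r.i.~Banach function norm. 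For case \ref{TLKBFS_1iii}, let $\bar b$ be non-increasing with $b\approx\bar b$ and set $\varphi(t)=\int_0^t \bar b(s)\,ds$; then $\varphi$ is concave and non-decreasing, and $\lVert f\rVert_{1,1,b}\approx \int_0^\infty \bar b(t) f^*(t)\,dt=\int_0^\infty f^*\,d\varphi=\lVert f\rVert_{\Lambda_\varphi}$, which is an r.i.~Banach function norm by the discussion in Section~\ref{SSFF}.

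\textbf{Necessity.} Assume $\lVert \cdot \rVert_{p,q,b}$ is equivalent to a Banach function norm. By the remark following Proposition~\ref{PAS}, this forces $p\geq 1$: if $p<1$, the associate space is trivial and so the associate functional cannot be a Banach function norm, ruling out equivalence via Theorem~\ref{TFA}. Next I rule out $q\in(0,1)$ when $p\geq 1$. For $p\in(1,\infty)$, Theorem~\ref{T2AS}(1) gives $(L^{p,q,b})'=L^{(p',\infty,b^{-1})}$, and Theorem~\ref{T3AS} then gives $(L^{p,q,b})''=L^{p,1,b}$; by Theorem~\ref{TFA} this would force $\lVert \cdot \rVert_{p,q,b}\approx \lVert \cdot \rVert_{p,1,b}$, contradicting the proper inclusion in Proposition~\ref{PELK}. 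The cases $p=\infty$ and $p=1$ with $q<1$ are handled analogously with the relevant parts of Theorems~\ref{T2AS} and \ref{T3AS} (and, in the $p=1$ case, by first approximating $b$ by an appropriate majorant if needed, or by a direct embedding argument comparing $L^{1,q,b}$ with $L^{1,1,b}$).

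To rule out $p=1$ with $q\in(1,\infty]$, I would use Theorem~\ref{TAS} (or Theorem~\ref{T3AS} for $q=\infty$) twice: first to compute $(L^{1,q,b})'=L^{(\infty,q',b^{-1})}$, which by Corollary~\ref{CEQNa} coincides with $L^{\infty,q',b^{-1}}$, and then once more to get $(L^{1,q,b})''=L^{(1,q,b)}$. By Theorem~\ref{TFA}, equivalence to a Banach function norm would force $\lVert \cdot \rVert_{1,q,b}\approx \lVert \cdot \rVert_{(1,q,b)}$, but Corollary~\ref{CEQNa} asserts that this happens only when $p>1$, contradiction.

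The remaining case, and the one I expect to be the main obstacle, is $p=q=1$ with $b$ \emph{not} equivalent to a non-increasing function: here I would compute $(L^{1,1,b})''$ directly, using classical duality for $\Lambda^1$-type Lorentz spaces (the relevant identification expresses $(L^{1,1,b})'$ as a Marcinkiewicz space built from the ``level function'' of $b$, whose associate in turn gives back a weighted $\Lambda^1$ space with a non-increasing weight). Since the double associate is equivalent to $L^{1,1,\bar b}$ for some non-increasing $\bar b$, Theorem~\ref{TFA} together with Theorem~\ref{TELK} would force $b\approx\bar b$, contradicting our assumption. This step is where the previously developed machinery does not apply verbatim, so the argument needs the finer duality for classical Lorentz spaces referenced in the section's opening paragraph (\cite{CarroSoria96}, \cite{CarroRaposo07}, \cite{Sawyer90}, \cite{GogatishviliSoudsky14}).
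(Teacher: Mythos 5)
Your sufficiency argument is essentially the paper's for cases \ref{TLKBFS_1i} and \ref{TLKBFS_1ii} (the paper invokes Corollary~\ref{CEQN} for both), and your direct identification of $L^{1,1,b}$ with the Lorentz endpoint space $\Lambda_\varphi$, $\varphi(t)=\int_0^t\bar b$, is a clean variant of the paper's route for \ref{TLKBFS_1iii}. On the necessity side, your treatment of $p<1$ matches the paper, and your double-associate argument for $p\in(1,\infty)$, $q\in(0,1)$ --- compute $(L^{p,q,b})''=L^{p,1,b}$ via Theorems~\ref{T2AS} and \ref{T3AS}, then contradict the properness of the embedding in Proposition~\ref{PELK} via Theorem~\ref{TFA} --- is correct and genuinely different from the paper, which disposes of all of $q\in(0,1)$ at once by citing \cite[Theorem~2.5.8]{CarroRaposo07}.

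However, the remaining necessity cases have real gaps. First, for $q\in(0,1)$ your machinery does not extend ``analogously'': Theorem~\ref{T2AS} gives the associate space for $p=\infty$ only when $q=1$, and for $p=1$ only when $b$ is equivalent to a non-increasing function, so the cases $p=\infty$, $q<1$ and $p=1$, $q<1$ with non-monotone $b$ are simply not covered, and neither ``approximating $b$ by a majorant'' nor ``a direct embedding argument'' is actually an argument. Second, and more seriously, for $p=1$, $q\in(1,\infty)$ your second application of Theorem~\ref{TAS} is illegitimate: to compute $(L^{(\infty,q',b^{-1})})'$ you would need the hypotheses of case \ref{TASii} of that theorem for the parameters $(\infty,q',b^{-1})$, and the supremum condition there fails already for $b\equiv 1$ (where $\int_0^t s^{-1}b^{-q'}(s)\,ds=\infty$); indeed for $b\equiv 1$ Proposition~\ref{PP4} gives $L^{(\infty,q',b^{-1})}=\{0\}$, so $(L^{1,q,b})''$ is certainly not $L^{(1,q,b)}$ and your claimed identity is false. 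The paper avoids this entirely by combining Corollary~\ref{CEQNa} with the normability criteria of \cite[Theorem~4]{Sawyer90} and \cite[Theorem~5.1]{GogatishviliSoudsky14}, which state that normability of the $\Lambda$-type space is equivalent to its coincidence with the $\Gamma$-type space, i.e.\ to $L^{1,q,b}=L^{(1,q,b)}$, which Corollary~\ref{CEQNa} rules out. Finally, for $p=q=1$ your level-function sketch is not carried out and is heavier than needed: the paper applies \cite[Theorem~2.3]{CarroSoria96} together with \ref{LEFF} to extract the necessary condition $b(r)\leq Cb(s)$ for all $0<s\leq r$, which is precisely equivalence of $b$ to the non-increasing function $s\mapsto\sup_{t\in[s,\infty)}b(t)$, giving the contradiction directly.
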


\begin{proof}
	As for the sufficiency, parts~\ref{TLKBFS_1i} and \ref{TLKBFS_1ii} constitute the content of Corollary~\ref{CEQN} while the part~\ref{TLKBFS_1iii} follows the characterisation of normability of the classical Lorentz space $\Lambda^1$ obtained in \cite[Theorem~2.3]{CarroSoria96} and \ref{LEFF}. Alternatively, one can prove that \ref{TLKBFS_1iii} is sufficient by using Theorem~\ref{T3AS}, Corollary~\ref{CP5b}, Theorem~\ref{TFA}, and Theorem~\ref{TELK}.
	
	Let us now turn our attention to the necessity of those conditions. The case when $q \in (0,1)$ follows from \cite[Theorem~2.5.8]{CarroRaposo07}, the case when $p \in (0,1)$ from Proposition~\ref{PAS} (or Corollary~\ref{CP5b}), and the case when $p=\infty$ and $\lVert t^{- \frac{1}{q}} b(t) \chi_{(0, 1)}(t) \rVert_q = \infty$ from Proposition~\ref{LN}. Moreover, to cover all the cases when $p=1$ and $q \in (1, \infty]$ one only has to combine Corollary~\ref{CEQNa} with \cite[Theorem~4]{Sawyer90} (for $q \in (1, \infty)$) and \cite[Theorem~5.1]{GogatishviliSoudsky14} (for $q=\infty$).
	
	Finally, to obtain the necessity for the case when $p=q=1$ we turn again to \cite[Theorem~2.3]{CarroSoria96} which, when combined with \ref{LEFF}, provides as a necessary condition for normability that there must exist a constant $C > 0$ such that it holds for all $0 < s \leq r < \infty$ that
	\begin{equation} \label{TLKBFS1}
		b(r) \leq C b(s).
	\end{equation}
	We now assert that $b$ is not equivalent to any non-increasing function, as a special case of which we get that $b$ is not equivalent to $B$ defined for $s \in (0, \infty)$ by
	\begin{equation*}
		B(s) = \sup_{t \in [s, \infty)} b(t).
	\end{equation*}
	This property together with the fact that $B \geq b$ on $(0,\infty)$ allows us to find, for any given $C>0$, the required pair of numbers $r, s$ that violates \eqref{TLKBFS1}.
\end{proof}

In the second theorem of this section we provide a full characterisation for the spaces $L^{(p,q,b)}$. We treat only the case when $\mu(R) = \infty$, because in the case $\mu(R) < \infty$ the situation gets rather messy, as there are several competing definitions of those spaces and the properties depend on the version chosen.

\begin{theorem} \label{TLKBFS2}
	Let $p,q,b$ be as in Definition~\ref{DLK} and assume that $\mu(R) = \infty$. The space $L^{(p,q,b)}$ can be equipped with a Banach function norm equivalent to $\lVert \cdot \rVert_{(p, q, b)}$ if and only if $q \in [1, \infty]$ and one of the following conditions holds:
	\begin{enumerate}
		\item \label{TLKBFS2_1i}
		$p\in (1, \infty)$,
		\item \label{TLKBFS2_1ii}
		$p= 1$ and $\lVert t^{- \frac{1}{q}} b(t) \chi_{(1, \infty)}(t) \rVert_q < \infty$,
		\item \label{TLKBFS2_1iii}
		$p= \infty$ and $\lVert t^{- \frac{1}{q}} b(t) \chi_{(0, 1)}(t) \rVert_q < \infty$.
	\end{enumerate}

	Moreover, if those conditions are satisfied then $\lVert \cdot \rVert_{(p, q, b)}$ is itself a Banach function norm.
\end{theorem}

\begin{proof}
	The sufficiency of the presented condition for $\lVert \cdot \rVert_{(p, q, b)}$ being a Banach function norm has already been established in Proposition~\ref{LN}. Furthermore, it follows from the same Proposition that if none of the conditions \ref{TLKBFS2_1i}, \ref{TLKBFS2_1ii}, and \ref{TLKBFS2_1iii} is satisfied, then the space is trivial and thus its defining functional cannot be equivalent to any Banach function norm. Thus it remains only to prove the necessity of $q \in [1, \infty]$.
	
	The case when $p>1$ follows from Theorem~\ref{TLKBFS}, since we know from Corollary~\ref{CEQNa} that under this assumption $L^{(p,q,b)} = L^{p,q,b}$ with equivalent quasinorms. Thence it remains to consider only the case $p=1$. In this case, we know that $L^{(1,q,b)} = \Gamma^q(v)$, where $\Gamma^q(v)$ is the classical Lorentz space given defined by the weight $v$ which is given by
	\begin{align*}
		v(t) &= t^{q-1} b^{q}(t) &\textup{for } t \in (0, \infty).
	\end{align*}
	It now follows from \cite[Theorem~1]{Soudsky16} that a necessary condition for $\Gamma^q(v)$ with $q<1$ to be normable is that $v$ is integrable on $(0, \infty)$. Since this is not true, as follows from \ref{SV4}, we conclude that $\lVert \cdot \rVert_{(p, q, b)}$ cannot be equivalent to a norm, much less to a Banach function norm.
\end{proof}

\bibliographystyle{dabbrv}
\bibliography{bibliography}

\end{document}